\newtheorem{thm}{Theorem}
\newtheorem{cor}{Corollary}
\newtheorem{dfn}{Definition}
\newtheorem{lemma}{Lemma}
\newtheorem{prop}{Proposition}
\newtheorem{asmptn}{Assumption}
\newtheorem{claim}{Claim}
\newtheorem{watanabe}{Watanabe's Theorem}
\newtheorem{sugitani}{Sugitani's Theorem}
\theoremstyle{remark}
\newtheorem{remark}{Remark}
\newcommand{\zz}[1]{\mathbb{#1}}
\newcommand{\la}{\langle}
\newcommand{\ra}{\rangle}
\newcommand{\ip}[1]{\langle #1 \rangle}
\begin{document}

\title[BRW Local Time]{Spatial Epidemics and Local Times for Critical Branching Random
Walks in Dimensions 2 and 3}
\author{Steven P. Lalley}
\address{Department of Statistics, The University of Chicago, Chicago, IL
60637} \email{lalley@galton.uchicago.edu}

\author{Xinghua Zheng}
\address{Department of Statistics, The University of Chicago,
Chicago, IL 60637} \curraddr{Department of Mathematics, The
University of British Columbia\\ Room 121, 1984 Mathematics Road
Vancouver, B.C., Canada V6T 1Z2} \email{xhzheng@math.ubc.ca}

\date{\today}

\maketitle

\renewcommand{\baselinestretch}{1.0}
\normalsize

\begin{abstract}
The behavior at criticality of spatial \emph{SIR}
(susceptible/infected/recovered) epidemic models in dimensions two
and three is investigated. In these models, finite populations of
size $N$ are situated at the vertices of the integer lattice, and
infectious contacts are limited to individuals at the same or at
neighboring sites. Susceptible individuals, once infected, remain
contagious for one unit of time and then recover, after which they
are immune to further infection.  It is shown that the
measure-valued processes associated with these epidemics, suitably
scaled, converge, in the large-$N$ limit, either to a standard
Dawson-Watanabe process (super-Brownian motion) or to a
Dawson-Watanabe process with location-dependent killing, depending
on the size of the the initially infected set. A key element of
the argument is a proof of Adler's 1993 conjecture that the local
time processes associated with branching random walks converge to
the local time density process associated with the limiting
super-Brownian motion.

\end{abstract}

\section{Introduction}
\subsection{Spatial \emph{SIR} epidemics }
Simple spatial models of epidemics are known to exhibit \emph{critical
thresholds} in one dimension: Roughly, when the density of the
initially infected set exceeds a certain level, the epidemic evolves
in a markedly different fashion than its \emph{branching
envelope}. See \cite{lalley07} for a precise statement, and
\cite{Aldous97}, \cite{Martinlof98}, and \cite{LD06} for analogous
results in the simpler setting of mean-field models.  The main purpose of
this article is to show that spatial \emph{SIR} epidemics (\emph{SIR}
stands for \emph{susceptible/infected/recovered}) in dimensions two
and three also exhibit critical thresholds.

The epidemic models studied here take place in populations of size $N$
located at the sites of the integer lattice $\zz{Z}^{d}$ in $d$
dimensions.  Each of the $N$ individuals at a site $x\in \zz{Z}^{d}$
may at any time be either susceptible, infected, or
recovered. Infected individuals remain infected for one unit of time,
and then recover, after which they are immune to further infection.
The rules of infections are as follows: at each time $t=0,1,2,\ldots,$
for each pair $(i_x, s_y)$ of an infected individual located at $x$
and a susceptible individual at $y$, $i_x$ infects $s_y$ with
probability $p_N(x,y).$ We shall only consider the case where the
transmission probabilities $p_N(x, y)$ are spatially homogeneous,
nearest-neighbor, and symmetric, and scale with the village size $N$
in such a way that the expected number of infections by a contagious
individual in an otherwise healthy population is 1 (so that the
epidemic is critical), that is,
\begin{asmptn}
$p_N(x; y) = 1/[(2d+1)N]$ if $|y- x|\leq  1$; and = 0 otherwise.
\end{asmptn}

Our main result, Theorem~\ref{thm:SIR} below, asserts that under
suitable hypotheses on the initial configurations of infected
individuals, the critical spatial \emph{SIR}$-d$ epidemic can be
rescaled so as to converge to a Dawson-Watanabe measure-valued
diffusion in both $d=2$ and $d=3$. Depending on the size of the
initially infected set, the limiting Dawson-Watanabe process has
either a positive killing rate or no killing at all.  The
analogous result for $d=1$ was proved in \cite{lalley07}, using
the fact that one-dimensional super-Brownian motion (the
Dawson-Watanabe process with no killing) has sample paths in the
space of absolutely continuous measures. In higher dimensions this
is no longer true, so a different strategy is needed.

\subsection{Branching envelope of a spatial epidemic} \label{ssec:bre}
The spatial \emph{SIR} epidemic in $d$ dimensions is naturally
coupled with a nearest neighbor branching random walk on the
integer lattice $\zz{Z}^d$; this branching random walk is often
referred to as the \emph{branching envelope} of the epidemic.
Particles of this branching random walk represent \emph{infection
attempts} in the coupled epidemic, some of which may fail to be
realized in the epidemic because the targets of the attempts are
either recovered or are targets of other simultaneous infection
attempts. The branching envelope evolves as follows: Any particle
located at site $x$ at time $t$ lives for one unit of time and
then reproduces, placing random numbers $\xi_y$ of offspring at
the sites $y$ such that $|y- x|\leq 1$. The random variables
$\xi_y$ are i.i.d., with Binomial$(N,1/[(2d+1)N])$ distributions.
Denote this reproduction rule by $\mathcal{R}_N$, and denote by
$\mathcal{R}_\infty$ the corresponding offspring law in which the
Binomial distribution is replaced by the Poisson distribution with
mean $1/(2d+1)$. Since offspring are placed independently at each
of the $2d + 1$ nearest neighbors, the expected total number of
offspring of a particle is 1, \hbox{i.e.} the branching random
walk is critical. Moreover, under either reproduction rule
$\mathcal{R}_{N}$ or $\mathcal{R}_{\infty}$, particle motion is
governed by the law of the simple nearest neighbor random
walk\footnote{Throughout the paper, the term \emph{simple random
walk} will mean simple random walk with holding probability $1/
(2d+1)$.} on $\zz{Z}^{d}$ (with holding probability $1/ (2d+1)$):
In particular, given that a particle at site $x$ has $k$
offspring, each of these offspring independently chooses a
neighboring site $y$ according to the law
\begin{equation}\label{eq:srw}
    P_{1} (x,y)= 1/ (2d+1) \quad \text{for} \quad  |y-x|\leq 1.
\end{equation}
Note that the covariance matrix of the increment has determinant
$\sigma^{2d}$, where  $\sigma^{2}$ is the \emph{variance
parameter} of the jump distribution, defined by
\begin{equation}\label{eq:sigma}
    \sigma^2:= \left(\frac{2}{2d+1} \right).
\end{equation}

The spatial \emph{SIR}-$d$ epidemic can be constructed together
with its branching envelope on a common probability space in such
a way that the branching envelope dominates the epidemic, that is,
for each time $n$ and each site $x$ the number of infected
individuals at site $x$ at time $n$ is no larger than the number
of particles in the branching envelope.  The construction, in
brief, is as follows (see \cite{lalley07}): Particles of the
branching random walk will be colored either \emph{red} or
\emph{blue} according to whether or not they represent infections
that actually take place, with red particles representing actual
infections. Initially, all particles are red.  At each time
$t=0,1,2,\dotsc$, particles produce offspring at the same or
neighboring sites according to the law $\mathcal{R}_{N}$ described
above. Offspring of blue particles are always blue, but offspring
of red particles may be either red or blue, with the choices made
according to the following procedure: All offspring of red
particles at a location $y$ choose numbers $j\in [N]:=\{1,2,\dotsc
,N \}$ at random, independently of all other particles.  If a
particle chooses a number $j$ that was previously chosen by a
particle of an earlier generation at the same site $y$, then it is
assigned color blue. If $k>1$ offspring of red particles choose
the same number $j$ at the same time, and if $j$ was not chosen in
an earlier generation, then $1$ of the particles is assigned color
red, while the remaining $k-1$ are assigned color blue. Under this
rule, the subpopulation of red particles evolves as an
\emph{SIR}$-d$ epidemic.

It is apparent that when the numbers of infected and recovered
individuals at a site and its nearest neighbors are small compared
to $N$, then blue particles will be produced only infrequently,
and so the epidemic process will closely track its branching
envelope. Only when the sizes of the recovered and infected sets
reach certain critical thresholds will blue particles start to be
produced in large numbers, at which point the epidemic will begin
to diverge significantly from the branching envelope. Our main
result, Theorem~\ref{thm:SIR}  below, implies that the critical
threshold for the number of initially infected individuals is on
the order $N^{1/ (3-d/2)}$.

The \emph{SIR}-$d$ epidemic is related to its branching envelope
in a second --- and for our purposes more important -- way. The
law of the epidemic, as a probability measure on the space of
possible population trajectories, is absolutely continuous
relative to the law of its branching envelope. The likelihood
ratio can be expressed as a product over time and space, with each
site/neighbor/generation contributing a factor (see
\S~\ref{ssec:convergenceLR} below). Each such factor involves the
\emph{total occupation time} $R_{n}^{N} (x)$ of the site, that is,
the sum of the number of particles at site $x$ over all times
prior to $n$. Thus, the asymptotic behavior of the occupation time
statistics for branching random walks will play a central role in
the analysis of the large-$N$ behavior of the \emph{SIR}-$d$
epidemic.

\subsection{Watanabe's Theorem}\label{ssec:watanabe} A fundamental
theorem of \cite{watanabe68} asserts that, under suitable rescaling
(the Feller scaling) the measure-valued processes naturally associated
with critical branching random walks converge to a limit, the standard
Dawson-Watanabe process, also known as super-Brownian motion.

\begin{dfn} The Feller-Watanabe scaling operator $\mathcal{F}_k$
scales mass by $1/k$ and space by $1/\sqrt{k}$, that is, for any
finite Borel measure $\mu(dx)$ on $\zz{R}^d$ and any test function
$\psi \in C_{c}^{\infty } (\zz{R}^{d})$,
\begin{equation}\label{eqn:feller_scl}
    \la \psi,\mathcal{F}_k \mu \ra = k^{-1}\int \psi(\sqrt{k}x)\mu(dx).
\end{equation}
\end{dfn}

\begin{watanabe} Fix $N$, and for each $k = 1, 2,
\ldots,$ let $X^k_t$ be a branching random walk with offspring
distribution $\mathcal{R}_N$ and initial particle configuration
$X^k_0$. (In particular, $X^k_t (x)$ denotes the number of
particles at site $x\in\zz{Z}^d$ in generation $[t]$, and $X^k_t$
is the corresponding counting measure.) If the initial
mass distributions converge, after rescaling, as $k\to \infty$,
that is, if
\begin{equation}\label{ass:mu_conv}
    \mathcal{F}_k X^k_0 \Rightarrow \mu =X_0
\end{equation}
 for some finite Borel measure
$\mu $ on $\zz{R}^d$, then the rescaled measure-valued processes
$(\mathcal{F}_k X^k)_{kt}$ converge in law as $k\to\infty$:
$$
  (\mathcal{F}_k X^k)_{kt} \Rightarrow X_t,
$$
where $\Rightarrow$ represents the weak convergence relative to
the Skorokhod topology on\\ $D([0,\infty);M_F(\zz{R}^d))$. The
limit is the standard Dawson-Watanabe process $X_t$
(super-Brownian motion) with variance parameter $\sigma ^{2}$
(equivalently, standard super-Brownian motion run at speed
$\sigma$).
\end{watanabe}

 See \cite{etheridge} for
an in-depth study of the Dawson-Watanabe process and a detailed proof
of Watanabe's Theorem. Because the process $X_{t}$ has continuous
sample paths in the space of finite Borel measures, it follows
routinely from Watanabe's theorem that the occupation measures for
branching random walks converge to those of super-Brownian motion:

\begin{lemma}\label{lemma:jtconv_sbm_lt} The following joint
convergence holds:
\begin{equation}\label{eq:weakConvergence}
\left((\mathcal{F}_k X^k)_{kt}, \left(\int_0^t (\mathcal{F}_k X^k)_{ks}\, ds\right)\right)
   \Rightarrow \left(X_t,\int_0^t X_s \,ds\right),
\end{equation}
where $\Rightarrow$ represents  weak convergence relative to the
Skorokhod topology on $D([0,\infty);M_F(\zz{R}^d))^2$.
\end{lemma}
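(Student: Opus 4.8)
The plan is to derive the joint convergence from Watanabe's Theorem by showing that the map sending a measure-valued path $\omega \in D([0,\infty);M_F(\zz{R}^d))$ to the pair $\bigl(\omega, \int_0^{\cdot} \omega_s\,ds\bigr)$ is continuous at almost every limit path, and then invoking the continuous mapping theorem. First I would record that, since the standard Dawson--Watanabe process $X_t$ has (almost surely) sample paths that are continuous in $M_F(\zz{R}^d)$ with the weak topology, the integrated measure $t\mapsto\int_0^t X_s\,ds$ is automatically continuous, so the candidate limit lives in $C([0,\infty);M_F(\zz{R}^d))\subset D([0,\infty);M_F(\zz{R}^d))$. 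Thus it suffices to work at paths $\omega$ lying in the subset $C([0,\infty);M_F(\zz{R}^d))$ of continuous paths, and the limit $X$ charges this subset with probability one.

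Next I would verify the continuity of the functional $\Phi\colon\omega\mapsto\bigl(\omega,t\mapsto\int_0^t\omega_s\,ds\bigr)$ on $C([0,\infty);M_F(\zz{R}^d))$ (in the Skorokhod topology on $D([0,\infty);M_F(\zz{R}^d))^2$, but restricted to continuous paths this is just locally uniform convergence). If $\omega^k\to\omega$ uniformly on compact time intervals, then for each fixed test function $\psi\in C_c^\infty(\zz{R}^d)$ and each $T>0$ we have $\sup_{t\le T}\bigl|\la\psi,\omega^k_t\ra-\la\psi,\omega_t\ra\bigr|\to 0$; integrating in time gives $\sup_{t\le T}\bigl|\int_0^t\la\psi,\omega^k_s\ra\,ds-\int_0^t\la\psi,\omega_s\ra\,ds\bigr|\to 0$, which is exactly the statement that $\int_0^{\cdot}\omega^k_s\,ds\to\int_0^{\cdot}\omega_s\,ds$ uniformly on $[0,T]$ in $M_F(\zz{R}^d)$. (A mild point to check here is that the total masses $\omega^k_t(\zz{R}^d)$ stay bounded on $[0,T]$, so that the integrated measures are genuinely finite; this follows because $\omega^k_T(\zz{R}^d)\to\omega_T(\zz{R}^d)$ and the total-mass process of a Dawson--Watanabe process is a nonnegative continuous martingale hence a.s.\ finite, while along the approximating sequence total mass on $[0,T]$ is controlled by the Skorokhod convergence of $\omega^k$.) Therefore $\Phi$ is continuous at every path of $C([0,\infty);M_F(\zz{R}^d))$.

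Finally, by Watanabe's Theorem $(\mathcal{F}_k X^k)_{kt}\Rightarrow X_t$ in $D([0,\infty);M_F(\zz{R}^d))$, and since the weak limit $X$ concentrates on the continuity set of $\Phi$, the continuous mapping theorem yields
\begin{equation*}
\Phi\bigl((\mathcal{F}_k X^k)_{k\cdot}\bigr)\Rightarrow \Phi(X),
\end{equation*}
which is precisely \eqref{eq:weakConvergence}. One should also note that $\int_0^t(\mathcal{F}_k X^k)_{ks}\,ds=\mathcal{F}_k\bigl(\int_0^t X^k_{ks}\,ds\bigr)$ by linearity of the Feller--Watanabe scaling operator in \eqref{eqn:feller_scl}, so the left-hand side of \eqref{eq:weakConvergence} is indeed obtained by applying $\Phi$ to the rescaled branching-random-walk path. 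The only real subtlety---and hence the step I would treat most carefully---is the tightness/finiteness control on total mass needed to guarantee that the integrated measures stay in $M_F(\zz{R}^d)$ and that convergence holds in that space rather than merely vaguely; everything else is a routine application of the continuous mapping theorem once the continuity of $\Phi$ on continuous paths is established.
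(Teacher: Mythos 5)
Your proposal is correct and follows essentially the same route as the paper's proof: continuity of the limit process's sample paths, continuity of the functional $\omega\mapsto(\omega,\int_0^{\cdot}\omega_s\,ds)$ on the subspace of continuous paths, and the continuous mapping theorem applied to Watanabe's Theorem. The extra care you take with total-mass control is a reasonable elaboration but not a departure from the paper's argument.
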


\begin{proof}The Dawson-Watanabe process $X_t$ has continuous sample paths in
$D([0,\infty);M_F(\zz{R}^d))$, see, e.g., Proposition 2.15 in
\cite{etheridge}. The functional $(X_t)\mapsto (\int_0^t X_s\,
ds)$ is continuous  relative to the Skorokhod topology on the
subspace of continuous measure-valued processes, so the result
follows from Watanabe's theorem and the continuous mapping
principle.
\end{proof}

\subsection{Local times of critical branching random walks}\label{ssec:lts}

In dimension $d = 1$ the super-Brownian motion has sample paths in
the space of absolutely continuous measures, that is, for each
$t>0$ the random measure $X_t$ is absolutely continuous relative
to Lebesgue measure (\cite{ks88}, \cite{Reimers89}). Moreover, the
Radon-Nikodym derivative $X(t, x)$ is jointly continuous in $t, x$
(for $t > 0$). It is shown in \cite{lalley07} that if a sequence
of branching random walks satisfy the assumptions in Watanabe's
Theorem, then the density processes associated with those
branching random walks, under some smoothness assumptions on the
initial configurations and after suitable scaling, converge to the
density process of the limiting super-Brownian motion.

In dimensions $d\geq 2$ the measure $X_t$ is almost surely singular
(\cite{DH79}). Therefore, one cannot expect the convergence of density
processes as in \cite{lalley07}. We shall prove, however, that the
\emph{occupation measures} of critical branching random walks have
discrete densities that converge weakly --- see Theorem \ref{thm:lt}
below. The limit process is the \emph{local time} process associated
with the occupation measure
\[
    L_{t}:=\int_{0}^{t} X_{s}\,ds
\]
of the super-Brownian motion. In dimensions $d=2,3$, the random
measure $L_{t}$ is, for each $t>0$, absolutely continuous, despite
the fact that $X_{t}$ is singular --- see \cite{Sugitani89},
\cite{Iscoe86b} and \cite{Fleischmann88}.  Moreover, under
suitable hypotheses on the initial condition $X_{0}$, the density
process $L_{t} (x)$ is jointly continuous for $t>0$ and $x\in
\zz{R}^{d}$: This is the content of \emph{Sugitani's theorem}. For
the reader's convenience, we state Sugitani's Theorem precisely
here. For $t>0$ and $x\in \zz{R}^{d}$, set
\[
    q_t(x)=\int_0^t \phi_s(x)\,ds,
    \quad \text{where} \quad
    \phi_{t} (x) =\frac{e^{-|x|^{2}/2t}}{(2\pi t)^{d/2}}
\]
is the usual heat kernel.

\begin{sugitani} Assume that $d=2$ or $3$, and that the
initial configuration $\mu:=X_0$ of the super-Brownian motion $X_t$
is such that  the convolution
\begin{equation}
  \label{ass:mu_smooth} ( q_t * \mu )(x)\text{ is jointly continuous in
  } t\geq 0 \text{ and }x\in\zz{R}^d.
\end{equation}
Then for each $t\geq 0$ the occupation measure $L_{t}$ is absolutely
continuous, and there is a jointly continuous version $L_{t} (x)$ of
the density process.
\end{sugitani}

We call $\left(L_t(x)\right)_{t\geq 0,\ x\in\zz{R}^d}$ the
\emph{local time density process} associated with the
super-Brownian motion. In view of Watanabe's and Sugitani's
theorems, it is natural to conjecture (see
Remark~\ref{remark:adler} below) that the local time density
processes of branching random walks, suitably scaled, converge to
the local time density process of the super-Brownian motion.
Theorem \ref{thm:lt} below asserts that this conjecture is true.
Let $X^{k}$ be a sequence of branching random walks on
$\zz{Z}^{d}$. Write
\begin{align}
\label{eq:Rn}
     X^k_i(x) :&=\# \text{ particles at }\; x \; \text{at time} \; i, \quad \text{and}\\
 \notag   R^k_n(x):&=\sum_{ i< n} X^k_i(x).
\end{align}
(We use the notation $R^{k}_{n}$ instead of $L^{k}_{n}$ because in
the corresponding spatial epidemic model, the quantity $R^{k}_{n}
(x)$ represents the number of \emph{recovered} individuals at site
$x$ and time $n$.) Denote by
\begin{equation}\label{eq:Pn}
    \zz{P}^{n} = (P_{n} (x,y))_{x,y\in \zz{Z}^{d}}= (P_{n} (y-x))_{x,y\in \zz{Z}^{d}}
\end{equation}
the transition probability kernel of the simple random walk on
$\zz{Z}^{d}$, that is, $\zz{P}^{n}=\zz{P}*\zz{P}^{n-1}$ is the
$n$th convolution power of the one-step transition probability
kernel  given by \eqref{eq:srw}. Let $G_{n} (x,y)$ be the
associated Green's function:
\[
    G_n(x):=\sum_{ i< n} P_i(x) .
\]
For any finite measure $\mu$ on $\zz{Z}^{d}$ with finite support,
set
\[
    (\mu G_n)(x):= (\mu * G_{n}) (x)=\sum_y \mu(y) G_n(x-y),
\]
and denote by $\mu G_{t} (y)$ the continuous extension to $[0,\infty )\times
\zz{R}^{d}$ by linear interpolation.

\begin{thm}\label{thm:lt} Assume that $d=2$ or $d=3$. For each $k = 1,
2, \ldots,$ let $X^k_t$ be a branching random walk whose offspring
distribution is Poisson with mean 1. Assume that the initial configurations
$\mu^k:=X^k_0$ satisfy hypothesis \eqref{ass:mu_conv} of Watanabe's
theorem, where the limit measure $\mu$
 has compact support  and satisfies the hypothesis
\eqref{ass:mu_smooth}  of Sugitani's theorem.  Assume further that
\begin{equation}\label{ass:lt_smooth}
    \frac{\mu^kG_{kt} (\sqrt{k}x)}{k^{2-d/2}}
    \Longrightarrow  [(q_{\sigma^2 t} *\mu)/\sigma^2] (x),
\end{equation}
where $\Rightarrow $ indicates weak convergence in the topology of
$D([0,\infty),C_b(\zz{R}^d))$.  Then as $k\to\infty$,
\begin{equation}\label{eq:lt}
    \frac{R^k_{kt}(\sqrt{k}x)}{k^{2-d/2} }
    \Longrightarrow
    L_{t} (x),
\end{equation}
where  $L_{t} (x)$ is the local time density process
 associated with the super-Brownian motion with variance parameter $\sigma ^{2}$ started in the initial
configuration $X_{0}=\mu$.
\end{thm}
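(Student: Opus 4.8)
The plan is to recognize $R^k_{kt}(\sqrt k\,x)/k^{2-d/2}$ as the density of the rescaled occupation measure $L^k_t:=\int_0^t(\mathcal F_k X^k)_{ks}\,ds$, to show that these densities form a tight family of continuous functions, and then to identify every subsequential limit with the Sugitani local-time density of $L_t:=\int_0^t X_s\,ds$, using the measure-level convergence supplied by Lemma~\ref{lemma:jtconv_sbm_lt}. Concretely, set $\ell^k_t(y/\sqrt k):=R^k_{kt}(y)/k^{2-d/2}$ for $y\in\zz{Z}^d$ and extend $\ell^k_t$ to $\zz{R}^d$ by multilinear interpolation, exactly as $\mu G_t$ is extended in the statement. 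A change of variables and a Riemann-sum estimate give, for every $\psi\in C_c^\infty(\zz{R}^d)$, $\langle\psi,L^k_t\rangle=\int_{\zz{R}^d}\psi(x)\,\ell^k_t(x)\,dx+o(1)$, where the error accounts only for the interpolation and for rounding $kt$ and $ks$ to integers; thus $\ell^k_t$ is, up to negligible errors, a genuine density for $L^k_t$. By Lemma~\ref{lemma:jtconv_sbm_lt}, $\big((\mathcal F_kX^k),L^k\big)\Rightarrow(X,L)$ jointly, and, since hypothesis~\eqref{ass:mu_smooth} is assumed, Sugitani's theorem endows $L_t$ with an a.s. jointly continuous density $L_t(x)$. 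It remains to upgrade the weak convergence of the measures $L^k_t$ to weak convergence of their densities $\ell^k_t$.

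The hard part is a uniform spatial regularity estimate for the $\ell^k_t$: there is an exponent $\gamma=\gamma(d)>0$ such that for every even integer $p$ and every $T<\infty$,
\begin{equation*}
  \sup_k\ \sup_{0\le t\le T}\ E\big|\ell^k_t(x)-\ell^k_t(x')\big|^{p}\ \le\ C(p,T)\,|x-x'|^{\gamma p}\qquad(x,x'\in\zz{R}^d),
\end{equation*}
together with the one-point bound $\sup_k\sup_{t\le T}E[\ell^k_t(0)]<\infty$, which is immediate from \eqref{ass:lt_smooth} and the first-moment identity $E[R^k_{kt}(z)]=(\mu^kG_{kt})(z)$. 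To prove the increment bound I would expand the $p$-point function $E\big[\prod_{i=1}^p R^k_{kt}(w_i)\big]$, $w_i\in\zz{Z}^d$, through the branching genealogy: because the offspring law is Poisson with mean $1$, all factorial moments of the offspring number equal $1$, so the many-to-few expansion bounds this $p$-point function by a finite sum over reduced trees with $p$ labelled leaves, each summand a product over the tree edges of lattice transition kernels, summed over the positions and generations of the branch vertices and over the root position weighted by $\mu^k$; summing the leaf generations below $kt$ replaces these kernels by lattice Green's functions $G_{kt}$. Setting each $w_i\in\{\sqrt k\,x,\sqrt k\,x'\}$ and expanding by multilinearity introduces at every leaf a Green's-function increment $G_{kt}(\sqrt k\,x-u)-G_{kt}(\sqrt k\,x'-u)$; estimating these, and their one-step lattice versions (which handle the range $|x-x'|<1/\sqrt k$ through the interpolation), by local-CLT bounds for $G_m$ and for its discrete gradient, and then integrating over the branch variables, yields the stated bound. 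After the Feller scaling $G_{kt}(\sqrt k\,\cdot)$ converges, suitably normalized, to $q_{\sigma^2 t}/\sigma^2$, whose diagonal singularity is $|z|^{2-d}$ for $d=3$ and $\log(1/|z|)$ for $d=2$; it is the local integrability of (the square of) these kernels --- equivalently, the finiteness of the analogous continuum local-time moments --- that forces $d\le 3$, and the dimension restriction enters through exactly these integrability properties.

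Granting the regularity estimate, the rest is routine. Kolmogorov--Chentsov converts the moment bound into equicontinuity of $\{x\mapsto\ell^k_t(x)\}$ uniform in $k$ and in $t\le T$; together with the a priori range bound --- a branching random walk started from a compactly supported initial mass remains, up to generation $kT$, within a ball of radius $O(\sqrt{kT})$, so the $\ell^k_t$ are supported in a fixed ball --- this gives, via Arzel\`a--Ascoli, tightness of $\{\ell^k_t\}$ in $C_b(\zz{R}^d)$ for each $t$. Since $n\mapsto R^k_n(x)$ is nondecreasing and the limiting field $L_t(x)$ is jointly continuous and nondecreasing in $t$, the temporal oscillations are controlled by monotonicity, and one obtains convergence of $\{\ell^k\}$ in $D\big([0,\infty);C_b(\zz{R}^d)\big)$ once the finite-dimensional limits are identified. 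For the latter, let $\hat\ell$ be a subsequential limit along $k_j$, taken jointly with $L^{k_j}\Rightarrow L$ (possible because the $\ell^{k_j}$ are tight); for $\psi\in C_c^\infty$ one has $\int\psi\,\ell^{k_j}_t=\langle\psi,L^{k_j}_t\rangle+o(1)\Rightarrow\langle\psi,L_t\rangle$, and also $\int\psi\,\ell^{k_j}_t\Rightarrow\int\psi\,\hat\ell_t$, whence $\langle\psi,L_t\rangle=\int\psi\,\hat\ell_t$ for every $\psi$; thus $\hat\ell_t(\cdot)$ is a density of $L_t$, and being continuous it must coincide with Sugitani's version, $\hat\ell_t(x)=L_t(x)$. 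The subsequential limit being unique, the whole sequence converges, which is \eqref{eq:lt}. The only genuinely delicate step in this program is the uniform $p$-th moment estimate on the spatial increments of the lattice occupation times --- controlling increments of the lattice Green's function down to the lattice scale and propagating them through the genealogical moment expansion, uniformly in $k$; everything else --- the change of variables, the compact-range bound, Kolmogorov--Chentsov, and the identification of the limit --- is soft.
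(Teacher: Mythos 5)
Your overall architecture coincides with the paper's: reduce the theorem to (a) tightness of the rescaled occupation densities in $D([0,\infty);C_b(\zz{R}^d))$ and (b) identification of every limit point with Sugitani's density $L_t(x)$ via Lemma~\ref{lemma:jtconv_sbm_lt}; the paper carries out (b) essentially exactly as you describe. Where you genuinely diverge is in the engine for the spatial increment estimate. The paper, following Sugitani, never expands $p$-point functions over genealogical trees; it works with the \emph{cumulants} $\kappa_{h,n}$ of $\la R_n,\delta_a-\delta_b\ra$, which for Poisson$(1)$ offspring satisfy the closed recursion \eqref{eqn:nuk_indc_0} coming from the log-Laplace equation \eqref{eqn:nu_indc_s}; this permits a clean induction on the cumulant order $h$ (the Claim, bounds \eqref{eqn:k_gen}--\eqref{eqn:nu_est}) driven by the convolution inequalities of Lemmas~\ref{lemma:lltA}--\ref{lemma:greenA}, and the cumulant bounds are converted into moment bounds by Lemma~\ref{lemma:cum_mom}. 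Your many-to-few expansion is the ``unrolled'' version of the same computation (cumulants are exactly the connected tree contributions), so it requires the same analytic inputs --- the local-CLT bound \eqref{eqn:lclt_bd}, the discrete gradient bound \eqref{p_diff_alpha}, and iterated heat-kernel/Green's-function convolution estimates like \eqref{eqn:conv} --- but carries the additional burden of organizing all reduced trees with $p$ labelled leaves; the recursion is precisely what makes that bookkeeping tractable, so do not underestimate it. The other genuine difference is temporal regularity: the paper reruns the entire cumulant machinery for $R_{n+m}-R_m$ (Part C of the proof, using Lemma~\ref{lemma:greenB}) to get a Kolmogorov--Centsov bound in $t$, whereas you invoke monotonicity of $t\mapsto R^k_{kt}(x)$ together with joint continuity of the limit; this P\'olya-type upgrade from finite-dimensional to locally uniform convergence is legitimate for monotone families and is arguably lighter than Part C, provided you establish \emph{joint} convergence at finitely many times, which your identification step does supply. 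Two small cautions: your claim that the rescaled densities live in a \emph{fixed} ball holds only with high probability, not almost surely (which is still enough for tightness), and the deterministic part of the spatial increment, $(\mu^kG_{kt})(\sqrt{k}x)-(\mu^kG_{kt})(\sqrt{k}x')$, is not controlled by hypothesis \eqref{ass:lt_smooth} alone at a H\"older rate --- you must bound it by the same discrete-gradient estimate \eqref{p_diff_alpha} together with \eqref{eqn:green_bd}, which is exactly the paper's $h=1$ cumulant step.
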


Theorem \ref{thm:lt} will be proved in \S 2.

\begin{remark}
\label{remark:adler} The analogous result for critical branching
Brownian motions was conjectured by \cite{adler93}, who proved the
marginal convergence for any fixed $t$ and $x$.
\end{remark}
\begin{remark}The assumption that the offspring distribution is
Poisson with mean 1 can be relaxed. All that is really needed is that the
offspring distribution has an exponentially decaying tail.  See
Remark~\ref{rmk:thm1_general}.
\end{remark}

\begin{remark}\label{rmk:mu_smooth}
The hypothesis \eqref{ass:mu_conv} does not by itself imply
\eqref{ass:lt_smooth}, even if the limit measure $\mu $ satisfies
the hypothesis \eqref{ass:mu_smooth} of Sugitani's theorem.
Sufficient conditions for \eqref{ass:lt_smooth} are given in
Proposition \ref{prop:mu_smooth} below. In particular, in
dimension 2, if (\ref{ass:mu_conv}) holds and the maximal number
of particles on a single site is bounded in $k$, then
(\ref{ass:lt_smooth}) is satisfied.
\end{remark}

\begin{remark}\label{remark:support}
Let $X_{t}$ be super-Brownian motion in dimension $d=2$.  For each
$t>0$ the random measure $X_{t}$ is singular, so by Fubini's
theorem, for almost every point $x\in \zz{R}^{2}$ the set of times
$t>0$ such that $x$ is a point of density of $X_{t}$ has Lebesgue
measure $0$. Under hypothesis \eqref{ass:lt_smooth} we can make an
analogous quantitative statement for branching random walk: For
any fixed $x\in \zz{Z}^2$ and all $t>0,$
\begin{equation}
\label{eqn:occ_time}
   E\sum_{m=1}^{[kt]} I_{X^k_m(x)> 0} = O(k/\log k).
\end{equation}

\begin{proof}
By Proposition 35 in \cite{lz07}, there exists $\delta>0$ such
that for all $k$ and $m$ sufficiently large,
$$
   E\left[X^k_m(x) | X^k_m(x)>0\right] \geq \delta \log m.
$$
But hypothesis \eqref{ass:lt_smooth} implies that
$$
  ER_{kt}^k(x)=(\mu^kG_{kt})(x)=O(k),
$$
and
$$
  ER_{kt}^k(x)
  =\sum_{m< kt} E X^k_m(x)
  = \sum_{m<kt} E\left[X^k_m(x) | X^k_m(x)>0\right]\cdot P\left[X^k_m(x)>0\right].
$$
\end{proof}
\end{remark}

\subsection{Scaling limit of spatial \emph{SIR} epidemic}
 Before
stating our result, we first recall the definition of
Dawson-Watanabe processes with variable-rate killing. The
Dawson-Watanabe process $X_t$ with killing rate $\theta =
\theta(x, t,\omega)$ (assumed to be progressively measurable and
jointly continuous in $(t, x)$) and variance parameter $\sigma^2$
can be characterized by a martingale problem (\cite{DP99}, \S
6.2): For any test function $\psi\in C_c^2(\zz{R}^d)$,
$$
 \la X_t,\psi\ra - \la X_0,\psi\ra -\frac{\sigma d}{2}\int_0^t\la X_s,\Delta\psi\ra \,ds
  + \int_0^t\la X_s,\theta(\cdot,s)\psi\ra \,ds
$$
is a martingale with the same quadratic variation as for
super-Brownian motion with variance parameter $\sigma^{2}$. The
Dawson-Watanabe process with killing rate $0$ (which we sometimes
refer to as the \emph{standard Dawson-Watanabe process}) is
super-Brownian motion.  Existence and distributional uniqueness of
Dawson-Watanabe processes in general is asserted in \cite{DP99} and
proved, in various cases, in \cite{Dawson78} and \cite{EP95}. It is
also proved in these articles that the law of a Dawson- Watanabe
process with killing on a finite time interval is absolutely
continuous with respect to that of a standard Dawson-Watanabe process
with the same variance parameter, and that the likelihood ratio
(Radon-Nikodym derivative) is (\cite{DP99})
\begin{equation}\label{eq:RN}
    \exp\left\{-\int\theta(t,x)\,dM(t,x) -
                         \frac{1}{2}\int\la X_t,\theta(t,\cdot)^2\ra\, dt\right\},
\end{equation}
where $dM(t, x)$ is the {\it orthogonal martingale measure}
attached to the standard Dawson-Watanabe process (see
\cite{Walsh86}). Absolute continuity implies that sample path
properties are inherited: In particular, when $d=2, 3,$ if $X_t$
is a Dawson-Watanabe process with killing, then almost surely its
occupation time process $L_t$ is absolutely continuous, with local
time density $L_t(x)$ jointly continuous in $x$ and $t$.

It is shown in \cite{lalley07} that for the \emph{SIR}-1 epidemic in
$\zz{Z}$ with village size $N$, the particle density processes,
suitably rescaled, converge as $N\to \infty$ to the density process of
a standard Dawson-Watanabe process or a Dawson-Watanabe process with
location-dependent killing, depending on whether the total number of
initial infections is below a critical threshold or not. In dimensions
$d\geq 2$, one cannot expect such a result to hold, because the
Dawson-Watanabe process is almost surely singular with respect to the
Lebesgue measure and therefore has no associated density
process. However, as \emph{measure-valued} processes, the
\emph{SIR}-$d$ ($d=2,3$) epidemics, under suitable scaling, do
converge, as the next theorem asserts.  For the \emph{SIR}-$d$ model
with village size $N$, define
\begin{align}
\label{eq:SIR-X}
    X_i^N(x) :&=\#\text{ infected particles at }\; x \; \text{at
    time} \; i;\\
   \notag  R_n^N(x):&=\#\text{ recovered particles at }\; x \;
   \text{at time } n = \sum_{ i< n} X_i^N(x).
\end{align}

\begin{thm}\label{thm:SIR} Assume that $d=2$ or $3$, and suppose that for
some $\alpha \leq 1/(3-d/2)$ the initial configurations $\mu^N:=X_0^N$
are such that
\begin{align}
\label{hyp:initialConfig}
   & \mathcal{F}_{N^{\alpha}}\mu^{N} \Rightarrow  \mu    \text{ with compact support,\quad and}\\
\label{hyp:smoothness} &  ((\mu^NG_{N^{\alpha}t})
(\sqrt{N^{\alpha}}x))/N^{\alpha(2-d/2)}\Rightarrow [(q_{\sigma^2
t} *\mu)/\sigma^2](x)\in C(\zz{R}^{1+d})
\end{align}
 where the second convergence is in
$D([0,\infty);C_b(\zz{R}^d))$. Then
\begin{equation}\label{eq:SIR-convergence}
    (\mathcal{F}_{N^{\alpha}}X^{N})_{N^{\alpha}t}
    \Longrightarrow X_{t}
\end{equation}
where the limit process $X_{t}$ is a Dawson-Watanabe process with
initial configuration $X_{0}=\mu$, variance parameter $\sigma^{2}$,
and killing rate $\theta$.  The killing rate depends on the value of
$\alpha$ as follows:
\begin{itemize}
  \item[(i)] if $\alpha<1/(3-d/2)$, then $\theta\equiv 0$;\quad and
  \item[(ii)] if $\alpha=1/(3-d/2)$, then $\theta=L_t(x)$,
\end{itemize}
where $L_t(x)$ is the local time density of the process $X_{t}$. The
convergence $\Rightarrow$ in \eqref{eq:SIR-convergence} is weak
convergence relative to the Skorokhod topology on $D([0,\infty);M_F(\zz{R}^d))$.
\end{thm}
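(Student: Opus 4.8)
The plan is to build on the absolute continuity of the \emph{SIR}-$d$ epidemic with respect to its branching envelope together with Theorem~\ref{thm:lt}. Let $\mathbb{P}^{\mathrm{SIR}}_N$ and $\mathbb{P}^{\mathrm{BRW}}_N$ denote, respectively, the laws on $D([0,\infty);M_F(\zz{Z}^d))$ of the epidemic $X^N$ and of its branching envelope $\tilde X^N$ (the critical branching random walk with offspring law $\mathcal{R}_N$), coupled on a common space as in \S\ref{ssec:bre} so that $\mathbb{P}^{\mathrm{SIR}}_N\ll\mathbb{P}^{\mathrm{BRW}}_N$. Since $\mathcal{R}_N=\mathrm{Binomial}(N,1/[(2d+1)N])$ has an exponentially decaying tail, Watanabe's theorem and Theorem~\ref{thm:lt} apply to $\tilde X^N$ with $k=N^\alpha$: under the hypotheses \eqref{hyp:initialConfig}--\eqref{hyp:smoothness} we get $(\mathcal{F}_{N^\alpha}\tilde X^N)_{N^\alpha t}\Rightarrow X_t$, the super-Brownian motion with variance parameter $\sigma^2$ started at $\mu$, and $R^N_{N^\alpha t}(\sqrt{N^\alpha}x)/N^{\alpha(2-d/2)}\Rightarrow L_t(x)$, its local time density, and (along the proof of Theorem~\ref{thm:lt}) these hold jointly. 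The first task is to record the Radon--Nikodym derivative $D_N:=d\mathbb{P}^{\mathrm{SIR}}_N/d\mathbb{P}^{\mathrm{BRW}}_N$ as an explicit functional of the trajectory, a product over generations $n$ and sites $y$ of ratios of one-step conditional laws: given the configuration through generation $n$, the epidemic places at $y$ a $\mathrm{Binomial}\bigl(N-R^N_n(y),\,1-(1-p_N)^{S^N_n(y)}\bigr)$ number of new infecteds, with $S^N_n(y)=\sum_{x:\,|x-y|\le1}X^N_n(x)$, while the envelope places a $\mathrm{Binomial}\bigl(NS^N_n(y),\,p_N\bigr)$ number, so the two conditional laws have the same variance-to-mean ratio and conditional means differing only by a small deficit $\Delta^N_{n+1}(y)$.

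Next I would expand $\log D_N$ around the envelope. Writing $m^N_{n+1}(y)$ for the branching random walk's one-step conditional mean at $(n+1,y)$ and $\rho^N_n(y):=R^N_n(y)/N$, a Taylor expansion gives
\begin{multline*}
  \log D_N \;=\; -\sum_{n<N^\alpha t}\sum_y\frac{\Delta^N_{n+1}(y)}{m^N_{n+1}(y)}\bigl(\tilde X^N_{n+1}(y)-m^N_{n+1}(y)\bigr)\\
     \;-\;\frac12\sum_{n<N^\alpha t}\sum_y\frac{\Delta^N_{n+1}(y)^2}{m^N_{n+1}(y)}\;+\;\mathcal{E}_N,
\end{multline*}
in which the first sum is a $\mathbb{P}^{\mathrm{BRW}}_N$-martingale, the second is its compensator, and $\mathcal{E}_N$ collects higher-order terms. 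The algebraic heart of the matter is that $\Delta^N_{n+1}(y)/m^N_{n+1}(y)=\rho^N_n(y)+O(S^N_n(y)/N)$, so that both displayed sums are, to leading order, controlled by $\rho^N$. Substituting the scaling $k=N^\alpha$ and using Theorem~\ref{thm:lt} in the form $\rho^N_{N^\alpha t}(\sqrt{N^\alpha}x)\approx N^{\alpha(2-d/2)-1}L_t(x)$, one checks that both the martingale's bracket $\sum_{n,y}\rho^N_n(y)^2m^N_{n+1}(y)$ and the compensator scale like $N^{\alpha(6-d)-2}$; this exponent is negative when $\alpha<1/(3-d/2)$ and zero precisely when $\alpha=1/(3-d/2)$, which is the origin of the dichotomy. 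A careful bound --- using moment estimates on the occupation numbers $X^N_n(y)$ of the type behind Remark~\ref{remark:support} and Proposition~35 of \cite{lz07}, together with the diffusive spreading of the mass --- shows that the correction terms (the $O(S^N_n(y)/N)$ pieces, the effect of two infection attempts landing on the same susceptible, and $\mathcal{E}_N$) are all of strictly smaller order at the critical exponent, in both $d=2$ and $d=3$.

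It then remains, in case (ii) $\alpha=1/(3-d/2)$, to identify the limits. Since $\alpha(2-d/2)-1=-\alpha$ for this $\alpha$, a change of the order of summation gives $\tfrac12\sum_{n,y}\Delta^N_{n+1}(y)^2/m^N_{n+1}(y)\to\tfrac12\int_0^t\la X_s,L_s(\cdot)^2\ra\,ds$, while the martingale term, rewritten as a discrete stochastic integral of the (convergent) integrand $\rho^N$ against the martingale measure naturally attached to the branching random walk, converges to $-\int_0^t\!\int L_s(x)\,dM(s,x)$, where $dM$ is the orthogonal martingale measure of the limiting super-Brownian motion; moreover this holds jointly with $(\mathcal{F}_{N^\alpha}\tilde X^N)_{N^\alpha\cdot}\Rightarrow X$. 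Hence $\bigl((\mathcal{F}_{N^\alpha}\tilde X^N)_{N^\alpha\cdot},\,D_N\bigr)\Rightarrow(X,D)$ with $D=\exp\bigl\{-\int L\,dM-\tfrac12\int\la X_s,L_s(\cdot)^2\ra\,ds\bigr\}$ (and $D_N\to1$ when $\alpha<1/(3-d/2)$). Since the Dawson--Watanabe process with killing rate $\theta=L_t(x)$ exists and, by \eqref{eq:RN}, has precisely $D$ as its density with respect to super-Brownian motion, $E[D]=1$; thus the standard change-of-measure lemma --- if $\mathbb{Q}_N\Rightarrow\mathbb{Q}$ and $d\mathbb{P}_N/d\mathbb{Q}_N=D_N$ with $(\cdot,D_N)\Rightarrow(\cdot,D)$ jointly and $E_{\mathbb{Q}}[D]=1$, then $\mathbb{P}_N\Rightarrow D\,\mathbb{Q}$ (cf.\ \cite{lalley07}) --- yields $(\mathcal{F}_{N^\alpha}X^N)_{N^\alpha\cdot}\Rightarrow D\cdot\mathbb{P}^{\mathrm{SBM}}$, which is the law of the Dawson--Watanabe process with variance parameter $\sigma^2$, initial condition $\mu$, and killing rate $\theta\equiv0$ in case (i) and $\theta=L_t(x)$ in case (ii).

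The main obstacle is the convergence of the discrete stochastic integral $-\sum_{n<N^\alpha t}\sum_y\rho^N_n(y)\bigl(\tilde X^N_{n+1}(y)-m^N_{n+1}(y)\bigr)$ to the It\^o integral $-\int_0^t\!\int L_s(x)\,dM(s,x)$, jointly with the convergence of the rescaled measure-valued paths and of the rescaled occupation densities. This amounts to upgrading the martingale-measure convergence underlying the proof of Watanabe's theorem to a statement about integrals against a nontrivial, path-dependent integrand (the local time density), to matching quadratic variations, and then to controlling the joint law and the uniform integrability of $D_N$ needed to pass to the limit in the change of measure; these steps are the most delicate part of the argument.
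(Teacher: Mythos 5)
Your plan is, in substance, the paper's proof: absolute continuity with respect to the branching envelope, a log--likelihood-ratio decomposition into a martingale term plus a compensator of order $N^{\alpha(6-d)-2}$, identification of the limits via Theorem~\ref{thm:lt} together with the joint convergence of the rescaled measures, occupation densities and orthogonal martingale measures, and a change-of-measure lemma matching the limit against \eqref{eq:RN}. The one organizational difference is that the paper does not Taylor-expand the raw \emph{SIR} likelihood ratio: it first replaces the epidemic by a \emph{modified} epidemic (no collisions, at most one errant attempt per site/time, Poisson offspring), for which the one-step conditional law differs from the envelope's only through the single probability $\kappa_N(y)=\{yR/N\}\wedge 1$, so the likelihood ratio \eqref{eq:LR} is exact and clean; Proposition~\ref{proposition:modifiedSIR} and Lemma~\ref{lemma:collisions}, resting on the second-moment formula of Lemma~\ref{lemma:un2} (exactly the occupation-number moment estimates you invoke), then show the modification is invisible at scale $N^{\alpha}$. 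This is precisely the ``careful bound'' on collisions and higher-order terms that you defer, so your route would prove the same estimates, only inside the Taylor remainder rather than as a separate coupling statement. Two small cautions: the two one-step conditional laws do \emph{not} have the same variance-to-mean ratio ($(1-p_N)^{S}$ versus $1-p_N$), though the mismatch is $O(S/N)$ and is absorbed by the same error terms; and the convergence of the discrete stochastic integral to $\int L\,dM$ and of the compensator, which you rightly flag as the delicate steps, are handled in the paper by quoting the corresponding arguments of \cite{lalley07} and by a variance calculation reducing matters to \eqref{eq:finalObj}.
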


Theorem \ref{thm:SIR} will be proved in \S3.

\begin{remark}\label{remark:criticalThreshold}
Theorem~\ref{thm:SIR} asserts that there is a \emph{critical
threshold} for the \emph{SIR}$-d$ epidemic in dimensions $d=2,3$:
Below the threshold (when the sizes of the initially infected
populations are $\ll N^{\alpha_{*}}$, where $\alpha_{*}=1/(3-d/2)$ is
the \emph{critical exponent}) the effect of finite population size is
not felt, and the epidemic looks much like its branching envelope.  At
the critical threshold, the finite-population effects begin to show,
and the epidemic now looks like a branching random walk with
location-dependent killing.
\end{remark}

\begin{remark}
The critical behavior of the \emph{SIR}-$d$ epidemics in
dimensions $d\geq 4$ is considerably simpler.  It can be shown
that in all dimensions $d\geq 5$, the critical threshold is
$\alpha=1$; in the four dimensional case, the critical threshold
is still $\alpha=1$ but there will be logarithmic corrections --
we thank Ed Perkins for pointing this out to us. 
\end{remark}

\subsection{Notational conventions} Since the proof of
Theorem~\ref{thm:SIR} is based on likelihood ratio calculations,
we shall, at the risk of minor confusion, use the same letters $X$
and $R$, with subscripts and/or superscripts, to denote particle
counts and occupation counts for both branching random walks and
the \emph{SIR}-$d$ epidemic processes (see equations \eqref{eq:RN}
and \eqref{eq:SIR-X}) and for their continuous limits.  Throughout
the paper, we use the notation $f\asymp g$ to mean that the ratio
$f/g$ remains bounded away from $0$ and $\infty$. Also, $C,C_1,$
\hbox{etc.} denote generic constants whose values may change from
line to line. The notation $\delta_{x} (y)$ is reserved for the
Kronecker delta function. The notation $Y_{n}=o_{P} (f (n))$ means
that $Y_{n}/f (n) \rightarrow 0$ in probability; and $Y_{n}=O_{P}
(f (n))$ means that the sequence $|Y_{n}|/f (n)$ is tight.
Finally, we use a ``local scoping rule'' for notation: Any
notation introduced in a proof is local to the proof, unless
otherwise indicated.

\section{Local time for branching random walk in $d=2,3$}

\subsection{Estimates on transition probabilities}\label{ssec:localLimitLaws}
Recall that $\zz{P}^{n}= (P_{n} (x-y))$ is the $n-$step transition
probability kernel for the simple  random walk on $\zz{Z}^{d}$
(with holding parameter $1/ (2d+1)$). For critical branching
random walk, $P_{n} (x,y)$ is the expected number of particles at
site $y$ at time $n$ given that the branching random walk is
initiated by a single particle at site $x$. For this reason, sharp
estimates on these transition probabilities will be of crucial
importance in the proof of Theorem~\ref{thm:lt}. We collect
several useful estimates here. As the proofs are somewhat
technical, we relegate them to the Appendix (section~\ref{sec:app}
below).  Write
\begin{align*}
    \Phi_n(x,y)&=\phi_n(x)+\phi_n(y) \quad
    \text{where} \\
    \phi_n(x)&=\frac{1}{ (2\pi  n)^{d/2}}\exp\left(-\frac{|x|^{2}}{2n}\right)
\end{align*}
is the Gauss kernel in $\zz{R}^{d}$. The first two results relate
transition probabilities to the Gauss kernel.

\begin{lemma}\label{lemma:lltA}
For all sufficiently small $\beta >0$ there exists constant  $C=C
(\beta)>0$ such that  for all integers $m,n\geq 1$ and all $x\in
\zz{Z}^{d}$,
\begin{align}\label{eqn:lclt_bd}
     P_n(x)&\leq C\phi_n(\beta x) \quad \text{and}\\
\label{eqn:dis_conv}
   (P_m * \phi_n)(\beta x)&\leq C\phi_{m+n}(\beta x/2).
\end{align}
Furthermore, for each $A >0$ and each $T>0$ there exists $C=C(A,
T)>0$ such that for all~$k$ sufficiently large and all $|x|\leq A
\sqrt{k}$,
\begin{equation}\label{eqn:green_bd}
  \sum_{n\leq k T}\phi_n(\beta x) \leq C\sum_{n\leq kT}P_n(x).
\end{equation}
\end{lemma}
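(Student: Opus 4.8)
All three estimates rest on two standard facts about the lazy simple random walk on $\zz{Z}^d$, which I would first establish in the Appendix by Fourier inversion: the uniform local central limit theorem
\[
  \sup_{x\in\zz{Z}^d}\bigl|P_n(x)-\bar p_n(x)\bigr|=o(n^{-d/2}),\qquad \bar p_n(x):=\sigma^{-d}\phi_n(x/\sigma),
\]
together with the on-diagonal bound $\sup_x P_n(x)\le C n^{-d/2}$, valid for all $n\ge1$. Two consequences get used repeatedly. First, for each fixed $M$ there are $c_M,C_M>0$ and $n_0(M)$ with $c_M n^{-d/2}\le P_n(x)\le C_M n^{-d/2}$ for $n\ge n_0(M)$ and $|x|\le M\sqrt n$, since $\bar p_n(x)\asymp n^{-d/2}$ on that window and the error is $o(n^{-d/2})$. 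Second, the decomposition $P_n=P_{\lfloor n/2\rfloor}*P_{\lceil n/2\rceil}$ (splitting a summand according to which factor carries at least half of $|x|$), the on-diagonal bound, a Gaussian tail bound for the displacement, and the vanishing of $P_n$ outside the reachable set yield a universal $c_0>0$ with $P_n(x)\le C n^{-d/2}e^{-c_0|x|^2/n}$ for all $n\ge1$ and $x\in\zz{Z}^d$.

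For \eqref{eqn:lclt_bd}: because $\sigma^2=2/(2d+1)<1$ one has $1/\sigma^2>\beta^2$ for $\beta<1$, so on $|x|\le\sqrt n$ the local CLT gives $P_n(x)\le 2\bar p_n(x)=2\sigma^{-d}(2\pi n)^{-d/2}e^{-|x|^2/(2\sigma^2 n)}\le 2\sigma^{-d}\phi_n(\beta x)$, while on $|x|>\sqrt n$ the tail bound gives $P_n(x)\le C n^{-d/2}e^{-c_0|x|^2/n}\le C\phi_n(\beta x)$ as soon as $\beta^2\le 2c_0$; the finitely many small $n$ are harmless, so every $\beta$ below a fixed threshold works. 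For \eqref{eqn:dis_conv}: after applying \eqref{eqn:lclt_bd} to $P_m$ it suffices to bound $\sum_{z\in\zz{Z}^d}\phi_m(\beta(x-z))\phi_n(\beta z)$; the Gaussian‑product identity $\phi_m(a)\phi_n(b)=\phi_{m+n}(a+b)\,\phi_{mn/(m+n)}\bigl((na-mb)/(m+n)\bigr)$ with $a=\beta(x-z)$, $b=\beta z$ pulls the factor $\phi_{m+n}(\beta x)$ out of the sum and leaves a lattice sum $\sum_{z}\phi_{u}(\beta(c-z))$ with $u=mn/(m+n)\ge\tfrac12$ and some $c\in\zz{R}^d$, which is bounded by a constant $C(\beta,d)$ uniformly in $u\ge\tfrac12$ and $c$ (a Gaussian of width $\asymp\beta^{-1}$ summed over $\zz{Z}^d$). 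Hence the left side is $\le C(\beta,d)\phi_{m+n}(\beta x)\le C(\beta,d)\phi_{m+n}(\beta x/2)$, the last step merely discarding part of the exponent.

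For \eqref{eqn:green_bd}: fix $A,T$ and take the local‑CLT window $M:=2A/\sqrt T$. For $|x|\le A\sqrt k$ the level $n_1:=\max\{n_0(M),|x|^2/M^2\}$ satisfies $n_1\le\max\{n_0(M),kT/4\}\le kT$ for all large $k$, and $|x|\le M\sqrt n$ on $[n_1,kT]$, so the first consequence gives $\sum_{n\le kT}P_n(x)\ge c_M\sum_{n_1\le n\le kT}n^{-d/2}$. Separately, $\sum_{n\le kT}\phi_n(\beta x)=\sum_{n\le kT}(2\pi n)^{-d/2}e^{-\beta^2|x|^2/(2n)}$ has a summand that rises to its maximum at $n^\star=\beta^2|x|^2/d$ and decays like $n^{-d/2}$ beyond $2n^\star$; a direct computation shows that, when $n^\star\le kT$, this sum has the same order in $k$ (up to constants depending on $\beta,d,A,T$) as the lower bound just obtained, both being governed by $\max(1,|x|^2)$ relative to $kT$ — a power $\asymp\max(1,|x|^2)^{1-d/2}$ when $d=3$, a logarithm when $d=2$ — and here one uses $n_1\asymp\max(1,|x|^2)$. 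This gives \eqref{eqn:green_bd} when $n^\star\le kT$. The complementary case $n^\star>kT$ can occur, under $|x|\le A\sqrt k$, only if $\beta^2>dT/A^2$; then $n\mapsto\phi_n(\beta x)$ is increasing on $[1,kT]$, so $\sum_{n\le kT}\phi_n(\beta x)\le kT\,\phi_{kT}(\beta x)\le C(kT)^{1-d/2}$, while the first consequence applied on $n\in[\epsilon k,kT]$ (where $|x|\le(A/\sqrt\epsilon)\sqrt n$, for a small $\epsilon=\epsilon(T)$) gives $\sum_{n\le kT}P_n(x)\ge c\,k^{1-d/2}$, closing this case.

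The essential difficulty is \eqref{eqn:green_bd}: the local CLT lower bound on $P_n(x)$ is available only on the window $|x|\lesssim\sqrt n$, and one must verify that the corresponding portion of $\sum_{n\le kT}P_n(x)$ still dominates $\sum_{n\le kT}\phi_n(\beta x)$ for every $|x|$ up to $A\sqrt k$. The binding regime is $|x|\asymp A\sqrt k$ with $T$ small, where $x$ is reached only by a moderate deviation of the walk, both sums are of order $k^{1-d/2}$, and the comparison is of two small quantities and must be done carefully. Establishing the uniform local CLT and the on-diagonal bound from scratch in the Appendix is routine Fourier analysis, but it is where most of the technical effort lies.
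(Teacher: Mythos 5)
Your proof is correct, and its overall architecture matches the paper's: a local CLT estimate in the diffusive window plus an exponential tail bound outside it for \eqref{eqn:lclt_bd}, reduction of \eqref{eqn:dis_conv} via \eqref{eqn:lclt_bd} to a purely Gaussian convolution bound, and a comparison of the two Green-type sums through their common dependence on $\max(1,|x|^2)$ versus $kT$ for \eqref{eqn:green_bd}. The implementations differ in instructive ways. For \eqref{eqn:lclt_bd} the paper splits at radius $\sqrt{Ln\log n}$ so that the bare Hoeffding bound $P_n(x)\le 2d\exp(-|x|^2/(2dn))$ (which carries no $n^{-d/2}$ prefactor) can absorb the missing polynomial factor into the exponential, whereas your Chapman--Kolmogorov splitting manufactures the prefactor directly and lets you cut at $\sqrt{n}$; both work. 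For \eqref{eqn:dis_conv} the paper argues probabilistically, viewing the discretized Gaussians as the laws of $[\Lambda_i]$ and $[\Lambda_j]$ and using that $[\Lambda_i]+[\Lambda_j]$ differs from $\Lambda_i+\Lambda_j$ by at most $2$; that rounding step is precisely what costs the degradation from $\beta$ to $\beta/2$ in the statement. Your exact Gaussian product identity is a clean algebraic substitute that in fact yields the sharper bound $C\phi_{m+n}(\beta x)$ before you voluntarily discard the improvement. For \eqref{eqn:green_bd} the paper simply quotes Lawler's asymptotics ($\sum_n\phi_n(\beta x)$ and $\sum_n P_n(x)$ both $\asymp |x|^{2-d}$ when $d=3$, both $\asymp C+\log(kT/|x|^2)$ when $d=2$) plus the observation that the truncated walk Green sum is a definite fraction of the full one on $|x|\le A\sqrt{k}$; your version reproves the same comparison by hand from the local CLT window. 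That is more self-contained, but the ``direct computation'' you defer — matching $O(1)+\log^+(kT/(\beta^2|x|^2))$ against $\log(kT/n_1)$ when $d=2$, and $n_1^{-1/2}$ against $\max(1,|x|)^{-1}$ when $d=3$, using $n_1\asymp\max(1,|x|^2)$ and $n_1\le kT/4$ — is the actual content of the lemma in this part and should be written out; it does go through.
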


\begin{lemma}\label{lemma:lclt_diff}
For all sufficiently small $\beta >0$ there exists constant  $C=C
(\beta)>0$ such that for all integers $n\geq 1$ and all $x,y\in
\zz{Z}^{d}$,
\begin{equation}\label{eqn:p_diff}
     |P_n(x) - P_n(y)|\leq
      C \left(\frac{|x-y|}
      {\sqrt{n}}\wedge 1\right)\cdot\Phi_n(\beta x,\beta y).
\end{equation}
In particular, for all $ \gamma \leq 1$,
\begin{equation}\label{p_diff_alpha}
     |P_n(x) - P_n(y)|\leq C
     \left( \frac{|x-y|}{\sqrt{n}}\right)^{\gamma}
    \cdot\Phi_n(\beta x,\beta y).
\end{equation}
\end{lemma}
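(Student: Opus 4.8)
The plan is to separate the ``far'' regime $|x-y|\ge\sqrt n$, where there is nothing to gain, from the ``near'' regime $|x-y|<\sqrt n$, where the extra factor $|x-y|/\sqrt n$ is produced by telescoping along a lattice path. In the far regime \eqref{eqn:p_diff} is immediate from the triangle inequality and the Gaussian upper bound \eqref{eqn:lclt_bd} of Lemma~\ref{lemma:lltA}:
\[
  |P_n(x)-P_n(y)|\le P_n(x)+P_n(y)\le C\phi_n(\beta x)+C\phi_n(\beta y)=C\Phi_n(\beta x,\beta y),
\]
and here $(|x-y|/\sqrt n)\wedge 1=1$. For the near regime, fix a monotone nearest-neighbor lattice path $x=z_0,z_1,\dots,z_m=y$ in which $z_{i+1}-z_i$ is a unit coordinate vector and $m=\|x-y\|_1\le\sqrt d\,|x-y|$. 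Along such a path each coordinate of $z_i$ lies between the corresponding coordinates of $x$ and $y$, so $|z_i-x|\le|x-y|$ and $|z_i-y|\le|x-y|$, whence $|z_i|\ge\max(|x|,|y|)-\sqrt n$; combined with the elementary inequality $(a-b)^2\ge a^2/2-b^2$ this gives $\phi_n(\beta z_i)\le C\Phi_n(\beta' x,\beta' y)$ after replacing $\beta$ by $\beta'=\beta/\sqrt 2$ (and, when $\max(|x|,|y|)\le\sqrt n$, simply using $\phi_n\le Cn^{-d/2}\le C\Phi_n$). Telescoping, $|P_n(x)-P_n(y)|\le\sum_{i=0}^{m-1}|P_n(z_i)-P_n(z_{i+1})|$, and everything is reduced to a single nearest-neighbor estimate.

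The crux, then, is the claim that for every sufficiently small $\beta>0$ there is $C=C(\beta)$ such that, for every unit coordinate vector $e$, every integer $n\ge1$ and every $x\in\zz{Z}^d$,
\[
  |P_n(x)-P_n(x+e)|\le Cn^{-1/2}\,\Phi_n(\beta x,\beta(x+e)).
\]
Granting this, the near-regime case of \eqref{eqn:p_diff} follows by summing over the $m\le\sqrt d\,|x-y|$ steps of the path (using the above control of $\Phi_n(\beta z_i,\beta z_{i+1})$ by $\Phi_n(\beta'x,\beta'y)$) and recalling that $|x-y|/\sqrt n<1$ there, which produces the factor $(|x-y|/\sqrt n)\wedge 1$; combining with the far regime proves \eqref{eqn:p_diff}, and \eqref{p_diff_alpha} is then immediate from $t\wedge 1\le t^\gamma$ for $t\ge0$ and $\gamma\in(0,1]$.

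I expect the nearest-neighbor gradient estimate to be the main obstacle, and I would prove it by Fourier inversion followed by a contour deformation, which is exactly the technique behind \eqref{eqn:lclt_bd} but now carrying along an extra oscillatory factor. With $\psi(\theta)=\frac{1}{2d+1}(1+2\sum_{j=1}^d\cos\theta_j)$ the one-step characteristic function of the lazy walk,
\[
  P_n(x)-P_n(x+e)=(2\pi)^{-d}\int_{(-\pi,\pi]^d}e^{-ix\cdot\theta}(1-e^{-ie\cdot\theta})\psi(\theta)^n\,d\theta ,
\]
and the factor $1-e^{-ie\cdot\theta}=O(|\theta|)$ near the origin supplies the extra power $n^{-1/2}$ once it is integrated against $|\psi(\theta)|^n\le e^{-cn|\theta|^2}$ (for the lazy walk $|\psi(\theta)|\le1-c$ away from $\theta=0$, so that contribution is exponentially small). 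To upgrade the resulting uniform bound $O(n^{-(d+1)/2})$ to the Gaussian spatial decay $\Phi_n(\beta x,\beta(x+e))$, I would shift each contour $\theta_j\mapsto\theta_j-i\beta x_j/n$ — legitimate because $\psi$ extends to an entire function and the integrand is periodic, so the vertical edges cancel — restricting to $|x|_\infty\le n$, outside of which $P_n$ vanishes; the shift introduces the prefactor $e^{-\beta|x|^2/n}$, while $|\psi|^n$ on the shifted contour is enlarged only by $e^{O(\beta^2|x|^2/n)}$, so for $\beta$ small the net is a Gaussian factor comparable to $\phi_n(\beta'x)$. The points requiring care — all routine — are the justification of the contour shift and of discarding the off-origin part, and the short computation showing that the additional term of size $\beta|x_j|/n$ coming from $1-e^{-ie\cdot(\theta-is)}$ is dominated by $n^{-1/2}\phi_n(\beta'x)$ (it amounts to the boundedness of $r\mapsto\beta r\,e^{-\beta r^2/4}$), together with the bookkeeping of the successive reductions of $\beta$. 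This is of a piece with the estimates collected in the Appendix behind Lemma~\ref{lemma:lltA}.
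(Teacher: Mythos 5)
Your argument is correct, but it takes a genuinely different route from the paper's. The paper does not telescope: it splits according to the size of $|x|$ and $|y|$ relative to $\sqrt{n\log n}$ rather than the size of $|x-y|$ relative to $\sqrt{n}$. In the moderate-deviation region it invokes the local CLT with error term, $|P_n(x)-\sigma^{-d}\phi_n(x/\sigma)|\le C n^{-d/2-1}$, observes that this error is at most $C\phi_n(\beta x)/\sqrt{n}$ there, and transfers the difference bound from the continuous Gaussian (for which it is elementary) to $P_n$, using that distinct lattice points satisfy $|x-y|\ge 1$ so that an additive error of order $\Phi_n/\sqrt{n}$ is absorbed into the right-hand side of \eqref{eqn:p_diff}; far from the origin a Hoeffding bound supplies the extra factor $n^{-1/2}$ for free; and a residual mixed case ($x$ near the origin, $y$ far) is handled by interpolating through a point in the intervening annulus. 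You replace all of this with a single-step discrete gradient estimate $|P_n(x)-P_n(x+e)|\le Cn^{-1/2}\Phi_n(\beta x,\beta(x+e))$, proved by Fourier inversion and a contour shift, followed by telescoping along a monotone lattice path; your far regime $|x-y|\ge\sqrt{n}$ is indeed immediate from \eqref{eqn:lclt_bd}. Both arguments are sound. Yours buys a genuine pointwise gradient estimate (the paper's bound at lattice scale really rests on $|x-y|\ge1$ rather than on any smoothness of $P_n$), avoids the three-way case analysis in $|x|$ and the mixed-case interpolation, and is self-contained modulo the contour-deformation computation, which is standard for the lazy nearest-neighbor walk; do carry through the bookkeeping of how $\beta$ shrinks at each of your reductions (telescoping via $(a-b)^2\ge a^2/2-b^2$, and again at the contour shift), since the constant in the lemma is allowed to depend on $\beta$ but the statement must hold for all sufficiently small $\beta$. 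The paper's route, by contrast, quotes the local CLT error bound off the shelf and stays entirely within real-variable estimates, which is why it fits in a short appendix.
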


Our arguments will also require the following estimates on the
discretized Green kernel.

\begin{lemma}\label{lemma:greenA}
For each $\gamma\in(0,2-d/2)$, $\beta>0, h\geq 1,$ $n\in\zz{N}$,
and $x,y\in \zz{Z}^{d}$, define
\begin{equation}\label{eq:def_F}
   F_{n,h}(x,y;\beta)=F_{n,h;\gamma}(x,y;\beta)
   =\sum_{|\rho|<h}\sum_{l<n} \frac{1}{l^{\gamma/2}}
        \Phi_l(\beta(x+\rho),\beta(y+\rho)),
\end{equation}
where the first summation is over $\rho\in\zz{Z}^d$ with
$|\rho|<h$.
 Then there exists $C=C (\gamma,\beta,d)<\infty$
such that for all $n\in\zz{N}$, $h_1,h_2\geq 1$, and all $x,y\in
\zz{Z}^{d}$, the following inequalities hold:
\begin{equation}\label{eqn:green_indc}
   F_{n,h_1}(x,y;\beta)\cdot  F_{n,h_2}(x,y;\beta)
   \leq C n^{2-(d+\gamma)/2} F_{n,h_1+h_2-1}(x,y;\beta),
\end{equation}
and
\begin{equation}\label{eqn:conv}
\aligned
   &\sum_{i<n} \sum_{z} P_i(z)\cdot\left[F_{n-i,h_1}(x-z,y-z;\beta)\cdot  F_{n-i,h_2}(x-z,y-z;\beta)
   \right]\\
   &\leq C
   n^{2-(d+2\gamma)/2}\sum_{|\rho|<h_1+h_2-1}\sum_{l<n}\Phi_l(\beta(x+\rho)/2,\beta(y+\rho)/2)\\
   &\leq C n^{2-(d+\gamma)/2}F_{n,h_1+h_2-1}(x,y;\beta/2).
\endaligned
\end{equation}
(Note that in the last term the $\beta$ parameter is changed to $\beta/2$.)
\end{lemma}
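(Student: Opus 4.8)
The plan is to estimate both $F_{n,h_1}$ and $F_{n,h_2}$ by a single ``master'' quantity controlled by the heat-kernel semigroup property, and then to reduce the sums over $l$ to Green-function-type integrals whose scaling behavior is explicit. First I would record the elementary convolution/semigroup estimate for Gauss kernels, $(\phi_l * \phi_{l'})(z) = \phi_{l+l'}(z)$ together with the crude bound $\phi_l(z)\le C\, l^{-d/2}$ and $\phi_l(z)\le \phi_{l'}(z)\cdot (l'/l)^{d/2}$ for $l'\le l$; these, combined with $\Phi_l(a,b)=\phi_l(a)+\phi_l(b)$, let me treat the $a$- and $b$-contributions separately and symmetrically. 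The key structural observation is that $F_{n,h}(x,y;\beta)$ is, up to the $\rho$-shift, a discretized ``fractional Green kernel'' $\sum_{l<n} l^{-\gamma/2}\phi_l(\beta\,\cdot)$, and since $\gamma < 2-d/2$, i.e. $(d+\gamma)/2 < 2$, the tail sum $\sum_{l<n} l^{-(d+\gamma)/2}$ is of order $n^{2-(d+\gamma)/2}$ (divergent, dominated by the large-$l$ terms) — this is exactly the power appearing on the right-hand sides, so the bookkeeping must be arranged to extract precisely this exponent.

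For \eqref{eqn:green_indc}: I would write the product $F_{n,h_1}F_{n,h_2}$ as a double sum over $(\rho_1,l_1)$ and $(\rho_2,l_2)$, and in each term bound the smaller of $\phi_{l_1}(\beta(x+\rho_1))$-type factors by its supremum $C l_i^{-d/2}$ and keep the other. Summing the $l$ with the smaller index against $l^{-d/2-\gamma/2}$ produces a factor $O(n^{2-(d+\gamma)/2})$ when $(d+\gamma)/2<2$ — wait, one needs a bit of care since we want $n^{2-(d+\gamma)/2}$, not $n^{2-d/2}$; the point is that I should bound one factor $l_i^{-\gamma/2}\phi_{l_i}(\cdot)\le C l_i^{-(d+\gamma)/2}$ and sum \emph{that} one freely over $l_i<n$, giving $O(n^{2-(d+\gamma)/2})$, while the surviving factor, after absorbing the shift by noting $|\rho_1|<h_1,|\rho_2|<h_2 \Rightarrow$ both shifts lie within distance $h_1+h_2-1$ of a common lattice point, reassembles into $F_{n,h_1+h_2-1}(x,y;\beta)$ (the diagonal terms $\rho_1=\rho_2$ are the dominant ones; off-diagonal terms only help). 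The monotonicity $\phi_{l}(\beta(x+\rho))\le$ (something summed over $|\rho'|<h_1+h_2-1$) is where I must be slightly careful, but it is a routine triangle-inequality repackaging.

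For \eqref{eqn:conv}: expand $F_{n-i,h_1}\cdot F_{n-i,h_2}$ as above, then use $(P_i * \phi_l)(z)\le C\phi_{i+l}(z/2)$ from \eqref{eqn:dis_conv} of Lemma~\ref{lemma:lltA} (this is precisely why the output carries $\beta/2$), applied to the \emph{larger} of the two Gauss factors, while the smaller one is again bounded by $C l^{-(d+\gamma)/2}$ and summed off. Summing $\sum_z P_i(z)$ against a single $\phi$ via \eqref{eqn:dis_conv}, then summing over $i<n$ and over the two $l$'s, yields $n^{2-(d+2\gamma)/2}$ from summing \emph{both} small-index constraints (one $l$-sum gives $n^{1-\gamma/2}$ extra over the previous count because we also drop a factor $(i+l)^{-\gamma/2}$ cheaply)—this accounts for the first inequality in \eqref{eqn:conv}; the second inequality is then immediate since $n^{2-(d+2\gamma)/2} = n^{1-\gamma/2}\cdot n^{1-(d+\gamma)/2}$ and $\sum_{l<n}\Phi_l(\beta(x+\rho)/2,\beta(y+\rho)/2)\le n^{1-\gamma/2}\cdot\bigl(\text{the }l^{-\gamma/2}\text{-weighted sum}\bigr)$ up to constants, i.e. reinstating the $l^{-\gamma/2}$ weight costs at most $n^{\gamma/2}$... actually the cleaner route is: $\sum_{l<n}\Phi_l \le n^{\gamma/2}\sum_{l<n} l^{-\gamma/2}\Phi_l$, and folding one factor $n^{-\gamma/2}$ out of $n^{2-(d+2\gamma)/2}$ leaves $n^{2-(d+3\gamma)/2}$... so I will instead verify the second inequality directly by a one-line comparison of the two $l$-sums, absorbing the discrepancy into the constant $C$ using $1-\gamma/2>0$. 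The main obstacle is precisely this \textbf{exponent accounting}: making sure that at each convolution step exactly one power of the ``free'' sum $\sum_{l<n} l^{-(d+\gamma)/2}\asymp n^{2-(d+\gamma)/2}$ (respectively one extra power of $n^{1-\gamma/2}$ in \eqref{eqn:conv}) is spent, and that the condition $\gamma\in(0,2-d/2)$ is used in the right places to guarantee the sums diverge with the claimed rate rather than converging or diverging faster. The geometric/shift bookkeeping with the $\rho$'s is tedious but causes no real difficulty; everything else is the semigroup property of the Gauss kernel plus Lemma~\ref{lemma:lltA}.
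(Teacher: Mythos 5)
Your proposal has a genuine gap, and it is exactly at the point you flag as ``exponent accounting.'' The paper's proof rests on a \emph{symmetric} Gaussian product bound: by monotonicity of $\phi_l$ in $|x|$, $\phi_m(x)\phi_l(y)\leq \phi_m(x)\phi_l(x)+\phi_m(y)\phi_l(y)$, and the pointwise product of two Gauss kernels at the \emph{same} point satisfies $\phi_m(x)\phi_l(x)\leq C(ml)^{-d/4}\phi_{ml/(m+l)}(x)$ (harmonic-mean time). This splits the total decay $(ml)^{-d/2}$ evenly between the two time indices while retaining a single Gauss kernel, so that the double sum becomes $\sum_{m<n}\sum_{l<n}(ml)^{-d/4-\gamma/2}\asymp n^{2-(d+2\gamma)/2}$; here the hypothesis $\gamma<2-d/2$ enters precisely as $d/4+\gamma/2<1$, making each single sum diverge like $n^{1-d/4-\gamma/2}$. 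The surviving kernel $\Phi_{\lceil 2ml/(m+l)\rceil}$ is then what gets convolved against $P_i$ via \eqref{eqn:dis_conv} in the proof of \eqref{eqn:conv}. Note also that the semigroup identity $\phi_l*\phi_{l'}=\phi_{l+l'}$ you lead with is not the relevant tool here: what is needed is the pointwise product at a common point, not a convolution.

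Your asymmetric split --- supping out one factor entirely via $l^{-\gamma/2}\phi_l(\cdot)\leq Cl^{-(d+\gamma)/2}$ --- fails in two ways. First, the arithmetic is wrong: for $d=2,3$ one has $(d+\gamma)/2\geq 1+\gamma/2>1$, so $\sum_{l<n}l^{-(d+\gamma)/2}=O(1)$, not $\asymp n^{2-(d+\gamma)/2}$ as you assert (you are off by one power of $l$ in the summand). Second, and fatally for \eqref{eqn:conv}: after discarding one Gaussian you are left with $\sum_{i<n}\sum_zP_i(z)F_{n-i,h_2}(x-z,y-z;\beta)$, and applying \eqref{eqn:dis_conv} and summing over $i$ gives at best $Cn^{1-\gamma/2}\sum_{l<n}\Phi_l(\beta(x+\rho)/2,\cdot)$, which is \emph{strictly weaker} than the required $Cn^{2-(d+2\gamma)/2}\sum_{l<n}\Phi_l$ because $1-\gamma/2>2-d/2-\gamma$ whenever $d\geq 2$. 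The exact exponent $2-(d+\gamma)/2=\eta$ is not cosmetic: it accumulates multiplicatively as $n^{\eta(h-1)}$ through the cumulant induction in the proof of Theorem~\ref{thm:lt}, and a weaker exponent there destroys the $k$-power counting in the Kolmogorov--Centsov bound. (Incidentally, your route would suffice for \eqref{eqn:green_indc} alone, since $F_{n,h}$ is uniformly bounded in $n,x,y$ for $d\geq 2$; it is \eqref{eqn:conv} that forces the harmonic-mean decomposition.)
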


\begin{lemma}\label{lemma:greenB}
For each $\beta>0, h\geq 1$, $m, n\in\zz{N}$,  and $x\in
\zz{Z}^{d}$, define
\begin{equation}\label{eq:def_J}
   J_{m,n,h}(x;\beta)=
   \sum_{|\rho|<h}\sum_{m\leq l<m+n} \phi_l(\beta(x+\rho)),
\end{equation}
where the first summation is over $\rho\in\zz{Z}^d$ with
$|\rho|<h$. Then there exists $C=C(\beta)>0$ such that for all $m,
n\in\zz{N}$, $h_1,h_2\geq 1$, and all $x\in \zz{Z}^{d}$, the
following inequalities holds:
\begin{equation}\label{eqn:green_indc_B}
   J_{m,n,h_1}(x;\beta)\cdot  J_{m,n,h_2}(x;\beta)
   \leq C n^{2-d/2} J_{m,n,h_1+h_2-1}(x;\beta),
\end{equation}
and
\begin{equation}\label{eqn:conv_B}
   \sum_{i<n} \sum_{z} P_i(z)\cdot\left[J_{m,n-i,h_1}(x-z;\beta)\cdot  J_{m,n-i,h_2}(x-z;\beta)
   \right]
   \leq C n^{2-d/2}J_{m,n,h_1+h_2-1}(x;\beta/2).
\end{equation}
\end{lemma}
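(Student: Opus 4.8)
The plan is to derive both estimates by the Gauss-kernel bookkeeping already used for Lemma~\ref{lemma:greenA}, of which \eqref{eqn:green_indc_B} and \eqref{eqn:conv_B} are the $\gamma=0$, shifted-window ($[m,m+n)$ in place of $[0,n)$) analogues. Everything rests on three elementary pointwise facts, which I would record first. (a) \emph{Product inequality}: if $1\le i\le j$ then $\phi_i(\beta a)\,\phi_j(\beta b)\le C\,i^{-d/2}\,\phi_j(\beta a)$ for all $a,b\in\zz{R}^d$, obtained from $\phi_j(\beta b)\le(2\pi j)^{-d/2}$ and $\phi_i(\beta a)\le(j/i)^{d/2}\phi_j(\beta a)$ (the latter because $i\le j$ makes the exponent of $\phi_i$ no larger in absolute value than that of $\phi_j$). (b) \emph{Semigroup with loss}: $(P_i*\phi_l)(\beta w)\le C\,\phi_{i+l}(\beta w/2)$ for all $i,l\ge 0$ and $w$; this is exactly \eqref{eqn:dis_conv} of Lemma~\ref{lemma:lltA}, and is the sole source of the halving $\beta\to\beta/2$ in \eqref{eqn:conv_B}. (c) \emph{Window sum}: $\sum_{m\le l<m+n}l^{-d/2}\le C\,n^{2-d/2}$ for all $m,n\ge1$ --- logarithmic when $d=2$, bounded when $d=3$, and $\le Cn^{2-d/2}$ in either case.

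For \eqref{eqn:green_indc_B} I would expand the product $J_{m,n,h_1}(x;\beta)\,J_{m,n,h_2}(x;\beta)$ as a fourfold sum over $(\rho_1,l_1)$ and $(\rho_2,l_2)$, split it according to whether $l_1\le l_2$ or $l_1>l_2$, and treat the case $l_1\le l_2$, the other being symmetric after relabelling. Applying (a) with $a=x+\rho_1$, $b=x+\rho_2$, $i=l_1\le j=l_2$ replaces the generic summand $\phi_{l_1}(\beta(x+\rho_1))\,\phi_{l_2}(\beta(x+\rho_2))$ by $C\,l_1^{-d/2}\,\phi_{l_2}(\beta(x+\rho_1))$; since $|\rho_1|<h_1\le h_1+h_2-1$ the surviving centre is admissible for the target $J_{m,n,h_1+h_2-1}$. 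Carrying out the sum over $l_1$ in the range $m\le l_1\le l_2$ contributes, by (c), a factor $\le Cn^{2-d/2}$; what is left reduces, by monotonicity in the shift radius $h_1\le h_1+h_2-1$, to $\le C\,J_{m,n,h_1+h_2-1}(x;\beta)$, which gives \eqref{eqn:green_indc_B}.

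For \eqref{eqn:conv_B} I would bound each inner bracket by \eqref{eqn:green_indc_B} (with $n$ replaced by $n-i\le n$, and $J_{m,n-i,h}\le J_{m,n,h}$), reducing the left side to $C\,n^{2-d/2}\sum_{i<n}\sum_z P_i(z)\,J_{m,n,h_1+h_2-1}(x-z;\beta)$, then expand $J_{m,n,h_1+h_2-1}$ and push the $z$-sum through one term at a time, using (b) in the form $\sum_z P_i(z)\phi_l(\beta(x+\rho-z))=(P_i*\phi_l)(\beta(x+\rho))\le C\phi_{i+l}(\beta(x+\rho)/2)$. The substitution $l'=i+l$ merges the convolution time into the window, and a comparison $\sum_{m\le l'<m+2n}\phi_{l'}\le C\sum_{m\le l'<m+n}\phi_{l'}$ brings one to $C\,n^{2-d/2}\sum_{|\rho|<h_1+h_2-1}\sum_{m\le l'<m+n}\phi_{l'}(\beta(x+\rho)/2)=C\,n^{2-d/2}J_{m,n,h_1+h_2-1}(x;\beta/2)$. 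The delicate point here --- and the step I expect to be the main obstacle --- is the summation over the convolution time $i$: each $P_i$ is a probability kernel, so a careless term-by-term estimate overcounts by a full power of $n$, and one cannot in fact simply pull the product bound out of the $i$-sum as sketched. Instead one must exploit that the Green kernel $G_n=\sum_{i<n}P_i$ is concentrated near the diagonal and is only mildly large (bounded for $d=3$, logarithmic for $d=2$), so that convolving $P_i$ with $\phi_l$ genuinely merges the two times rather than producing an independent factor. This is the same interlacing of the time- and space-summations that must be carried out in the proof of \eqref{eqn:conv}; once that is done there, the $\gamma=0$ case \eqref{eqn:conv_B} follows along identical lines.
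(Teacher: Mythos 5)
Your argument for \eqref{eqn:green_indc_B} is essentially correct, though it takes a different route from the paper: you put all of the decay on the smaller time, bounding $\phi_{l_1}(\beta(x+\rho_1))\phi_{l_2}(\beta(x+\rho_2))$ by $C\min(l_1,l_2)^{-d/2}\phi_{\max(l_1,l_2)}(\beta(x+\rho_1))$, whereas the paper's inequality \eqref{eqn:phi_t} keeps the \emph{symmetric} weight $(l_1l_2)^{-d/4}$ together with a Gaussian at the harmonic-mean time $\lceil 2l_1l_2/(l_1+l_2)\rceil$, which stays in the window $[m,m+n)$. For \eqref{eqn:green_indc_B} either version works, because the time index of the surviving Gaussian is one of the two summation variables and the other is summed against its weight to produce $n^{2-d/2}$. (Both your argument and the paper's absorb a factor of order $h_1^d+h_2^d$ from the $\rho$-sums into the constant; this is harmless in the application, where $h$ is bounded.)

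The genuine gap is in \eqref{eqn:conv_B}, and you have located it yourself: first applying \eqref{eqn:green_indc_B} inside the bracket and then convolving loses a factor of $n$, since after $(P_i*\phi_l)(\beta w)\le C\phi_{i+l}(\beta w/2)$ the sum over the convolution time $i<n$ multiplies the window sum instead of being absorbed into it. But deferring to ``the same interlacing as in \eqref{eqn:conv}'' does not close the gap, for two reasons. First, that interlacing \emph{is} the content of \eqref{eqn:conv_B} and must be exhibited: one expands the product \emph{before} convolving (as in \eqref{eqn:gimel_prod}), interchanges sums so that $i$ runs over $i<\min(m+n-l_1,m+n-l_2)$ innermost, applies \eqref{eqn:dis_conv}, and observes that $i+\lceil 2l_1l_2/(l_1+l_2)\rceil$ ranges injectively in $i$ over a subset of $[m,m+n)$ because $\lceil 2l_1l_2/(l_1+l_2)\rceil\in[\min(l_1,l_2),\max(l_1,l_2)]$; thus the $i$-sum is swallowed by the window of the target $J$, and the prefactor comes from $\sum_{l_1,l_2}(l_1l_2)^{-d/4}\le\bigl(\sum_{m\le l<m+n}l^{-d/4}\bigr)^2\le Cn^{2-d/2}$. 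Second --- and this is why ``along identical lines'' fails --- your building block (a) does not survive this interlacing: with the asymmetric weight the surviving Gaussian sits at time $i+\max(l_1,l_2)$, so once the $i$-sum has filled the window the $\max$-variable becomes a free sum of length $n$, and the total weight $\sum_{m\le l_1,l_2<m+n}\min(l_1,l_2)^{-d/2}$ is genuinely larger than $n^{2-d/2}$ (already for $m=1$, $d=3$ it is of order $n$, versus $n^{1/2}$ for the symmetric weight). One therefore needs the symmetric harmonic-mean bound \eqref{eqn:phi_t}, not (a). Finally, the window comparison $\sum_{m\le l'<m+2n}\phi_{l'}(\beta w)\le C\sum_{m\le l'<m+n}\phi_{l'}(\beta w)$ that your sketch invokes is false pointwise in $w$ (e.g.\ $\phi_2(\beta w)/\phi_1(\beta w)\to\infty$ as $|w|\to\infty$); the paper never needs it, precisely because the constraint on $i$ keeps $i+\lceil 2l_1l_2/(l_1+l_2)\rceil<m+n$.
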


\subsection{Proof of Theorem \ref{thm:lt}}\label{ssec:proofThLLT}
For notational ease, we omit the superscript $k$ in the arguments
below: thus, we write $X_{n} (x)$ instead of $X^{k}_{n} (x)$, and
$R_{n} (x)$ instead of $R^{k}_{n} (x)$. To prove the theorem it
suffices to prove that (1) the sequence of random processes
$(R_{kt}(\sqrt{k}x)/k^{2-d/2})$ is tight in the space
$D([0,\infty); C_b(\zz{R}^d))$; and (2) that the only possible
weak limit is the local time density process $L_{t} (x)$. The
second of these is easy, given Lemma~\ref{lemma:jtconv_sbm_lt}.
This implies that for any test function $ \psi\in C_c(\zz{R}^d)$,
$$ \aligned
  & \frac{1}{k^{2}}\sum_x R_{kt}(\sqrt{k}x)\psi(x) \\
  &=\frac{1}{k}\sum_{i\leq kt} \sum_x X_i(\sqrt{k} x)\psi(x)/k\\
  &\to_{\mathcal{L}} \int_0^t X_s(\psi)\,ds,
\endaligned $$
where $(X_t)$ is the super-Brownian motion started in
configuration $X_{0}=\mu$, run at speed~$\sigma$. Hence, any weak
limit of the sequence $(R_{kt}(\sqrt{k}x)/k^{2-d/2})$ must be a
density of the occupation measure for super-Brownian motion. On
the other hand, by Remark \ref{rmk:mu_smooth} and Sugitani's
Theorem,
$$
  \int_0^t X_s(\psi)\,ds = \int_x L_t(x)\psi(x)\, dx.
$$
It follows that $L_{t} (x)$ is the only possible weak limit.

Thus, to prove Theorem~\ref{thm:lt} it suffices to prove that the
sequence  $(R_{kt}(\sqrt{k}x)/k^{2-d/2})$ is tight
in the space $D([0,\infty); C_b(\zz{R}^d))$. In view of hypothesis
\eqref{ass:mu_smooth}, it is enough to prove the  tightness of the
re--centered sequence
\begin{equation}\label{eq:R-recentered}
    Y_{k} (t,x):=\left(R_{kt}(\sqrt{k}x)-(\mu^k G_{kt})(\sqrt{k}x)\right)\!/\,k^{2-d/2}.
\end{equation}
This we will accomplish by verifying a form of the
Kolmogorov-Centsov criterion. According to this criterion, to
prove tightness it suffices to prove that for each compact subset
$K$ of $[0,\infty )\times \zz{R}^{d}$ there exist constants
$C<\infty$, $\alpha >0$, and $\delta >d+1$ such that for all pairs
$(s,a), (t,b)\in K$,
\begin{align}\label{eq:kc-critSpace}
    E|Y_{k} (t,a)-Y_{k} (t,b)|^{\alpha }&\leq C |a-b|^{\delta } \quad \text{and}\\
\label{eq:kc-critTime}
    E|Y_{k} (t,a)-Y_{k} (s,a)|^{\alpha }&\leq C |t-s|^{\delta }     .
\end{align}
The trick is to not work with moments
directly, but instead, following the strategy of \cite{Sugitani89}, to
work with cumulants:

\begin{lemma}[Lemma 3.1 in \cite{Sugitani89}]\label{lemma:cum_mom}
 Let $X$ be a random variable with moment generating function
$E\exp(\theta X) =\exp(\sum_{n=1}^\infty\theta^n a_n)$. If for
some integer $N$ there exists $r,b>~0$ such that
$$
 |a_n|\leq b r^n,\text{ for } n\leq 2N,
$$
then there exists $ C= C(b,N)>0$ such that
$$
  E X^{2N}\leq Cr^{2N}.
$$
\end{lemma}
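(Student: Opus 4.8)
The plan is to convert the hypothesis on the coefficients $a_n$ (which are, up to the factor $1/n!$, the cumulants of $X$) into an explicit \emph{finite} formula for $EX^{2N}$, and then to estimate that formula by brute force. Write $f(\theta)=\sum_{n\geq 1}a_n\theta^n$, so the hypothesis reads $Ee^{\theta X}=e^{f(\theta)}$; this is a meaningful statement only if both sides are finite for $\theta$ in some neighborhood of $0$, and in particular it forces the moment generating function of $X$ to be finite near the origin, so that $X$ has finite moments of every order and $Ee^{\theta X}=\sum_{m\geq 0}(EX^m/m!)\theta^m$ with absolute convergence for small $\theta$. Expanding the other side as $e^{f(\theta)}=\sum_{k\geq 0}f(\theta)^k/k!$ and collecting the coefficient of $\theta^{2N}$, I would equate coefficients to obtain
\[
  EX^{2N}=(2N)!\sum_{k=1}^{2N}\frac{1}{k!}\sum_{\substack{n_1+\cdots+n_k=2N\\ n_i\geq 1}}a_{n_1}a_{n_2}\cdots a_{n_k}.
\]

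The key point is that every index occurring in this sum lies in $\{1,\dots,2N\}$, so only the coefficients $a_1,\dots,a_{2N}$ appear --- precisely those controlled by the hypothesis. Inserting $|a_{n_i}|\leq b\,r^{n_i}$ makes each inner product at most $b^k r^{n_1+\cdots+n_k}=b^k r^{2N}$, and the number of ordered $k$-tuples of positive integers summing to $2N$ is $\binom{2N-1}{k-1}\leq 2^{2N-1}$. Therefore
\[
  EX^{2N}\leq (2N)!\,r^{2N}\sum_{k=1}^{2N}\frac{b^k}{k!}\binom{2N-1}{k-1}\leq (2N)!\,2^{2N-1}e^{b}\,r^{2N},
\]
which is the assertion with $C=C(b,N)=(2N)!\,2^{2N-1}e^{b}$.

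I do not expect a genuine obstacle here; the one point needing care is the legitimacy of comparing the two power series coefficient by coefficient. This is handled exactly as above: the stated identity forces the MGF to be finite, hence real-analytic, on a neighborhood of $0$, and the series obtained by composing $\exp$ with the analytic function $f$ converges there too, so coefficient extraction is valid. (Equivalently, one may regard the displayed formula for $EX^{2N}$ as an identity of formal power series and then note that finiteness of the MGF near $0$ upgrades it to a numerical identity.) Everything else is the elementary bookkeeping above, whose one real subtlety --- and the reason the hypothesis need only control $a_n$ for $n\leq 2N$ --- is that a composition of $2N$ never has a part larger than $2N$.
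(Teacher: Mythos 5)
The paper does not prove this lemma at all: it is quoted verbatim from Sugitani (1989, Lemma 3.1) and used as a black box, so there is no in-paper argument to compare against. Your proof is correct and self-contained: it is the standard moment--cumulant computation, equating the coefficient of $\theta^{2N}$ in $\sum_m (EX^m/m!)\theta^m$ with that in $\sum_k f(\theta)^k/k!$, and the crucial observation --- that only $a_1,\dots,a_{2N}$ can appear because every composition of $2N$ into positive parts has all parts at most $2N$ --- is exactly why the hypothesis need only control the first $2N$ coefficients. The counting ($\binom{2N-1}{k-1}$ compositions, summing to $2^{2N-1}$) and the resulting constant $C=(2N)!\,2^{2N-1}e^{b}$ are fine, and you correctly flag and dispose of the one delicate point, namely that the hypothesized identity entails finiteness and analyticity of the moment generating function near $0$ so that termwise coefficient extraction is legitimate.
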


\bigskip \noindent \textbf{A. Cumulants.}
In the following discussion we use the notation $\ip{\nu,f}$ to
denote the inner product of a function $f$ and a measure $\nu$ on
$\zz{Z}^{d}$, and we let $R_{n}$ be the occupation measure of the
nearest neighbor branching random walk with Poisson(1) offspring
distribution. By the additivity and spatial homogeneity of the
branching random walk, for any $\psi\in C_c(\zz{Z}^d)$ and for
each $n\geq 1$ there exists a function $\nu_{n}=\nu_n^{\psi} \in
C_c(\zz{Z}^d)$ such that for any (nonrandom) initial configuration
$\mu$,
\[
    E^{\mu}\exp (\ip{R_{n}, \psi})=\exp (\ip{\mu ,\nu_{n}}).
\]
Note that $\nu_{1}=\psi$. The assignment $\psi \mapsto
\nu_{n}^{\psi}$ is monotone in $\psi$, but not in general linear.
Setting $\mu=\delta_x$ and conditioning on the first generation,
we obtain
\[
    \exp(\nu_{n+1}(x)) =\sum_j Q_j \left(\frac{1}{2d+1}\sum_e
    \exp(\psi (x+e)+\nu_n(x+e))\right)^{\!j}
\]
where $\{Q_{j} \}$ is the offspring distribution (in the case of
interest, the Poisson distribution with mean~$1$) and the inner
sum is over the $2d+1$ nearest neighbors $e$ of the origin in
$\zz{Z}^{d}$ (recall that the origin is included in this
collection, since particles of the branching random walk can stay
at the same sites as their parents). Observe that if the offspring
distribution is Poisson(1),  then
\begin{equation}
\label{eqn:nu_indc_s}
    \nu_{n+1}(x) = \frac{1}{2d+1}\sum_e \exp(\psi (x+e)+\nu_n(x+e)) -1.
\end{equation}

Define the \emph{cumulants}
$\kappa_{h,n}(x)=\kappa_{h,n}^\psi(x)$  in the usual way:
\[
    E^\mu \exp(\theta \la R_n, \psi\ra)
     =\exp\left(\left\la \mu, \sum_{h\geq 1}\theta^h \kappa_{h,n}\right\ra \right),\quad\forall\ \theta\in\zz{R}.
\]
By  the arguments of the preceding paragraph, $\kappa_{1,1} =\psi
$ and $\kappa_{h,1}=0$ for all $h\geq 2$, and
by~\eqref{eqn:nu_indc_s},
$$
    \sum_{h\geq 1}\theta^h \kappa_{h,n+1}(x)
     = \frac{1}{2d+1}\sum_e
       \left(
          \exp\left\{ \sum_{h\geq 1} \theta^h
          \kappa_{h,n}(x+e)+\theta\psi (x+e)\right\}  -1
     \right)
     .
$$
Consequently,
\begin{equation}\label{eqn:nuk_indc_0}
     \kappa_{h,n+1}(x) = \frac{1}{2d+1}\sum_{e} \sum_{m=1}^{h}
              \frac{1}{m!} \sum_{\mathcal{P}_{m} (h)}
                       \prod_{i=1}^{m}
                       \{\kappa_{h_{i},n} (x+e) + \delta_{1} (h_{i}) \cdot \psi (x+e)
                       \}\\
\end{equation}
where $\mathcal{P}_{m} (h)$ denotes the set of $m-$tuples
$(h_{1},h_{2},\dotsc ,h_{m})$ of positive integers whose sum is
$h$, and $\delta_{1} (\cdot)$ is the Kronecker delta function.
When $h\geq 2$, the $m=1$ summand in \eqref{eqn:nuk_indc_0} equals
$1/(2d+1)\cdot\sum_{e}\kappa_{h,n} (x+e)=(P_1 * \kappa_{h,n})(x)$,
hence,
$$
 \kappa_{h,n+1}(x)=(P_1 * \kappa_{h,n})(x)
                       + \Xi_{n+1}(x),
$$
where
\begin{equation}\label{eqn:xi}
  \Xi_{n+1}(x)=\Xi_{n+1}(x;h):= \frac{1}{2d+1}\sum_{e} \sum_{m=2}^{h}
              \frac{1}{m!} \sum_{\mathcal{P}_{m} (h)}
                       \prod_{i=1}^{m}
                       \{\kappa_{h_{i},n} (x+e) + \delta_{1} (h_{i}) \psi (x+e)
                       \}.
\end{equation}
Since $\kappa_{h,1}=0$ for all $h\geq 2$, by iteration we then get
\begin{equation}\label{eqn:nuk_indc}
   \kappa_{h,n}(x)=\sum_{l=0}^{n-1}(P_{l} * \Xi_{n-l})(x).
\end{equation}

Consider now the special case
$\psi=\psi_{a,b}:=\delta_{a}-\delta_{b}$ where $ a,b\in \zz{Z}^d$
and $\delta_{x}$ is the Kronecker delta function.  Fix $0< \gamma
<2-d/2$ small, and let
\[
  \eta=\eta(\gamma)=2-(d+\gamma)/2>0.
\]
 Recall that in \eqref{eq:def_F} in Lemma \ref{lemma:greenA} we
defined $F_{n,h}(x,y;\beta)$ for $\beta>0, h\geq 1,n\in\zz{N}$ and
$x,y\in\zz{Z}^d$ as
$$
     F_{n,h}(x,y;\beta)=F_{n,h;\gamma}(x,y;\beta)
   =\sum_{|\rho|<h}\sum_{l<n} \frac{1}{l^{\gamma/2}}
        \Phi_l(\beta(x+\rho),\beta(y+\rho)).
$$
\begin{claim}
For each $h\geq 1$ there exists $C_{h}<\infty$ such that for all
$n\in\zz{N}$ and all $x\in\zz{Z}^{d}$,
\begin{equation}\label{eqn:k_gen}
     |\kappa_{h,n}(x)|
     \leq    C_h|a-b|^{h\gamma}n^{\eta(h-1)}F_{n,h}(a-x,b-x;2^{-(h-1)}\beta);
\end{equation}
moreover, for all $ h\geq 2$, all $n\in\zz{N}$ and all
$x\in\zz{Z}^{d}$,
\begin{equation}\label{eqn:nu_est}
  |\kappa_{h,n}(x)|
  \leq C_h|a-b|^{h\gamma} n^{\eta(h-1)-\gamma/2}\sum_{|\rho|<h}\sum_{l<n}
   \Phi_l(2^{-(h-1)}\beta(a-x+\rho),2^{-(h-1)}\beta(b-x+\rho)).
\end{equation}
\end{claim}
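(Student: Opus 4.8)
The plan is to prove both bounds simultaneously by induction on $h$, using the recursion \eqref{eqn:nuk_indc} and the convolution estimates in Lemma~\ref{lemma:greenA}. The base case $h=1$ is immediate: $\kappa_{1,n} = P_{n-1}*\psi_{a,b}$ essentially (more precisely $\kappa_{1,n}(x) = P_{n-1}(a-x) - P_{n-1}(b-x)$ by linearity of the first cumulant and \eqref{eqn:nuk_indc}), so \eqref{p_diff_alpha} of Lemma~\ref{lemma:lclt_diff} together with the definition \eqref{eq:def_F} of $F_{n,1}$ gives $|\kappa_{1,n}(x)| \le C|a-b|^\gamma \Phi_{n-1}(\beta(a-x),\beta(b-x)) \le C|a-b|^\gamma F_{n,1}(a-x,b-x;\beta)$, which is \eqref{eqn:k_gen} with $h=1$ (and $\eta(h-1)=0$, exponent of $n$ equal to $0$). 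For the inductive step, suppose \eqref{eqn:k_gen} holds for all indices $<h$ (with $h\ge 2$). In \eqref{eqn:xi}, each product $\prod_{i=1}^m \{\kappa_{h_i,n}(x+e) + \delta_1(h_i)\psi(x+e)\}$ with $m\ge 2$ and $\sum h_i = h$ is a product of factors each of which, by induction (or trivially for the $\psi$ term, which contributes a factor bounded by $1$ supported near $a$ and $b$), is bounded by a constant times $|a-b|^{h_i\gamma} n^{\eta(h_i-1)} F_{n,h_i}(a-x-e, b-x-e; 2^{-(h_i-1)}\beta)$. Multiplying these together and using the key product inequality \eqref{eqn:green_indc} of Lemma~\ref{lemma:greenA} repeatedly — which collapses a product of $F$'s into a single $F$ at the cost of a power of $n$ and a halving of $\beta$ — yields a bound of the form $C|a-b|^{h\gamma} n^{\eta(h-1) - \gamma/2} F_{n,h-1+(\text{stuff})}(a-x-e,b-x-e; 2^{-(h-1)}\beta)$ for $\Xi_{n+1}(x+e)$, i.e. a bound that already exhibits the extra $n^{-\gamma/2}$ decay appearing in \eqref{eqn:nu_est}.

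The second step is to feed this bound on $\Xi$ through the iteration \eqref{eqn:nuk_indc}, i.e. to estimate $\sum_{l=0}^{n-1}(P_l * \Xi_{n-l})(x)$. Here I would split according to whether the product bound on $\Xi$ lands as the ``$\Phi$ form'' or the ``$F$ form.'' Using the convolution inequality \eqref{eqn:conv} of Lemma~\ref{lemma:greenA}, the sum $\sum_{i<n}\sum_z P_i(z)[F\cdot F]$ is controlled by $C n^{2-(d+\gamma)/2} F_{n,\cdot}(\cdot;\beta/2) = C n^{\eta} F$. Applied to our $\Xi$-bound (which carries the $n^{\eta(h-1)-\gamma/2}$ prefactor and whose spatial part is essentially a single $F$, or can be written as a convolution of $P$ with a $\Phi$-sum), this produces exactly $C_h |a-b|^{h\gamma} n^{\eta(h-1)} F_{n,h}(a-x,b-x; 2^{-(h-1)}\beta)$ for $|\kappa_{h,n}(x)|$, which is \eqref{eqn:k_gen}. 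For \eqref{eqn:nu_est}, I retain one extra factor of $l^{-\gamma/2}$ from the $\Xi$ estimate rather than absorbing it: since $\kappa_{h,n}(x) = \sum_{l<n}(P_l * \Xi_{n-l})(x)$ and $\Xi_{n-l}$ already carries a factor $(n-l)^{\eta(h-1)-\gamma/2}$, summing $\sum_{l<n}(n-l)^{-\gamma/2}\cdot(\text{const}) \asymp n^{1-\gamma/2}$ absorbs into the heat-kernel convolution sum and leaves the displayed $n^{\eta(h-1)-\gamma/2}\sum_{|\rho|<h}\sum_{l<n}\Phi_l$ form. The bookkeeping of $\beta$-halvings is the reason the statement carries $2^{-(h-1)}\beta$: each application of \eqref{eqn:green_indc} or the last line of \eqref{eqn:conv} replaces $\beta$ by $\beta/2$, and there are at most $h-1$ such steps in building up a product of $h$ factors.

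The main obstacle is the combinatorial/bookkeeping management of the nested sum $\sum_{m=2}^h \frac{1}{m!}\sum_{\mathcal{P}_m(h)}\prod$ in \eqref{eqn:xi}: one must check that for every partition $(h_1,\dots,h_m)$ of $h$ with $m\ge 2$, iterating the product inequality \eqref{eqn:green_indc} across the $m$ factors produces \emph{exactly} the exponent $\eta(h-1)$ of $n$ (with one unit of $\gamma/2$ left over), and the correct support radius $h-1$ in the $F$ index, uniformly over all such partitions; the number of partitions is finite for each $h$, which is why the constants $C_h$ are allowed to depend on $h$. A secondary technical point is handling the $\psi = \delta_a - \delta_b$ terms: a bare $\delta_a(x+e)$ contributes a factor supported at a single point, so when it multiplies $F_{n,h_i}$-type factors one uses that $\delta_a$ is dominated by the $l=$ (small) $\Phi_l$ terms near $a$, i.e. by $C F_{n,1}(a-x-e, a-x-e; \beta)$ after adjusting the support parameter — this is exactly the role of the $\delta_1(h_i)$ term in \eqref{eqn:nuk_indc_0} and it does not change the power counting, only (possibly) the support radius, which is already generously accounted for by $h-1$. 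Once these two points are dispatched, the induction closes and both \eqref{eqn:k_gen} and \eqref{eqn:nu_est} follow.
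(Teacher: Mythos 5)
Your overall architecture is the paper's: induction on $h$, base case via \eqref{p_diff_alpha}, inductive step by bounding $\Xi$ through \eqref{eqn:xi} with the product inequality \eqref{eqn:green_indc}, then feeding through the iteration \eqref{eqn:nuk_indc} with \eqref{eqn:conv}. But there are two errors, one of which breaks the power counting at the central step.

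First, a minor one: $\kappa_{1,n}(x)=E^{\delta_x}\la R_n,\psi\ra=G_n(a-x)-G_n(b-x)$, a \emph{Green's-function} difference, because $R_n$ is the occupation measure $\sum_{i<n}X_i$ --- not $P_{n-1}(a-x)-P_{n-1}(b-x)$. The inequality \eqref{eqn:k_gen} for $h=1$ still holds (apply \eqref{p_diff_alpha} termwise and sum over $l<n$; the $l^{-\gamma/2}$ weights in $F_{n,1}$ exist precisely to absorb this sum), but your identification of the object is wrong. You also need the strengthened form $|\kappa_{1,n}(x)+\psi(x)|\leq C|a-b|^{\gamma}F_{n,1}(a-x,b-x;\beta)$, which holds because $|a-b|\geq 1$ and $F_{n,1}(a-x,b-x;\beta)$ is bounded below at $x=a,b$; this is how the $\delta_1(h_i)\psi$ terms in \eqref{eqn:xi} get absorbed.

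Second, and more seriously: you claim that repeatedly applying \eqref{eqn:green_indc} to collapse the $m$-fold product in \eqref{eqn:xi} into a \emph{single} $F$ yields the prefactor $n^{\eta(h-1)-\gamma/2}$. It does not. The induction hypothesis gives $\prod_i n^{\eta(h_i-1)}=n^{\eta(h-m)}$, and each of the $m-1$ collapses via \eqref{eqn:green_indc} costs $n^{2-(d+\gamma)/2}=n^{\eta}$, for a total of $n^{\eta(h-1)}$ with no $-\gamma/2$ gain; \eqref{eqn:green_indc} never produces the exponent $\eta-\gamma/2$. Worse, once you have collapsed to a single $F$, inequality \eqref{eqn:conv} is no longer applicable (it requires a product of \emph{two} $F$'s inside the convolution with $\sum_l P_l$), and the naive bound on $\sum_{l<n}P_l*F_{n-l,h}$ via \eqref{eqn:dis_conv} loses an extra factor of order $n^{1-\gamma/2}$, which is fatal since $\eta<1$. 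The correct bookkeeping is to collapse only down to a product of exactly two $F$'s, obtaining $\Xi_n\leq C|a-b|^{h\gamma}n^{\eta(h-2)}\sum F_{n,h_1}\cdot F_{n,h-h_1}$, and then apply \eqref{eqn:conv} to the entire convolution $\sum_l P_l*[F\cdot F]$: its last line supplies the final factor $n^{\eta}$ and gives \eqref{eqn:k_gen}, while its middle line supplies $n^{\eta-\gamma/2}$ times the unweighted $\Phi$-sum and gives \eqref{eqn:nu_est}. The $-\gamma/2$ saving thus comes from the interaction between the convolution and the product structure (the $(ml)^{-d/4-\gamma/2}$ double sum in the proof of \eqref{eqn:conv}), not from the $\Xi$ estimate itself, and not from ``retaining an extra $l^{-\gamma/2}$'' in the final summation as you describe.
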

In fact, when $h=1$,
$$\aligned
    |\kappa_{1,n}(x)|& = |E^{\delta_x}\la R_n,\psi\ra|
     =|G_n(a-x) -    G_n(b-x)|\\
    &\leq  C|a-b|^{\gamma }\sum_{l<n} \frac{1}{l^{ \gamma/2}} \Phi_l(\beta (a-x),\beta (b-x))\\
    &=C|a-b|^{\gamma }F_{n,1}(a-x,b-x;\beta),
\endaligned
$$
where in the middle inequality we used  assertion
(\ref{p_diff_alpha}) of Lemma \ref{lemma:lclt_diff}. Furthermore,
since $\psi(x)\neq 0$ if and only if $x=a$ or $b$, in which case
$|\psi(x)|=1$ and \\ $\inf_n C|a-b|^{\gamma
}F_{n,1}(a-x,b-x;\beta)
>0$, we get that for all $n$ and all $x$,
\begin{equation}\label{eq:k_1}
   |\kappa_{1,n}(x)+\psi(x)|\leq C |a-b|^{\gamma }F_{n,1}(a-x,b-x;\beta).
\end{equation}
Now suppose that the claim holds for $1,\ldots,h-1$, and we want
to prove the claim for $h$. First note that in the definition
\eqref{eqn:xi} of $\Xi_n(x)$,  only $\kappa_{h_i}$ for $h_i<h$ are
involved, hence by induction, \eqref{eq:k_1} and
relation~\eqref{eqn:green_indc} we get that for all $n$ and $x$,
$$\aligned
  \Xi_{n}(x)&\leq C |a-b|^{h\gamma} n^{\eta(h-2)}\sum_{e} \sum_{m=2}^{h}
             \sum_{\mathcal{P}_{m} (h)}\\
             &\quad \cdot F_{n,h_1}(a-x+e,b-x+e;2^{-(h_1-1)}\beta)
              \cdot F_{n,h-h_1}(a-x+e,b-x+e;2^{-(h-2)}\beta).
\endaligned
$$
The claims then follows from \eqref{eqn:nuk_indc} and
\eqref{eqn:conv}.

\bigskip \noindent
\textbf{B. Proof of \eqref{eq:kc-critSpace}.} Suppose the initial
configurations $\mu^k$ satisfy the hypotheses of
Theorem~\ref{thm:lt}. For any $a, b\in \zz{R}^d$, we want to
estimate $E^{\mu^k}|Y_{k} (t,a)-Y_{k} (t,b) |^m$. By Lemma
\ref{lemma:cum_mom}, this can be done by setting $\psi=1_{\sqrt{k}
a} - 1_{\sqrt{k} b}$ and estimating $|\la \mu^k,
\kappa_{h,kt}\ra|$.  By (\ref{eqn:nu_est}), for all $ h\geq 2$,
$$
   \aligned
   & |\la \mu^k, \kappa_{h,kt}\ra|
   \leq C_h
   |\sqrt{k}(a-b)|^{h\gamma}\cdot(kt)^{\eta(h-1)-\gamma/2}\cdot\\
   &\hskip 3cm\cdot\sum_{|\rho|<h}\la\mu^k,\sum_{l<kt}
   \Phi_l(2^{-(h-1)}\beta(\sqrt{k}a+\rho-\cdot),2^{-(h-1)}\beta(\sqrt{k}b+\rho-\cdot))\ra.
   \endaligned
$$
 By assumption (\ref{ass:mu_conv}),  we can find an $A>0$ such
that $\text{supp}(\mu^k)\subseteq B(0,A\sqrt{k})$ for all $k,$
where $B(0,r)$ represents the ball of radius $r$ around 0. We
therefore have that for all $k$ sufficiently large,
\begin{equation}\label{eqn:muG_bdd}
   \aligned \max_x \la\mu^k, \sum_{l< k t}\phi_l(2^{-(h-1)}\beta(x-\cdot))\ra
   &=\max_{|x|\leq A\sqrt{k}} \sum_y \mu_k(y) \sum_{l< k t} \phi_l(2^{-(h-1)}\beta(x-y))\\
   &\leq C\sum_y \mu_k(y)\sum_{l< k t}P_l(x-y)\\
   &\leq C k^{2-d/2},
   \endaligned
\end{equation}
where in the first inequality we used (\ref{eqn:green_bd}), and
the last inequality is due to the relative compactness of
$(\mu^kG_{kt})(\sqrt{k}\cdot)/k^{2-d/2}$ assumed in
(\ref{ass:lt_smooth}).
 Hence when $h\geq 2$,
$$
   |\la \mu^k, \kappa_{h,kt}\ra|
   \leq C k^{h\gamma/2 + \eta(h-1)-\gamma/2 +
    2-d/2}\cdot  t^{\eta(h-1)-\gamma/2}\cdot
    |a-b|^{h\gamma}.
$$
Plugging in $\eta = 2-(d+\gamma)/2$ gives us
$$
   |\la \mu^k, \kappa_{h,kt}\ra|
   \leq C k^{(2-d/2)h} \cdot t^{(2-(d+\gamma))(h-1)-\gamma/2}\cdot
    |a-b|^{h\gamma}.
$$
Noting that $E^{\mu^k}\left(Y_{k} (t,a)-Y_{k} (t,b)\right)=0$, by
Lemma \ref{lemma:cum_mom}, we get
$$
   E^{\mu^k}\left|Y_{k} (t,a)-Y_{k} (t,b)\right|^{2h}
   \leq C t^{(2-(d+\gamma)/2)(2h-1)-\gamma/2}\cdot |a-b|^{2h\gamma}.
$$
By choosing $h$ large such that
 $2h\gamma> d+1$ we obtain \eqref{eq:kc-critSpace}.

\bigskip \noindent
\textbf{C. Proof of \eqref{eq:kc-critTime}.}
By the additivity and
spatial homogeneity of the branching random walk, for any $\psi\in
C_c(\zz{Z}^d)$ and for all  $ m,n\in \zz{N}$ there exists a
function $\nu_{n}=\nu_n^\psi \in C_c(\zz{Z}^d)$ such that for any
(nonrandom) initial configuration $\mu$,
$$
 E^\mu \exp(\langle R_{n+m}-R_m, \psi\rangle)
 = \exp(\la\mu, \nu_{(n,m)}\ra).
$$
Letting $\mu=\delta_x$ and conditioning on the
first generation, we get
$$\aligned
  \exp(\nu_{(n,m+1)}(x))
  &= E^{\delta_x} \exp(\la R_{n+1+m} - R_{m+1}, \phi\ra)\\
  &=\sum_j Q_j \left(\frac{1}{2d+1}\sum_e \exp\left( \nu_{(n,m)}(x+e)\right)\right)^{\!j},
\endaligned$$
where $\mathcal{Q}=\{Q_j\}_{j\geq 0}$ denotes the offspring
distribution. In case where the offspring distribution is
Poisson(1), the  equation above implies
\begin{equation}   \label{eqn:nu_indc}
  \nu_{(n,m+1)}(x) = \frac{1}{2d+1}\sum_e \exp\left(
  \nu_{(n,m)}(x+e)\right) -1.
\end{equation}
Define the cumulants $\kappa_{h,(n,m)}$ by
$$
  E^\mu   \exp(\theta\la R_{n+m}-R_m, \psi\ra)
  =\exp(\la \mu,   \sum_{h\geq 1}\theta^h
  \kappa_{h,(n,m)}\ra),\quad\forall\ \theta\in\zz{R}.
$$
Then by (\ref{eqn:nu_indc}),
$$
  \sum_h \theta^h \kappa_{h,(n,m+1)}(x)
  = \frac{1}{2d+1}\sum_e \exp\left[ \sum_h \theta^h \kappa_{h,(n,m)}(x+e)\right] -1.
$$
Therefore
\begin{equation}\label{eqn:nuk_indc_t}
  \kappa_{h,(n,m+1)}(x)
   = \frac{1}{2d+1}\sum_e \sum_{i=1}^h \frac{1}{i!}
  \sum_{\mathcal{P}_{i} (h) } \prod_{j=1}^i
  \kappa_{h_i,(n,m)}(x+e),
\end{equation}
where $\mathcal{P}_{i} (h)$ denotes the set of $i-$tuples
$(h_{1},h_{2},\dotsc ,h_{i})$ of positive integers whose sum is
$h$.  The $m=1$ summand in \eqref{eqn:nuk_indc_t} equals
$1/(2d+1)\cdot\sum_{e}\kappa_{h,(n,m)} (x+e)=(P_1*
\kappa_{h,(n,m)})(x)$, hence when $h\geq 2$,
$$
 \kappa_{h,(n,m+1)}(x)=(P_1* \kappa_{h,(n,m)})(x)
                       + \tilde{\Xi}_{n,m+1}(x),
$$
where
\begin{equation}\label{eqn:txi}
  \tilde{\Xi}_{n,m+1}(x)=\tilde{\Xi}_{n,m+1}(x;h):= \frac{1}{2d+1}\sum_{e} \sum_{i=2}^{h}
              \frac{1}{i!} \sum_{\mathcal{P}_{i} (h)}
               \prod_{j=1}^i \kappa_{h_i,(n,m)}(x+e)
\end{equation}
By iteration we then get that for all $h\geq 2$,
\begin{equation}\label{eqn:nuk_indc_2}
   \kappa_{h,(n,m)}(x)=\sum_{i<m}(P_i * \tilde{\Xi}_{n,m-i})(x).
\end{equation}
For  $\psi= 1_a$ for  $a\in \zz{Z}^d,$ by (\ref{eqn:lclt_bd})
$$
  \kappa_{1,(n,m)}(x) = E^{\delta_x}\la R_{n+m}-R_m, \psi\ra
  =\sum_{m\leq l<m+n} P_l(a-x)
  \leq C\sum_{m\leq l<m+n}   \phi_l(\beta(a-x)).
$$
Furthermore, similarly as in proving the claim in Part A, using
Lemma \ref{lemma:greenB} and (\ref{eqn:nuk_indc_2}) we get that
for all $h$, there exists $C_h>0$ such that
$$
  |\kappa_{h,(n,m)}(x)|\leq C_h n^{(2-d/2)(h-1)}\sum_{|\rho|<h}\sum_{m\leq l<m+n}
    \phi_l\left(2^{-(h-1)}\beta(a-x+\rho)\right) .
$$

We are ready to verify \eqref{eq:kc-critTime}. Setting $\psi=
1_{\sqrt{k} a}$ and using (\ref{eqn:muG_bdd}), we get
$$
 \aligned
 |\la \mu^k, \kappa_{h,(kt,ks)}\ra|
 & \leq C k^{(2-d/2)(h-1)} t^{(2-d/2)(h-1)}\cdot k^{2-d/2} \\
 &= C k^{h(2-d/2)} \cdot t^{(2-d/2)(h-1)}.
 \endaligned
$$
By Lemma \ref{lemma:cum_mom} we then get
$$
  E^{\mu^k}\left|Y_{k} (t+s,a)-Y_{k} (s,a) \right|^{2h} \leq C \cdot t^{(2-d/2)(2h-1)}.
$$
So by choosing $h$ large such that $(2-d/2)(2h-1)>1+d$ we obtain
\eqref{eq:kc-critTime}.

 \qed

\begin{remark}\label{rmk:thm1_general}
 For general offspring distributions $\mathcal{Q}=\{Q_j\}$,
let $f(x)=\log(\sum_j Q_j x^j)$ where $x\geq 0$. If the offspring
distribution $\mathcal{Q}$ has an exponentially decaying tail,
then $f(x)$ can be expanded around $x=1$ as
$f(x)=\sum_{\ell=1}^\infty f^{(\ell)}(1) (x-1)^\ell/\ell!$. Thus
(\ref{eqn:nu_indc_s}) turns into
$$
  \nu_{n+1}(x)
  =\sum_{\ell=1}^\infty f^{(\ell)}(1) \left( \frac{1}{2d+1}\sum_e \exp\left(\psi(x+e)
  + \nu_n(x+e)\right) -1\right)^{\!\ell}\!\!\bigg/\,\ell!,
$$
and
$$
  \sum_h \theta^h \kappa_{h,n+1}(x)
  =\sum_{\ell=1}^\infty f^{(\ell)}(1) \left( \frac{1}{2d+1}\sum_e \exp\left(\theta\psi(x+e) +
   \sum_h \theta^h \kappa_{h,n}(x+e)\right)-1\right)^{\!\ell}\!\!\bigg/\,\ell!.
$$
This enables us to express $\kappa_{h,n+1}(x)$ in terms of
$\psi(x+e)$ and $\kappa_{h,n}(x+e) $ similarly as in
(\ref{eqn:nuk_indc_0}) and in~\eqref{eqn:nuk_indc} (note
$f^{(1)}(1)=1$ because $\mathcal{Q}$ has mean 1), and prove the
Kolmogorov-Centsov criterion for the spatial variable. Similarly
one can verify the Kolmogorov-Centsov criterion for the time
variable.
\end{remark}

\subsection{Sufficient conditions for Assumption
(\ref{ass:lt_smooth})}\label{ssec:sc_ltsmooth} Now we state some
conditions that imply (\ref{ass:lt_smooth}) and are easier to check.

\begin{prop}\label{prop:mu_smooth} Let $d=2$ or $3$. Suppose that
the initial configurations $\mu^k$ are such that
$\mathcal{F}_{k}\mu^{k} \Rightarrow\mu$, and satisfy
\begin{equation}\label{ass:ini_spread}
   \lim_{t\to 0}\sup_k \max_x(\mu^kG_{kt})(x)/k^{2-d/2} =0.
\end{equation}
Then (\ref{ass:lt_smooth}) holds.
 In particular, if any of the following assumptions is satisfied,
 then (\ref{ass:lt_smooth}) holds.

\medskip\noindent
(i) In dimension 2, the maximal number of particles on a single
site is bounded in $k$, \hbox{i.e.},\\
  $\sup_k \max_y \mu^k(y)<\infty.$

\medskip\noindent
(ii)
 In dimension 3, there exist $C_1, C_2>0$ such that
\begin{equation}\label{ass:ini_spread_3d}
  C_2:=\sup_k \max_{x}  \sum_{y\in B(x,  3C_1 k^{1/6})} \mu^k(y)<\infty,
\end{equation}
where $B(x,r)$ denotes the ball of radius $r$ around $x$ for any
$x$ and $r\geq 0$;  that is, the number of particles in any ball
of radius $3C_1 k^{1/6}$ is bounded in $k$.

\medskip\noindent
(iii) In dimension 2, $\mu^k$ is such that $\mu^k(y)$ is a
decreasing function in $|y|$,
  and there exists  $\alpha\in~(0,2) $ such that
\begin{equation}\label{ass:ini_spread_2d_spike}
  \mu^k(y)  \leq C\left(\sqrt{k/(|y|^2+1)}\right)^{\!\alpha},\enspace \forall\, y, k.
\end{equation}

\end{prop}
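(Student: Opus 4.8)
The plan is to prove the Proposition in two steps: (a) that $\mathcal F_k\mu^k\Rightarrow\mu$ together with the small-time spreading condition \eqref{ass:ini_spread} already implies \eqref{ass:lt_smooth}; and (b) that each of (i)--(iii), given $\mathcal F_k\mu^k\Rightarrow\mu$, implies \eqref{ass:ini_spread}.

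\emph{Step (a).} Put $g^k(t,x)=\mu^kG_{kt}(\sqrt kx)/k^{2-d/2}$ and $g(t,x)=[(q_{\sigma^2t}*\mu)/\sigma^2](x)$, and fix $\varepsilon>0$. The idea is to split at time $\varepsilon$ and treat the ``bulk'' increment $g^k(t,x)-g^k(\varepsilon,x)$ (for $t\ge\varepsilon$) by the local central limit theorem: for $l\in[\varepsilon k,kT]$ and $|z|=O(\sqrt k)$ one has $P_l(z)=(1+o_k(1))\phi_{\sigma^2l}(z)$ uniformly (the mass of $\mu^k$ at distance $\gg\sqrt k$ being discarded by tightness of $\{\mathcal F_k\mu^k\}$ and the bound $P_l\le C\phi_l(\beta\cdot)$ of Lemma~\ref{lemma:lltA}), and then, using the Brownian scaling $\phi_{\sigma^2l}(\sqrt kx-y)=k^{-d/2}\phi_{\sigma^2(l/k)}(x-y/\sqrt k)$ and recognizing $k^{-1}\sum_{\varepsilon k\le l<kt}\phi_{\sigma^2(l/k)}(\cdot)$ as a Riemann sum of the $C^1$ integrand $s\mapsto\phi_{\sigma^2s}(\cdot)$ on $[\varepsilon,T]$, one gets
\[
  g^k(t,x)-g^k(\varepsilon,x)=\Big\langle\,\tfrac1{\sigma^2}\big(q_{\sigma^2t}-q_{\sigma^2\varepsilon}\big)(x-\cdot),\ \mathcal F_k\mu^k\Big\rangle+o_k(1),
\]
uniformly over $|x|\le A,\ \varepsilon\le t\le T$. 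Since $q_{\sigma^2t}-q_{\sigma^2\varepsilon}=\int_{\sigma^2\varepsilon}^{\sigma^2t}\phi_u\,du$ is a bounded continuous function depending continuously (in sup norm) on $(t,x)$, the test functions above range over a compact subset of $C_b(\mathbb R^d)$, so with $\|\mathcal F_k\mu^k\|\to\|\mu\|$ the weak convergence upgrades to $g^k(t,x)-g^k(\varepsilon,x)\to g(t,x)-g(\varepsilon,x)$ uniformly on $\{|x|\le A,\ \varepsilon\le t\le T\}$. To finish, \eqref{ass:ini_spread} gives $\sup_k\sup_xg^k(\varepsilon,x)\le\omega(\varepsilon)$ with $\omega(\varepsilon)\downarrow0$, and feeding the bulk estimate back (lower endpoint $\to0$, with monotone convergence $q_{\sigma^2\varepsilon'}*\mu\downarrow0$) also yields $\sup_xg(\varepsilon,x)\le\omega(\varepsilon)$; in particular $g$ is finite, and being a locally uniform limit of the (jointly continuous, piecewise-linearly interpolated) functions $g^k$ it is jointly continuous, so $\mu$ automatically satisfies Sugitani's hypothesis \eqref{ass:mu_smooth}. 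Splitting $t\le T$ into $t\ge\varepsilon$ and $t<\varepsilon$ (using monotonicity in $t$) gives $\limsup_k\sup_{t\le T,|x|\le A}|g^k(t,x)-g(t,x)|\le2\omega(\varepsilon)$; letting $\varepsilon\to0$ gives locally uniform convergence, the $x$-tails being absorbed by $P_l\le C\phi_l(\beta\cdot)$, which (since the limit is continuous) is exactly convergence in $D([0,\infty),C_b(\mathbb R^d))$.

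\emph{Step (b).} In each case I would bound $(\mu^kP_l)(x)=\sum_zP_l(z)\mu^k(x-z)$ using $P_l\le C\phi_l(\beta\cdot)$ and the assumed spreading of $\mu^k$, sum over $l<\lceil kt\rceil$, and conclude via the elementary facts that $k^{-a}\le t^a$ once $kt\ge1$ (turning $k$-decay into $t$-decay) and $\mu^kG_{kt}=kt\,\mu^k$ once $kt<1$ (by the interpolation). For (i), $\sum_zP_l(z)=1$ gives $(\mu^kP_l)(x)\le\max_y\mu^k(y)$, hence $(\mu^kG_{kt})(x)/k^{2-d/2}\le2t\sup_k\max_y\mu^k(y)\to0$, no splitting needed. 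For (ii) (assuming, as we may, $3C_1k^{1/6}\ge1$), a shell decomposition $m\sqrt l\le|z|<(m{+}1)\sqrt l$ together with the fact that a ball of radius $(m{+}1)\sqrt l$ is covered by $O(1+(m{+}1)^3l^{3/2}k^{-1/2})$ balls of radius $3C_1k^{1/6}$ (each of $\mu^k$-mass $\le C_2$) gives $(\mu^kP_l)(x)\le CC_2(l^{-3/2}+k^{-1/2})$; summing, $(\mu^kG_{kt})(x)\le CC_2(1+k^{1/2}t)$ when $kt\ge1$, so $(\mu^kG_{kt})(x)/k^{1/2}\le CC_2t^{1/2}$ (the same bound being immediate when $kt<1$). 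For (iii), radial monotonicity gives $\mu^k(B(x,R))\le\mu^k(B(0,R))$, which by \eqref{ass:ini_spread_2d_spike} is $\le C_\alpha k^{\alpha/2}R^{2-\alpha}$ (the bound on the lattice sum $\sum_{|w|\le R}(|w|^2+1)^{-\alpha/2}\lesssim R^{2-\alpha}$ requiring $\alpha<2$); the same shell argument then produces $(\mu^kP_l)(x)\le C_\alpha k^{\alpha/2}l^{-\alpha/2}$ and, since $\alpha/2<1$, $(\mu^kG_{kt})(x)/k\le Ck^{\alpha/2-1}+C_\alpha t^{1-\alpha/2}$, which is $\le C_\alpha't^{1-\alpha/2}$ on $kt\ge1$ and $\le Ct$ on $kt<1$.

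The main obstacle I anticipate is the uniformity required in step (a): the local central limit theorem degrades as $l\downarrow$, so one cannot simply interchange $k\to\infty$ with $t\to0$, and the decomposition into a bulk part (handled by the LCLT and weak convergence) and a short-time part is precisely what \eqref{ass:ini_spread} is tailored to control. On the side of step (b) the only real effort is the exponent bookkeeping in the $d=3$ case, where the covering-number estimate for the shells and the resulting geometric series in $m$ must be balanced exactly against the normalization $k^{2-d/2}=k^{1/2}$; the sharp Gaussian transition-probability estimates of Lemma~\ref{lemma:lltA} are exactly what makes those computations go through.
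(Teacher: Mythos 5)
Your proposal is correct, and while it shares the paper's overall skeleton (reduce everything to the small-time spreading condition \eqref{ass:ini_spread}, then verify that condition under each of (i)--(iii)), the technical route through the reduction step is genuinely different. For step (a) the paper never proves locally uniform convergence directly: it first tests $(\mu^kG_{kt})(\sqrt k\,\cdot)$ against $C_c$ functions and identifies the distributional limit via Lemma~\ref{lemma:green_conv} (itself an Ascoli--Arzel\`a argument), then separately establishes relative compactness of $(\mu^kG_{kt})(\sqrt kx)/k^{2-d/2}$ in $C(\zz{R}^{1+d})$ by a second Ascoli--Arzel\`a verification, using \eqref{ass:ini_spread} for small times and the gradient estimate \eqref{eqn:p_diff} for equicontinuity at times bounded away from $0$; convergence then follows from compactness plus uniqueness of subsequential limits. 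You instead compute the limit head-on: local CLT with its $O(l^{-(d+2)/2})$ error on the bulk $l\ge\varepsilon k$ (the exponent bookkeeping $-(2-d/2)-d/2+1=-1$ checks out), a Riemann-sum passage to $q_{\sigma^2t}-q_{\sigma^2\varepsilon}$, and the upgrade of weak convergence of $\mathcal F_k\mu^k$ to uniform convergence over the compact family of translated kernels, with \eqref{ass:ini_spread} absorbing the $t<\varepsilon$ contribution on both sides. Your route is more self-contained and identifies the limit constructively; the paper's is softer and recycles estimates already needed elsewhere. For step (b), case (i) is identical; in (ii) the paper replaces your shell-plus-covering count by the pointwise averaging inequality $\sqrt k\,\phi_n(\beta(x-y))\le C_3\sum_{z\in B(y,C_1k^{1/6})}\phi_n(\beta(x-z))$ and a near/far split, and in (iii) it centers the convolution at the origin via the rearrangement Lemma~\ref{lemma:fg_central} and evaluates an explicit integral rather than running shells — but these are cosmetic differences yielding the same bounds ($Ct^{1/2}$ in $d=3$, $Ct^{1-\alpha/2}$ in the spiked $d=2$ case). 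One small point worth making explicit in your write-up: your inequality $\mu^k(B(x,R))\le\mu^k(B(0,R))$ for radially decreasing $\mu^k$ is exactly Lemma~\ref{lemma:fg_central} applied with $f$ the indicator of $B(0,R)$, so you may simply cite it rather than reprove a lattice rearrangement fact.
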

\begin{remark}
This proposition is a natural analogue  of Proposition 1 in
\cite{Sugitani89}.
\end{remark}
To prove Proposition \ref{prop:mu_smooth}, we will need the
following result.

\begin{lemma}\label{lemma:green_conv} For any
function $\psi \in C_c(\zz{R}^d)$ and each integer $k\geq 1$, define
$$
 \Psi^k_t(x)
 =\sum_{y\in \zz{Z}^d} \psi(y/\sqrt{k}) G_{kt}(\sqrt{k}x -
y)/k,\quad \text{ for } \; x \in\zz{Z}^d/\sqrt{k} \; \text{and} \; t\in \zz{Z}/k,
$$
 and extend by linear interpolation elsewhere. Then
\begin{equation}\label{eq:Psi_{k}}
    \lim_{k \rightarrow \infty} \Psi^k_t(x) =[(q_{\sigma^2 t}
     *\psi)/\sigma^2](x),
\end{equation}
and the convergence is locally uniform in $t$ and $x$.
\end{lemma}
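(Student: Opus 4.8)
The plan is to identify $\Psi^k_t(x)$, evaluated at grid points, with the expected occupation time of a rescaled simple random walk, and then invoke the invariance principle. Let $(S_i)_{i\ge0}$ be the simple random walk on $\zz{Z}^d$ (holding probability $1/(2d+1)$) started at the origin, with $E_0$ the corresponding expectation; since $P_i(z)=P_0(S_i=z)$, for $x\in\zz{Z}^d/\sqrt k$ and $t\in\zz{Z}/k$ one has
\[
   \Psi^k_t(x)=\frac1k\sum_{i<kt}\sum_{z\in\zz{Z}^d}P_i(z)\,\psi\big(x-z/\sqrt k\big)
   =\frac1k\,E_0\Big[\sum_{i<kt}\psi\big(x-S_i/\sqrt k\big)\Big].
\]
Define $\bar\Psi^k_t(x)$ by the same expression with $kt$ replaced by $\lfloor kt\rfloor$, now for all $(t,x)\in[0,\infty)\times\zz{R}^d$; it agrees with $\Psi^k$ on the grid $(\zz{Z}/k)\times(\zz{Z}^d/\sqrt k)$. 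The increment of $S$ has covariance matrix $\sigma^2 I$, so $W^k:=(S_{\lfloor ks\rfloor}/\sqrt k)_{s\ge0}\Rightarrow(\sigma B_s)_{s\ge0}$ in $D([0,\infty);\zz{R}^d)$ with $B$ a standard Brownian motion. Two elementary changes of variables ($v=\sigma^2 s$, then $u=\sigma z$ inside the convolution) rewrite the target as
\[
   \frac{(q_{\sigma^2 t}*\psi)(x)}{\sigma^2}=\int_0^t(\phi_{\sigma^2 s}*\psi)(x)\,ds=\int_0^t E\,\psi(x-\sigma B_s)\,ds=E\int_0^t\psi(x-\sigma B_s)\,ds,
\]
which in particular shows the limit is bounded by $t\|\psi\|_\infty$ and, by dominated convergence, jointly continuous in $(t,x)$.

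\emph{Pointwise convergence.} Fix $(t,x)$ and set $F(w):=\int_0^t\psi(x-w(s))\,ds$ for a path $w$. Comparing the Riemann sum $\tfrac1k\sum_{i<\lfloor kt\rfloor}$ with the integral of the piecewise-constant path $W^k$ gives $\bar\Psi^k_t(x)=E_0F(W^k)+O(1/k)$. The functional $F$ is bounded by $t\|\psi\|_\infty$ and is continuous at every continuous path (convergence to a continuous limit in the $J_1$ topology is locally uniform, and $\psi$ is uniformly continuous), so by the continuous mapping theorem $E_0F(W^k)\to E\,F(\sigma B)$, which is exactly the right-hand side above. Hence $\bar\Psi^k_t(x)\to(q_{\sigma^2 t}*\psi)(x)/\sigma^2$ for each $(t,x)$.

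\emph{From pointwise to locally uniform.} I would check that $\{\bar\Psi^k\}$ is equicontinuous on compacts, uniformly in $k$ up to a vanishing error. Continuity in $x$ is immediate from uniform continuity of $\psi$: $|\bar\Psi^k_t(x)-\bar\Psi^k_t(y)|\le t\,\omega_\psi(|x-y|)$, where $\omega_\psi$ is the modulus of continuity of $\psi$. Continuity in $t$ uses only $\sum_z P_i(z)=1$: for $s\le t$, $|\bar\Psi^k_t(x)-\bar\Psi^k_s(x)|\le\|\psi\|_\infty(\lfloor kt\rfloor-\lfloor ks\rfloor)/k\le\|\psi\|_\infty(|t-s|+1/k)$. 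Thus on $[0,T]\times\zz{R}^d$ the $\bar\Psi^k$ share the modulus $\omega_T(\delta):=T\omega_\psi(\delta)+\|\psi\|_\infty\delta$ up to an additive $\|\psi\|_\infty/k\to0$. Since $\Psi^k$ is the multilinear interpolant of $\bar\Psi^k$ off a grid whose mesh tends to $0$, this forces $\|\Psi^k-\bar\Psi^k\|_{\infty,K}\to0$ on every compact $K$. Combining the asymptotic equicontinuity with the pointwise convergence just established is a standard Arzel\`a--Ascoli argument: the limit inherits the modulus $\omega_T$ (so it is continuous, consistent with the identity above), and $\bar\Psi^k$, hence $\Psi^k$, converges to it uniformly on compacts, which is the assertion.

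I expect the only mildly delicate point to be this bookkeeping between the lattice, the linear interpolation and the uniform-in-$k$ equicontinuity; it is elementary. A purely analytic alternative --- replacing $P_i(z)$ by the Gaussian $(2\pi\sigma^2 i)^{-d/2}e^{-|z|^2/2\sigma^2 i}$ via the local central limit theorem and reading $\tfrac1k\sum_{i<kt}\sum_z(\cdots)$ as a double Riemann sum --- is possible, but there the genuine nuisance is the non-integrable short-time singularity of the heat kernel, which forces one to excise the range $i\le\varepsilon k$ and bound it separately by $\|\psi\|_\infty\varepsilon$; the probabilistic route avoids this.
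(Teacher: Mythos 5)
Your argument is correct, but it takes a genuinely different route from the paper's. The paper gets pointwise convergence from the local central limit theorem and then proves local uniformity by verifying Ascoli--Arzel\`a for $\Psi^k$ directly, using the kernel estimates of Lemma~\ref{lemma:lclt_diff}: the spatial modulus comes from the gradient bound $|P_n(x)-P_n(y)|\le C(|x-y|/\sqrt n)\Phi_n(\beta x,\beta y)$, whose $n^{-1/2}$ short-time singularity forces the same cutoff $t\le\delta$ (disposed of via $|\Psi^k_t|\le Mt$) that you mention at the end as the ``nuisance'' of the analytic route; the time modulus $\le M(t-s)$ is the same in both proofs. You instead rewrite $\Psi^k_t(x)$ as the expected occupation functional $\frac1k E_0\sum_{i<kt}\psi(x-S_i/\sqrt k)$, get pointwise convergence from Donsker's theorem and the continuous mapping theorem, and --- the key difference --- put the modulus of continuity on $\psi$ rather than on the kernel, giving the clean bound $t\,\omega_\psi(|x-y|)$ with no small-time singularity and no cutoff. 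Your route is softer and self-contained modulo the invariance principle, and avoids both the local CLT and the appendix estimates; the paper's route reuses machinery (Lemmas~\ref{lemma:lltA}--\ref{lemma:lclt_diff}) already needed for Theorem~\ref{thm:lt} and yields a quantitative Lipschitz-type spatial modulus $C\delta^{-(d-1)/2}|x-y|$ rather than a qualitative one, though nothing in the application requires that. Your bookkeeping between $\bar\Psi^k$, the grid, and the interpolant is handled correctly, since asymptotic equicontinuity plus vanishing mesh does force $\|\Psi^k-\bar\Psi^k\|_{\infty,K}\to0$.
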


\begin{proof}
Pointwise convergence \eqref{eq:Psi_{k}} follows from the local
central limit theorem. To prove that the convergence is locally
uniform, it suffices to show that the sequence of
functions$\left(\Psi^k_t(x)\right)$ is relatively compact in
$C(\zz{R}^{1+d})$.  For this, we use the Ascoli-Arzela criterion.
First, we show that the functions $\Psi^k_t(x)$ are uniformly
bounded on any compact set in $\zz{R}^{1+d}$. Denote by $M$ the
maximum of $|\psi(x)|$. Then
\begin{equation} \label{eqn:psi_bdd}
 \aligned
 |\Psi^k_t(x)|
 &\leq \sum_{y\in \zz{Z}^d} |\psi(y/\sqrt{k})| \cdot G_{kt}(\sqrt{k}x
 - y)/k\\
 &\leq M \sum_{y\in \zz{Z}^d}  G_{kt}(\sqrt{k}x -
 y)/k\\
 &\leq Mt.
 \endaligned
\end{equation}
Next, we show they are equi-continuous. Fix $\varepsilon >0$, and
set $\delta =\varepsilon /M$.  By \eqref{eqn:psi_bdd},
$|\Psi^k_t|\leq \varepsilon$ for all $t\leq \delta$; thus,
\[
    |\Psi^k_t(x)-\Psi^k_s(y)|\leq 2\varepsilon,
        \quad \forall \; x,y\in \zz{R}^{d} \;\text{and} \; s,t\leq \delta.
\]
On the other hand, by (\ref{eqn:p_diff}), for all $t\geq\delta$ and  $x\neq y\in
\zz{R}^2$,
$$
 \aligned
 |\Psi^k_t(x)-\Psi^k_t(y)|
 &\leq 2\varepsilon + \frac{\sqrt{k}|x-y|}{k}\sum_{k\delta\leq n\leq kt}\frac{1}{\sqrt{n}}
  \sum_z |\psi(z/\sqrt{k})| \cdot \Phi_n(\beta(\sqrt{k}x -
    z),\beta(\sqrt{k}y - z))\\
 &\leq 2\varepsilon + \frac{\sqrt{k}|x-y|}{k}\sum_{k\delta\leq n\leq
 kt}\frac{1}{\sqrt{n}^{1+d}} \cdot C\sqrt{k}^d\\
 &\leq 2\varepsilon + C \delta^{-(d-1)/2}\cdot |x-y|.
 \endaligned
$$
(In the second inequality we used the fact that $\sum_z
|\psi(z/\sqrt{k})|\leq C\sqrt{k}^d$; this holds because $\psi$ is
bounded and has compact support.)  Finally, for all $x$ and all
$\delta\leq s< t$,
$$\aligned
  &|\Psi^k_t(x)-\Psi^k_s(x)|\\
   &\leq M \sum_{ks\leq n\leq kt} \sum_z
    P_n(\sqrt{k}x-z)/k\\
   &\leq M (t-s).
   \endaligned
$$
\end{proof}

\begin{proof}[Proof of Proposition \ref{prop:mu_smooth}]
For any $\psi\in C_c(\zz{R}^d)$, by Lemma \ref{lemma:green_conv},
$\Psi^k_t(x)$ converge to $[(q_{\sigma^2 t} *\psi)/\sigma^2](x)$
in the local uniform topology. Therefore,
$$
  \aligned \sum_x (\mu^kG_{kt})(\sqrt{k}x)/k^2\cdot \psi(x)
  &=\frac{1}{k}\sum_y \mu^k(\sqrt{k}y)\cdot \Psi^k_t(y)\\
  &\to \la \mu, [(q_{\sigma^2 t} *\psi)/\sigma^2]\ra
   \qquad (\because \mu^k(\sqrt{k}\cdot)/k \Rightarrow \mu\in M_F(\zz{R}^d))\\
  &=\la [(q_{\sigma^2 t} *\mu)/\sigma^2], \psi\ra.
  \endaligned
$$
On the other hand, if we can show that
$(\mu^kG_{kt})(\sqrt{k}x)/k^{2-d/2}$ is relatively compact in
$C(\zz{R}^{1+d})$, then for any limit $F(t,x)$,
$$
  \sum_x (\mu^kG_{kt})(\sqrt{k}x)/k^2 \psi(x)
  = \sum_x (\mu^kG_{kt})(\sqrt{k}x)/k^{2-d/2}\cdot \psi(x)
  \cdot 1/\sqrt{k}^d \to \int_x F(t,x) \psi(x)dx.
$$
Hence, $\la [(q_{\sigma^2 t} *\mu)/\sigma^2], \psi\ra = \int_x
F(t,x) \psi(x)dx$, which implies that (1) the measure
$[(q_{\sigma^2 t} *\mu)/\sigma^2]$ has density, and (2)
$(\mu^kG_{kt})(\sqrt{k}x)/k^{2-d/2}$ converge to $[(q_{\sigma^2 t}
*~\mu)/\sigma^2](x) $ in $C(\zz{R}^{1+d})$.

Now we show that \eqref{ass:ini_spread} implies that
$(\mu^kG_{kt})(\sqrt{k}x)/k^{2-d/2}$ is relatively compact in
$C(\zz{R}^{1+d})$, by verifying the Ascoli-Arzela criterion. We
first show that they are uniformly bounded on any compact set in
$\zz{R}^{1+d}$. In fact, by (\ref{ass:ini_spread}), there exists
$\delta>0$ such that
$$\sup_k\max_x (\mu^kG_{k\delta})(\sqrt{k}x)/k^{2-d/2} \leq 1; $$
moreover, for all $t\geq \delta $ and  all $x,$
$$
  \aligned
   & (\mu^kG_{kt})(\sqrt{k}x)/k^{2-d/2}\\
  &\leq 1 + \sum_{k\delta\leq n\leq kt} \sum_z \mu^k(z)P_n(\sqrt{k}x -z)/k^{2-d/2}\\
  &\leq 1 + \sum_{k\delta\leq n\leq kt}  C/n^{d/2} \cdot k /k^{2-d/2}\\
  &\leq C=C(t),
  \endaligned
$$
where in the second inequality we used the facts that there exists
$C>0$ such that for all $n$ and all  $x\in\zz{Z}^d$, $P_n(x)\leq
C/n^{d/2}$ (\hbox{cf.} \cite{spitzer76}, Proposition 6 on p72),
and that the total number of particles $\sum_z \mu^k(z)=O(k)$.

 Next we show they are equi-continuous. In fact, for any
$\varepsilon >0,$ by (\ref{ass:ini_spread}), there exists $\delta
>0$ such that
$$\sup_k\max_x (\mu^kG_{k\delta})(\sqrt{k}x)/k^{2-d/2} \leq \varepsilon; $$
therefore, for all $s,t\leq \delta $ and all $x,y,$
$$
  \sup_k |(\mu^kG_{ks})(\sqrt{k}x)/k^{2-d/2}- (\mu^kG_{kt})(\sqrt{k}y)/k^{2-d/2}| \leq 2\varepsilon;
$$
 moreover, for all $t\geq \delta$ and all $x\neq y$, by (\ref{eqn:p_diff}),
$$
  \aligned &|(\mu^kG_{kt})(\sqrt{k}x)/k^{2-d/2}- (\mu^kG_{kt})(\sqrt{k}y)/k^{2-d/2}|\\
   &\leq 2\varepsilon + \frac{1}{k^{2-d/2}}\sum_{k\delta\leq n\leq
   kt} \sum_z \mu^k(z)
  |P_n(\sqrt{k}x-z)-P_n(\sqrt{k}y-z) |\\
   &\leq 2\varepsilon + \frac{C\sqrt{k}|x-y|}{k^{2-d/2}}\sum_{k\delta\leq n\leq kt} \sum_z
     \mu^k(z) \frac{1}{\sqrt{n}} \Phi_n(\beta(\sqrt{k}x -
    z),\beta(\sqrt{k}y - z))\\
   &\leq 2\varepsilon + \frac{C\sqrt{k}|x-y|}{k^{2-d/2}}\sum_{k\delta \leq n\leq kt} \frac{1}{\sqrt{n}^{1+d}}\cdot k\\
   &\leq 2\varepsilon + C\delta^{-(d-1)/2}|x-y|;
  \endaligned
$$
and for all $x$ and all $\delta\leq s< t$,
$$\aligned
  &|(\mu^kG_{kt})(\sqrt{k}x)/k^{2-d/2}-
   (\mu^kG_{ks})(\sqrt{k}x)/k^{2-d/2}|\\
   &= \frac{1}{k^{2-d/2}}\sum_{ks\leq n\leq kt} \sum_z \mu^k(z)
    P_n(\sqrt{k}x-z)\\
   &\leq \frac{C}{k^{2-d/2}}\sum_{ks\leq n\leq kt} k/n^{d/2} \\
   &\leq \left\{\aligned &C\log (t/s)\leq C(t-s)/\delta, &\text{ if } d=2; \\
   &C(1/\sqrt{s} - 1/\sqrt{t})\leq C (t-s)/\delta^{3/2}, &\text { if } d=3. \endaligned          \right.
\endaligned
$$

We have therefore proved that (\ref{ass:ini_spread}) implies the
relative compactness of $(\mu^kG_{kt})(\sqrt{k}x)/k^{2-d/2}$. Next
we show that any of the conditions in (i)$\sim$(iii) implies
(\ref{ass:ini_spread}).

\medskip\noindent
(i) For all $x\in \zz{R}^2$ and all $ t\geq 0$,
$$\aligned
   (\mu^kG_{kt})(x)/k
   &= \frac{1}{k}\sum_{ n\leq kt} \sum_z \mu^k(z)P_n(x-z)\\
   &\leq \frac{C}{k}\sum_{n\leq kt} 1 =Ct,
\endaligned
$$
therefore (\ref{ass:ini_spread}) holds.

\medskip\noindent
(ii) In order to verify (\ref{ass:ini_spread}),  by
(\ref{eqn:lclt_bd}), it suffices to show that
\begin{equation}\label{ass:ini_3d}
  \lim_{t\to 0}\sup_k \max_x \sum_{n\leq kt}\sum_y
   \mu^k(y) \phi_n(\beta(x-y)) / \sqrt{k} = 0.
\end{equation}

\begin{claim}There exists $ C_3>0$ such that for all $k,n$ and all
pairs $x,y\in\zz{Z}^d$ with $|x-y|\geq C_1   k^{1/6}$,
$$
   \sqrt{k}\phi_n(\beta(x-y))\leq C_3\sum_{z\in B(y, C_1 k^{1/6})}
   \phi_n(\beta(x-z)).
$$
\end{claim}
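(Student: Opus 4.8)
The plan is geometric: inside the ball $B(y,C_{1}k^{1/6})$ I will locate a sub-ball of half the radius, displaced toward $x$, on which the Gauss kernel $\phi_{n}(\beta(x-\cdot))$ is everywhere at least as large as $\phi_{n}(\beta(x-y))$, and then count the lattice points it contains. Write $r=C_{1}k^{1/6}$ and $R=|x-y|\ge r$, let $\hat u=(x-y)/R$, and set
\[
   B'=B\left(y+\tfrac{r}{2}\,\hat u,\ \tfrac{r}{2}\right).
\]

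First I would record two one-line triangle-inequality facts. (a) $B'\subseteq B(y,r)$: if $|\zeta-(y+\tfrac{r}{2}\hat u)|\le\tfrac{r}{2}$ then $|\zeta-y|\le\tfrac{r}{2}+\tfrac{r}{2}=r$. (b) Every $\zeta\in B'$ has $|x-\zeta|\le R$: since $x-y=R\hat u$ one has $|x-(y+\tfrac{r}{2}\hat u)|=R-\tfrac{r}{2}$ (this is where the hypothesis $|x-y|\ge C_{1}k^{1/6}$ is used, to guarantee $R\ge\tfrac{r}{2}$), hence $|x-\zeta|\le(R-\tfrac{r}{2})+\tfrac{r}{2}=R$. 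Since $\phi_{n}(w)$ is a decreasing function of $|w|$, fact (b) gives $\phi_{n}(\beta(x-z))\ge\phi_{n}(\beta(x-y))$ for every lattice point $z\in B'\cap\zz{Z}^{d}$.

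Next I would bound $\#(B'\cap\zz{Z}^{d})$ from below: a ball of radius $\rho$ in $\zz{R}^{d}$ contains the axis-parallel cube of side $2\rho/\sqrt d$ about its center, and once $\rho\ge\sqrt d$ that cube contains at least $d^{-d/2}\rho^{d}$ points of $\zz{Z}^{d}$, uniformly in the center. Applying this with $\rho=r/2$ — legitimate as soon as $C_{1}k^{1/6}\ge 2\sqrt d$, the finitely many smaller $k$ being trivial since the center $y$ already lies in the sum and $\sqrt k$ is then bounded — we obtain $\#(B'\cap\zz{Z}^{d})\ge c_{d}\,r^{d}=c_{d}C_{1}^{d}k^{d/6}$ with $c_{d}=(2\sqrt d)^{-d}$. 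Combining with fact (b),
\[
   \sum_{z\in B(y,r)}\phi_{n}(\beta(x-z))\ \ge\ \sum_{z\in B'\cap\zz{Z}^{d}}\phi_{n}(\beta(x-z))\ \ge\ c_{d}C_{1}^{d}k^{d/6}\,\phi_{n}(\beta(x-y)),
\]
and because $d=3$ (more generally $d\ge 3$) one has $k^{d/6}\ge\sqrt k$; this is the asserted inequality with $C_{3}=1/(c_{d}C_{1}^{d})$, enlarged if necessary to cover the small-$k$ cases.

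I do not expect a genuine obstacle. The two points needing a little care are: placing $B'$ so that it lies inside $B(y,r)$ while moving every one of its points no farther from $x$ than $y$ is — this is exactly what forces the use of $|x-y|\ge C_{1}k^{1/6}$, since for $R<r/2$ the displaced ball would overshoot $x$ and fact (b) would fail — and the exponent bookkeeping: the sub-ball contributes $\asymp k^{d/6}$ lattice points, and $k^{d/6}\ge k^{1/2}$ precisely when $d\ge 3$, which is why the statement is the three-dimensional one (it would be false as stated in $d=2$).
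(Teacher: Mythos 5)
Your proof is correct and takes essentially the same route as the paper: both arguments reduce the claim to the observation that a fixed positive proportion of the $\asymp k^{d/6}=\sqrt{k}$ (for $d=3$) lattice points of $B(y,C_1k^{1/6})$ satisfy $|x-z|\le |x-y|$, so that their Gaussian weights each dominate $\phi_n(\beta(x-y))$. Your displaced half-radius sub-ball simply makes explicit the ``positive proportion of integer points, independent of $k,x,y$'' that the paper asserts without construction.
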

In fact, the above inequality is equivalent to
$$
  \sqrt{k}\leq C_3 \sum_{|z-y|\leq C_1 k^{1/6}}
  \exp\left(\frac{\beta^2 (|x-y|^2-|x-z|^2)}{n}\right),\, \forall\, |x-y|\geq C_1   k^{1/6}.
$$
But this holds trivially since when $x\notin B(y, C_1 k^{1/6})$,
there is a positive proportion of integer points~$z$ in the ball
$B(y, C_1 k^{1/6})$  such that $|x-y|\geq |x-z|,$ and the
proportion does not depend on $k, x, y.$ Now let us estimate
$\sum_{n\leq kt}\sum_y \mu^k(y) \phi_n(\beta(x-y))/\sqrt{k}$: For
any fixed $k$ and $x$, this sum can be written as the sum of the
following two terms:
$$
   I:=\sum_{n\leq kt}\sum_{|y-x|\leq  C_1 k^{1/6}} \mu^k(y)\phi_n(\beta(x-y)) /
   \sqrt{k},
$$
and
$$
  II:=\sum_{n\leq kt}\sum_{|y-x|>  C_1 k^{1/6}} \mu^k(y)\phi_n(\beta(x-y)) /
  \sqrt{k}.
$$
As to term $I$, we have
$$ \aligned
  I
  &\leq \sum_{n\leq kt} \sum_{|y-x|\leq  C_1 k^{1/6}} \mu^k(y)\cdot
  C/n^{3/2}/\sqrt{k}\\
  &\leq \sum_{n\leq kt} C_2 \cdot
  C/n^{3/2}/\sqrt{k}\\
  &\leq C/\sqrt{k},
  \endaligned.
$$
where in the second inequality we used (\ref{ass:ini_spread_3d}).
 And by the claim and (\ref{ass:ini_spread_3d}),
$$
  \aligned
  II & \leq \sum_{n\leq kt}\sum_{|y-x|>  C_1 k^{1/6}} \mu^k(y)/
  \sqrt{k}\cdot C_3\sum_{z\in B(y, C_1 k^{1/6})}
   \phi_n(\beta(x-z))/\sqrt{k}\\
   &\leq C\sum_{n\leq kt}\sum_z \phi_n(\beta(x-z))\cdot \sum_{y\in B(z, C_1
   k^{1/6})} \mu^k(y)/
  k\\
  &\leq  \sum_{n\leq kt}C/ k\\
  &\leq Ct. \endaligned
$$
Therefore (\ref{ass:ini_3d}) holds.

\medskip\noindent
(iii) In order to verify (\ref{ass:ini_spread}), by
(\ref{eqn:lclt_bd}), it suffices to show that
\begin{equation}\label{ass:ini_2d}
  \lim_{t\to 0}\sup_k \max_x \sum_{n\leq kt}\sum_y \mu^k(y) \phi_n(\beta(x-y)) / k = 0.
\end{equation}
By assumption, $\mu^k(y)$ is a decreasing function of $|y|$; so is
$\phi_n(\beta y)$. Therefore, by Lemma \ref{lemma:fg_central}
below, the last term is bounded by $\sum_{n\leq kt} \sum_y
\mu^k(y) \phi_n(\beta y)/k$, which, by assumption
(\ref{ass:ini_spread_2d_spike}), can be further bounded by
$$\aligned
  &C\sum_{n\leq kt} \frac{1}{n\cdot k} \sum_y \left(\sqrt{\frac{k}{|y|^2+1}}\right)^\alpha
   e^{\frac{-\beta^2 |y|^2/k}{2n/k}} \\
   &\leq C\sum_{n\leq kt} \frac{1}{n\cdot k}\cdot k
     \int_{x\in \zz{R}^2} |x|^{-\alpha} e^{\frac{-\beta^2 |x|^2}{2n/k}}dx\\
   &\leq C \sum_{n\leq kt} \frac{1}{n}
     (n/k)^{-\alpha/2 + 1} \int_{x\in \zz{R}^2} |x|^{-\alpha} e^{-\beta^2 |x|^2/2}dx \\
   &\leq C \frac{1}{k}\cdot \sum_{n\leq kt} \left(\frac{n}{k}\right)^{\!-\alpha/2}\\
   &\leq C \int_0^t s^{-\alpha/2}\,ds\\
   &=O(t),
\endaligned$$
where the third inequality and the last equation hold because
$\alpha <2$ by assumption.
\end{proof}

\begin{remark}
In dimension 2, if the assumption in (iii) is satisfied, then the
radius of the support of $\mu^k$ will be of order $\sqrt{k}$. This
is because we need $\sum_y \mu^k(y)=O(k)$, hence for some $C>0$,
$$
  \sum_{y \in\text{Supp}(\mu^k)}\left(\sqrt{k/(|y|^2+1)}\right)^\alpha
  \geq C k,
$$
\hbox{i.e.},
$$
   \sum_{y \in \text{Supp}(\mu^k)} \left(1/\sqrt{|y|^2+1}\right)^\alpha\geq C k^{1-\alpha/2}.
$$
But for any $r,$
$$\aligned
  \sum_{|y|\leq r }\left(1/\sqrt{|y|^2+1}\right)^\alpha
  &=O\left(\int_{|y|\leq r}\left(1/\sqrt{|y|^2+1}\right)^\alpha \,dy
    \right)\\
  &=O\left(\int_0^r \left(1/\sqrt{s^2+1}\right)^\alpha\cdot s\,
     ds\right)\\
  &=O(r^{2-\alpha}),
\endaligned$$
so in order that $O(r^{2-\alpha})\geq C k^{1-\alpha/2}$, we need
$r=O(k^{1/2})$.
\end{remark}

\begin{remark}
In dimension 3, if $\mu^k$ is such that $\mu^k(y)$ is a decreasing
function in $|y|$,
  and there exists  $\alpha\in(0,2) $ such that
\begin{equation}\label{ass:ini_conc_3d}
   \mu^k(y) \leq C\left(\sqrt{k/(|y|^2+1)}\right)^\alpha\!/\sqrt{k},\enspace
     \forall\, y, k,
\end{equation}
 then using the similar proof as in (iii) we can show that
 (\ref{ass:ini_spread}) holds. But in fact,
 (\ref{ass:ini_conc_3d}) can never be satisfied. The reason is
 that in order $\mu^k(y)\geq 1$, we must have that for some $C>0$,
$$
    1\leq C \left(\sqrt{k/(|y|^2+1)}\right)^\alpha/\sqrt{k},
$$
which implies that
$$
   |y|\leq C k^{1/2-1/(2\alpha)}=o(k^{1/4}).
$$
On the other hand, we need $\sum_y \mu^k(y)=O(k)$, therefore for
some $C>0$,
$$
   \sum_{|y|=o(k^{1/4})}\left(\sqrt{k/(|y|^2+1)}\right)^\alpha/\sqrt{k}\geq C k,
$$
or
\begin{equation}\label{eqn:3d_im}
   \sum_{|y|=o(k^{1/4})} \left(1/\sqrt{|y|^2+1}\right)^\alpha \geq
   Ck^{(3-\alpha)/2}.
\end{equation}
However,
$$\aligned
   \sum_{|y|=o(k^{1/4})}\left(1/\sqrt{|y|^2+1}\right)^\alpha
   &=O\left(\int_{|y|=o(k^{1/4})} \left(1/\sqrt{|y|^2+1}\right)^\alpha \,dy\right)\\
   &=O\left(\int_0^{o(k^{1/4})} \left(1/\sqrt{r^2+1}\right)^\alpha
          \cdot r^2\, dr \right)\\
   &=o(k^{(3-\alpha)/4})=o(k^{(3-\alpha)/2}),
\endaligned$$
contradiction with (\ref{eqn:3d_im}).
\end{remark}

\begin{lemma}\label{lemma:fg_central}
Suppose that $f$ and $g$ are two nonnegative functions on
$\zz{Z}^d$, and $f$ has compact support. Suppose further that both
$f(x)$ and $g(x)$ are decreasing functions in $|x|$, then
\begin{equation}
   \label{ie:fg_central}\sum_y f(y) g(x-y) \leq \sum_y f(y) g(y),\,\forall\, x.
\end{equation}
\end{lemma}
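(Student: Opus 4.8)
The plan is to recognize Lemma~\ref{lemma:fg_central} as a discrete, degenerate instance of the Riesz rearrangement inequality: the convolution of two functions that are already radially nonincreasing is maximized at the origin. Since $f$ and $g$ are themselves radially arranged, a short self-contained argument via the layer-cake representation suffices, and there is no need to invoke the general rearrangement machinery.

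First I would write the layer-cake decompositions $f=\int_0^\infty \mathbf{1}_{A_s}\,ds$ and $g=\int_0^\infty \mathbf{1}_{B_t}\,dt$, where $A_s:=\{y\in\zz{Z}^d:f(y)>s\}$ and $B_t:=\{y\in\zz{Z}^d:g(y)>t\}$. Because $f$ has compact (hence finite) support, each $A_s$ is a finite subset of $\operatorname{supp}(f)$, so every sum below is finite. As all terms are nonnegative, Tonelli's theorem lets me interchange the sum over $y$ with the two integrals, giving
\[
 \sum_y f(y)\,g(x-y)=\int_0^\infty\!\!\int_0^\infty \bigl|A_s\cap(x-B_t)\bigr|\,ds\,dt,
\]
while the $x=0$ case gives $\sum_y f(y)\,g(y)=\int_0^\infty\!\int_0^\infty |A_s\cap B_t|\,ds\,dt$. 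Hence it suffices to prove, for each fixed $s,t$ and each $x$, the elementary bound $|A_s\cap(x-B_t)|\le|A_s\cap B_t|$.

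For this I would use two observations. (1) Each of $A_s$ and $B_t$ is a \emph{centered down-set}: if it contains $y$ then it contains every $y'$ with $|y'|\le|y|$, since $f$ and $g$ are nonincreasing in $|y|$. (2) Any two centered down-sets are nested: if there were $a\in A_s\setminus B_t$, then (as $B_t$ is a down-set) every element of $B_t$ must have norm strictly less than $|a|$; symmetrically, if there were $b\in B_t\setminus A_s$, every element of $A_s$ must have norm strictly less than $|b|$; these cannot both hold, so $A_s\subseteq B_t$ or $B_t\subseteq A_s$. In the first case $A_s\cap B_t=A_s$, so $|A_s\cap(x-B_t)|\le|A_s|=|A_s\cap B_t|$; in the second case $B_t$ is finite, $A_s\cap B_t=B_t$, and $|A_s\cap(x-B_t)|\le|x-B_t|=|B_t|=|A_s\cap B_t|$. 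Either way the termwise bound holds, and integrating it over $(s,t)$ completes the proof.

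There is no genuinely hard step: the only points requiring a little care are the nestedness of the superlevel sets on the integer lattice (observation (2)) and the verification that every sum appearing in the layer-cake manipulation is finite, which is precisely what the compact support of $f$ provides. I note in passing that the full strength of the hypothesis is not used — radial monotonicity of $g$ is enough, and its evenness in $x$ plays no role.
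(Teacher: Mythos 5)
Your proof is correct, and it takes a genuinely different route from the paper's. The paper enumerates the positive values of $f$ as $a_1\geq\dots\geq a_n$ and the values of $g$ as $b_1\geq b_2\geq\dots$, observes that $\sum_y f(y)g(x-y)$ is a pairing $\sum_k a_k b_{i_k}$ with distinct indices $i_k$, and reduces the lemma to the discrete rearrangement inequality $\sup_{i_1,\dots,i_n}\sum_k a_k b_{i_k}=\sum_k a_k b_k$ (stated there as ``easily seen to be true''); implicit in that reduction is the fact that radial monotonicity of \emph{both} $f$ and $g$ aligns the two enumerations so that $\sum_y f(y)g(y)$ really is the extremal pairing $\sum_k a_k b_k$. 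You instead decompose ``vertically'' via the layer-cake formula, reducing everything to the termwise bound $|A_s\cap(x-B_t)|\leq|A_s\cap B_t|$ for the superlevel sets, which you obtain from the observation that two centered down-sets in $\zz{Z}^d$ are always nested. Both arguments exploit the same structural fact, but yours isolates the use of radial monotonicity in one clean combinatorial statement (nestedness) and makes the final step trivial, whereas the paper's leaves the rearrangement inequality and the alignment of the orderings to the reader; your version is the more self-contained of the two. All the supporting details check out: the Tonelli interchange is justified by nonnegativity, $A_s\subseteq\operatorname{supp}(f)$ is finite so every quantity is finite, and the nestedness argument is airtight (if $a\in A_s\setminus B_t$ and $b\in B_t\setminus A_s$ existed, the down-set property would force $|b|<|a|$ and $|a|<|b|$ simultaneously).
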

\begin{proof}
Since $f(x)$ is a decreasing functions in $|x|$ and has compact
support, we can enumerate its positive values, say $a_1\geq
\ldots\geq a_n>0$. We can also enumerate the values of $g$, say
$b_1\geq \ldots b_n \geq \ldots.$ To show (\ref{ie:fg_central}),
it then suffices to show that
$$
   \sup_{i_1,\ldots, i_n} \sum_{k=1}^n a_k b_{i_k} = \sum_{k=1}^n a_k b_k.
$$
But this is easily seen to be true.
\end{proof}

\section{Proof of Theorem \ref{thm:SIR}: Spatial epidemics in
Dimensions $d=2,3$} \label{sec:SIR}

\subsection{Strategy}\label{ssec:strategySIR} The strategy is the same
as that used by \cite{lalley07} in the $1-$dimensional case: Since the
law of the \emph{SIR-}$d$ epidemic with village size $N$ is absolutely
continuous relative to that of its branching envelope, and since the
branching envelopes converge weakly, after renormalization, to
super-Brownian motion, it suffices to prove that the likelihood ratios
converge weakly to the likelihood ratio \eqref{eq:RN} of the
appropriate Dawson-Watanabe process relative to super-Brownian motion.
The one- and higher-dimensional cases differ only in the behavior of
the occupation statistics that enter into the likelihood ratios.

\subsection{Modified \emph{SIR}-$d$ epidemic}\label{ssec:collisions}
As in the one-dimensional case, it is technically easier to work
with the likelihood ratio for a modification of the \emph{SIR}-
$d$ epidemic.  Recall that (a) when an infected individual
attempts to infect a recovered individual in an \emph{SIR}
epidemic,  the attempt fails; and (b) when two (or more) infected
individuals simultaneously attempt to infect the same susceptible
individual, all but one of the attempts fail. Call an occurrence
of type (a) an \emph{errant attempt}, and an occurrence of type
(b) a \emph{collision}. In the modified \emph{SIR} epidemic,
collisions are not allowed, and there can be at most one errant
attempt at any site/time. A formal specification of the modified
\emph{SIR} epidemic uses a variation of the standard coupling
described in section~\ref{ssec:bre}, as follows:

\medskip \noindent \textbf{Modified Standard Coupling:} Particles are
colored \emph{red} or \emph{blue}; red particles represent
infected individuals in the modified \emph{SIR} epidemic. Each
particle produces a random number of offspring, according to the
Poisson($1$) distribution, which then randomly move to neighboring
sites. Once situated, these offspring are assigned colors
according to the following rules:
\begin{itemize}
\item [(A)] Offspring of blue particles are  blue; offspring of red
particles may be either red or blue.
\item [(B)] At any site/time $(x,t)$ there is at most one blue
offspring of a red parent.
\item [(C)] Given that at site $x$ and time $t$ there are $y$ offspring
of red parents, the conditional probability
$\kappa_{N}(y)=\kappa_{N,t,x}(y)$ that one of them is blue is
\begin{align}\label{eq:kappa}
    \kappa_{N} (y)&=\{yR/N \}\wedge 1, \quad \text{where}\\
\label{eq:numberTagsUsed}
        R&=R^{N}_{t} (x) =\sum_{s<t}Y^{N}_{s} (x)
\end{align}
and $Y^{N}_{t} (x)$ is the number of \emph{red} particles at site $x$
in generation $t$. (Thus, $R=R^{N}_{t} (x)$ is the number of recovered
individuals at site $x$ at time $t$.) The red particle process is the
\emph{modified SIR epidemic}.
\end{itemize}

\begin{prop}\label{proposition:modifiedSIR}
For each $N\geq 1$, versions of the \emph{SIR} epidemic and the
\emph{modified} \emph{SIR} epidemic can be constructed on a common
probability space in such a way that (i) the initial configurations
$\mu^{N}$ of infected individuals are identical, and satisfy the
hypothesis \eqref{hyp:initialConfig} of Theorem~\ref{thm:SIR}; and
(ii) the discrepancy $D_{t} (x)$ between the two processes at
site $x$ and time $t$ (that is, the absolute difference in number of
infected individuals) satisfies
\begin{equation}\label{eq:discrepancy}
    \max_{t,x} D_{t} (x)=o_{P} (N^{\alpha}).
\end{equation}
\end{prop}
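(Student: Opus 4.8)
The plan is to build both epidemics as two colorings of one branching envelope, to dominate the discrepancy pathwise by a critical branching random walk with immigration, and to show that the total immigration --- a genuinely second-order quantity --- is of smaller order than $N^{\alpha}$.

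On a common probability space I would run the nearest-neighbor Poisson($1$) envelope $Z_{t}$ from the shared configuration $\mu^{N}$, attaching to each offspring an independent uniform tag in $[N]$ and an independent auxiliary uniform. Feeding these into the standard coupling of \S\ref{ssec:bre} gives the true \emph{SIR} epidemic (red population $Y^{\mathrm o}$); feeding them into the Modified Standard Coupling of \S\ref{ssec:collisions} gives the modified epidemic (red population $Y^{\mathrm m}$). Since the tags already drive both, the only freedom is the matching of the two coloring rules, which I would fix greedily, generation by generation: offspring descending from particles red in both epidemics are paired off, and the single possible blue particle of the modified scheme at $(x,t)$ is declared to occur precisely when some paired red-parent offspring at $(x,t)$ draws a tag already used at $x$ (an \emph{errant attempt}), the auxiliary uniform absorbing the small gap between $\{Y^{\mathrm o}_{t}(x)R^{N}_{t}(x)/N\}\wedge 1$ and $1-(1-R^{N}_{t}(x)/N)^{Y^{\mathrm o}_{t}(x)}$; on an event of probability $\to 1$ one has $Y_{t}(x)R^{N}_{t}(x)/N<1$ throughout (by the bounds below), and on that event these are the relevant probabilities. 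With this coupling the number $I_{t}(x)$ of \emph{freshly} discrepant particles at $(x,t)$ is at most (errant attempts in excess of one) $+$ (collision excess $\sum_{j}(\mathrm{mult}_{j}-1)_{+}$ over tags $j$ drawn at least twice) $+\,1$ for the coupling gap.

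The domination $D_{t}(x)\le W_{t}(x)$ then holds pathwise, where $W$ is the critical nearest-neighbor Poisson($1$) branching random walk with $W_{0}=0$ receiving $I_{t}(x)$ immigrants at $(x,t)$: every particle colored differently in the two epidemics has all of its descendants colored differently, so those descendants evolve by exactly the envelope dynamics and new discrepancies enter only through $I$. Conditionally on the generation-$t$ configuration,
\[
   E\big[\,I_{t}(x)\mid\mathcal G_{t}\,\big]
   \;\le\; C\Big(\frac{Y_{t}(x)^{2}R^{N}_{t}(x)^{2}}{N^{2}}+\frac{Y_{t}(x)^{2}}{N}\Big),
\]
the first term bounding the expected errant-attempt excess together with the coupling gap, the second the expected collision excess; here $R^{N}_{t}(x)$ may be taken to be the recovered count of either epidemic, the two differing only by $\sum_{s<t}D_{s}(x)\le\sum_{s<t}W_{s}(x)$, which is reabsorbed by a short bootstrap.

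Summing over $\{t\le CN^{\alpha},x\}$ (it suffices to stop at the extinction time, which is $O_{P}(N^{\alpha})$), using $Y_{t}(x)\le Z_{t}(x)$, the second-moment identity for critical branching random walk ($E\big[\sum_{t\le CN^{\alpha},\,x}Z_{t}(x)^{2}\big]=O(N^{2\alpha}\log N)$), and Theorem~\ref{thm:lt} with the finite-range property (so that $\max_{t\le CN^{\alpha},x}R^{N}_{t}(x)\le\max_{t,x}R^{Z}_{t}(x)=O_{P}(N^{\alpha(2-d/2)})$, the rescaled occupation field converging uniformly on compacts to the compactly supported continuous process $L_{t}(x)$), one obtains that the total immigration $K:=\sum_{t,x}I_{t}(x)$ is $O_{P}(N^{2\alpha-1}\log N)=o_{P}(N^{\alpha})$ --- the constraint $\alpha\le 1/(3-d/2)$ leaves ample room here, only $\alpha<1$ being needed. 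Finally, since occupation is monotone in both the starting population and the time horizon, the occupation field of $W$ up to time $CN^{\alpha}$ is dominated by that of a branching random walk started from the $K$ immigrant particles, and $\max_{m<CN^{\alpha},x}W_{m}(x)\le\max_{x}\sum_{m<CN^{\alpha}}W_{m}(x)$; so a maximal estimate of the type established in \S\ref{ssec:proofThLLT} --- Theorem~\ref{thm:lt} applied, after a routine smoothing of the immigrant configuration, with $K$ in the role of the initial mass, and using $2-d/2\le 1$ for $d=2,3$ --- yields $\max_{t,x}W_{t}(x)=o_{P}(N^{\alpha})$, which is \eqref{eq:discrepancy}. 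The two delicate points are the coupling itself, which must be tight enough that no first-order errant-attempt term survives in $I_{t}(x)$ (so that the displayed bound is genuinely quadratic in the tag density), and the maximal estimate for the occupation field of $W$, which in $d=2$ must be carried out with care since there the occupation scale $N^{\alpha(2-d/2)}$ coincides with $N^{\alpha}$ and logarithmic losses cannot be afforded freely.
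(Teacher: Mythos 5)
Your construction and your core estimate coincide with the paper's: the paper also couples the two epidemics through the common branching envelope, dominates the discrepancy by a critical branching random walk with immigration at the discrepancy-creating events, and controls the total immigration exactly as you do --- via the second-moment identity of Lemma~\ref{lemma:un2} for the collision term and via Theorem~\ref{thm:lt} (applied to the \emph{envelope}, whose initial configuration satisfies \eqref{hyp:smoothness}) to get $\max_x R^N_{N^{\alpha}t}(x)=O_P(N^{\alpha(2-d/2)})$ for the errant-attempt term. This is precisely the content of Lemma~\ref{lemma:collisions}, and your arithmetic reproducing \eqref{eqn:xn2_2d}--\eqref{eqn:xn2_3d} and the conclusion $K=o_P(N^{\alpha})$ is correct (though the single expression $O_P(N^{2\alpha-1}\log N)$ understates the errant-attempt contribution, which at $\alpha=1/(3-d/2)$ is only $O_P(\log N)$ by a cancellation of exponents, not by being dominated by $N^{2\alpha-1}\log N$).

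The genuine gap is in your final step, converting $K=o_P(N^{\alpha})$ into $\max_{t,x}W_t(x)=o_P(N^{\alpha})$. You bound $\max_{t,x}W_t(x)$ by $\max_x\sum_m W_m(x)$ and then invoke Theorem~\ref{thm:lt} with ``$K$ in the role of the initial mass, after a routine smoothing of the immigrant configuration.'' Theorem~\ref{thm:lt} requires the initial configurations to satisfy hypothesis \eqref{ass:lt_smooth}, which is an anti-concentration condition (see Remark~\ref{rmk:mu_smooth} and Proposition~\ref{prop:mu_smooth}); the immigrant particles arrive at random times at whatever sites the collisions happen to occur, and nothing prevents them from piling up on a handful of sites, so \eqref{ass:lt_smooth} cannot be verified for this configuration and there is no legitimate ``smoothing'' --- the immigrants are where they are, and relocating them changes the dominating process. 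Moreover in $d=2$ the target scale $K^{2-d/2}=K$ leaves no room to absorb any loss from a failed hypothesis. The correct (and much simpler) route, which is what the argument of Proposition~3 of \cite{lalley07} that the paper defers to actually uses, is to give up spatial resolution entirely: $D_t(x)\le\sum_y W_t(y)=:M_t$, and, conditionally on the immigration stream, $M_t+\#\{\text{immigrants not yet arrived by time }t\}$ is a nonnegative martingale started at $K$, so Doob's inequality gives $P(\max_t M_t\ge\lambda\mid K)\le K/\lambda$ and hence $\max_{t,x}D_t(x)=O_P(K)=o_P(N^{\alpha})$. No occupation-field maximal estimate, and no appeal to Theorem~\ref{thm:lt} for the immigrant configuration, is needed or available.
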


This implies that after Feller-rescaling, the \emph{SIR-}$d$ epidemic
and the modified \emph{SIR-}$d$ epidemic are
indistinguishable. Consequently, to prove Theorem~\ref{thm:SIR} it
suffices to prove the corresponding result for the modified epidemic.

\begin{proof}
[Proof of Proposition~\ref{proposition:modifiedSIR}] This is
essentially the same as the proof of Proposition~3 in \cite{lalley07},
except that different estimates for the numbers of collisions and
errant infection attempts in the \emph{SIR}-$d$ epidemic are
necessary. These are given in Lemma~\ref{lemma:collisions} below.
\end{proof}

\begin{lemma}\label{lemma:collisions}
For each pair $(n,x)\in \zz{N}\times \zz{Z}^{d}$, let
$\Gamma^{N}_{n} (x)$ and $A^{N}_{n} (x)$ be the number of
collisions and the number of errant infection attempts,
respectively, at site $x$ and time $n$ in the \emph{SIR-}$d$
epidemic with village size $N$. Assume that the hypotheses
\eqref{hyp:initialConfig}-\eqref{hyp:smoothness} of
Theorem~\ref{thm:SIR} are satisfied, for some $\alpha \leq 1/
(3-d/2)$. Then
\begin{equation}\label{eq:collisionEstimate}
    \sum_{n}\sum_{x} \left\{\Gamma^{N}_{n} (x) + (A^{N}_{n} (x)-1)_{+}
    \right\}
    =o_{P} (N^{\alpha}).
\end{equation}
\end{lemma}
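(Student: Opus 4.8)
The plan is to dominate the \emph{SIR}-$d$ epidemic by its branching envelope and reduce \eqref{eq:collisionEstimate} to two second-moment estimates for the envelope, which are then controlled by the transition-probability bounds of \S\ref{ssec:localLimitLaws} and by the smoothness hypothesis \eqref{hyp:smoothness}. Write $X_n(x)$ for the number of particles at $(x,n)$ in the branching envelope and $R_n(x):=\sum_{m<n}X_m(x)$ for its occupation counts; since the envelope dominates the epidemic, the number of offspring of infected particles at $(x,n)$ is at most $X_n(x)$ and the number of recovered individuals at $(x,n)$ is at most $R_n(x)$. Realize the standard coupling of \S\ref{ssec:bre} by attaching to each envelope particle an independent label, uniform on $[N]$, and let $\mathcal{X}$ be the $\sigma$-field generated by the branching envelope. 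Conditionally on $\mathcal{X}$ and the labels of all generations $<n$ — which determine, in particular, the set of already-used (``recovered'') labels at $(x,n)$, of size at most $R_n(x)$ — the labels of generation $n$ are i.i.d.\ uniform on $[N]$, so $A^N_n(x)$ is stochastically dominated by $\mathrm{Bin}(X_n(x),R_n(x)/N)$ and $\Gamma^N_n(x)$ is at most the number of colliding pairs among $X_n(x)$ uniform draws. Using $(a-1)_+\le\binom{a}{2}$, $E\binom{\mathrm{Bin}(m,p)}{2}=\binom{m}{2}p^2$, and the fact that $m$ uniform draws produce $\binom{m}{2}/N$ colliding pairs in expectation,
\begin{equation*}
  E\Big[\sum_n\sum_x\Gamma^N_n(x)\,\Big|\,\mathcal{X}\Big]\le\frac{1}{2N}\sum_n\sum_x X_n(x)^2,
  \qquad
  E\Big[\sum_n\sum_x(A^N_n(x)-1)_+\,\Big|\,\mathcal{X}\Big]\le\frac{1}{2N^2}\sum_n\sum_x X_n(x)^2R_n(x)^2 .
\end{equation*}

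Next I would estimate the envelope functionals on the right. By the second-moment formula for the critical branching random walk (offspring variance $1$), $\sum_x E^{\mu^N}[X_n(x)(X_n(x)-1)]=|\mu^N|\sum_{j=1}^{n}\|P_j\|_2^2+\sum_{i\ne j}P_{2n}(y_i-y_j)$, where $\{y_i\}$ enumerates the support of $\mu^N$ with multiplicity, and $\|P_j\|_2^2=P_{2j}(0)\asymp j^{-d/2}$ by \eqref{eqn:lclt_bd}. Adding $\sum_x E^{\mu^N}[X_n(x)]=|\mu^N|$ and summing over $n\le T:=CN^{\alpha}$,
\begin{equation*}
  E\Big[\sum_{n\le T}\sum_x X_n(x)^2\Big]\;\lesssim\;|\mu^N|\,T\;+\;|\mu^N|\sum_{n\le T}\sum_{j\le n}j^{-d/2}\;+\;\big\langle\mu^N,\mu^N * G_{2T+2}\big\rangle .
\end{equation*}
Here $|\mu^N|\asymp N^{\alpha}$ by \eqref{hyp:initialConfig}; the middle sum is $\lesssim T\log T$ for $d=2$ and $\lesssim T$ for $d=3$; and since the convergence in \eqref{hyp:smoothness} takes place in $C_b(\mathbb{R}^d)$ with a bounded limit, $\max_y(\mu^N G_{2T+2})(y)=O(N^{\alpha(2-d/2)})$, whence $\langle\mu^N,\mu^N * G_{2T+2}\rangle\le|\mu^N|\max_y(\mu^N G_{2T+2})(y)=O(N^{\alpha(3-d/2)})$. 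With $T\asymp N^{\alpha}$ this gives $E[\sum_{n\le T}\sum_x X_n(x)^2]=O(N^{2\alpha}\log N)$ for $d=2$ and $O(N^{2\alpha})$ for $d=3$; denote this bound by $B_N$.

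For the truncation, apply Theorem~\ref{thm:lt} to the branching envelope with $k=N^{\alpha}$ (its hypotheses are precisely \eqref{hyp:initialConfig}--\eqref{hyp:smoothness}): since $t\mapsto R_{kt}(\cdot)$ is increasing and $\sup_x L_t(x)<\infty$ a.s.\ (joint continuity and compact support of the limiting local-time density), this yields $\max_{n\le CN^{\alpha},x}R_n(x)=O_P(N^{\alpha(2-d/2)})$; also $P(\zeta^N>CN^{\alpha})=O(1/C)$ for the envelope's extinction time $\zeta^N$, since a critical branching process from $\asymp N^{\alpha}$ individuals is alive at generation $CN^{\alpha}$ with probability $O(1/C)$. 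Fix $C$ and set $\mathcal{A}_C:=\{\zeta^N\le CN^{\alpha}\}\cap\{\max_{n,x}R_n(x)\le CN^{\alpha(2-d/2)}\}\in\mathcal{X}$, so that $\liminf_N P(\mathcal{A}_C)\to1$ as $C\to\infty$. Writing $S^N:=\sum_n\sum_x\{\Gamma^N_n(x)+(A^N_n(x)-1)_+\}$ and noting that on $\mathcal{A}_C$ all sums run over $n\le CN^{\alpha}$ only, the conditional bounds above give $E[\mathbf{1}_{\mathcal{A}_C}S^N]\lesssim N^{-1}B_N+C^2N^{-2+2\alpha(2-d/2)}B_N$. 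Dividing by $N^{\alpha}$: the first term is $O(N^{\alpha-1}\log N)\to0$ since $\alpha<1$; the second is $O(N^{3\alpha-2}\log N)\to0$ when $d=2$ (as $\alpha\le\tfrac12$) and $O(N^{2\alpha-2})\to0$ when $d=3$ (as $\alpha\le\tfrac23$). A conditional Markov inequality then gives $P(\{S^N>\varepsilon N^{\alpha}\}\cap\mathcal{A}_C)\to0$ for each $\varepsilon>0$ and each fixed $C$, so $\limsup_N P(S^N>\varepsilon N^{\alpha})\le\limsup_N P(\mathcal{A}_C^c)$; letting $C\to\infty$ proves \eqref{eq:collisionEstimate}.

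The only real work is the estimate $B_N$ and, within it, the bound $\langle\mu^N,\mu^N * G_{2T+2}\rangle=O(N^{\alpha(3-d/2)})$ on the two-ancestor cross term $\sum_{i\ne j}\sum_{n\le T}P_{2n}(y_i-y_j)$: this is exactly where \eqref{hyp:smoothness} (equivalently the smoothness of $\mu^N G$ built into Theorem~\ref{thm:lt}) enters, and it cannot be replaced by the crude bound $\langle\mu^N,\mu^N*G_{2T}\rangle\le|\mu^N|^2\max G_{2T}$, which is far too large when $\mu^N$ is spatially concentrated — precisely the behaviour that \eqref{hyp:smoothness} rules out. The diagonal term needs only $\|P_j\|_2^2\asymp j^{-d/2}$, and the reduction to the envelope, the truncation at $CN^{\alpha}$, and the appeal to Theorem~\ref{thm:lt} for $\max_{n,x}R_n(x)$ are routine.
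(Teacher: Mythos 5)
Your proof is correct and follows essentially the same route as the paper: dominate the epidemic by its branching envelope, bound the conditional expectations of the collision and errant-attempt counts by $\sum X_n(x)^2/N$ and $\sum X_n(x)^2R_n(x)^2/N^2$ respectively, control $\sum_n\sum_x EX_n(x)^2$ by a diagonal (single-ancestor) plus cross (two-ancestor) decomposition, and truncate using the $O_P(N^\alpha)$ life length together with $\max_x R^N_{N^\alpha t}(x)=O_P(N^{\alpha(2-d/2)})$ from Theorem~\ref{thm:lt}. The two minor divergences: you derive the second moment from the standard formula directly where the paper cites Lemma~\ref{lemma:un2} (Proposition 28 of the companion paper), and you bound the cross term via the smoothness hypothesis \eqref{hyp:smoothness} as $\langle\mu^N,\mu^N*G_{2T}\rangle\le|\mu^N|\max_y(\mu^NG_{2T})(y)=O(N^{\alpha(3-d/2)})$. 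Your closing claim that the crude bound is unusable is, however, mistaken: the paper bounds the cross term at each time $n$ by $|\mu^N|^2\max_zP_{2n}(z)\le CN^{2\alpha}n^{-d/2}$ (via \eqref{eqn:lclt_bd}), which after summing over $n\le CN^\alpha$ gives exactly the same $O(N^{2\alpha}\log N)$ ($d=2$) and $O(N^{2\alpha})$ ($d=3$) totals as your diagonal term --- see \eqref{eqn:xn2_2d} and \eqref{eqn:xn2_3d} --- and this suffices since $\alpha\le 1/2$ (resp.\ $\alpha\le 2/3$). So \eqref{hyp:smoothness} is not actually needed for this lemma's cross term; it buys you a sharper exponent but nothing the argument requires.
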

The proof of this lemma makes use of the following result.
\begin{lemma}\label{lemma:un2}[Proposition 28 in \cite{lz07}] Denote by
$U_n(x)$ the number of particles at $x$ at time $n$ of a BRW
started by one particle at the origin, then
\begin{equation}\label{eqn:un2nd}
   EU_n(x)^2=P_n(x)+\sigma^2\sum_{i=0}^{n-1}\sum_z P_i(z)P_{n-i}^2(x-z),
\end{equation}
where $\sigma^2$ is the variance of the offspring distribution.
\end{lemma}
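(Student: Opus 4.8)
The plan is to condition on the first generation of the branching random walk and then iterate the resulting one-step recursion for the second moment. Write $m_n(x):=EU_n(x)$ and $s_n(x):=EU_n(x)^2$, and recall that the initial particle produces a random number $\xi$ of offspring with $E\xi=1$ and $\mathrm{Var}(\xi)=\sigma^2$, after which each offspring moves independently according to the one-step kernel $P_1$ and launches an independent copy of the process. By additivity of the branching random walk together with the convolution identity $P_1*P_{n-1}=P_n$, the first-moment recursion $m_n=P_1*m_{n-1}$ with $m_0=\delta_0$ yields $m_n=P_n$; this critical first-moment identity is also what makes $P_n$ the natural building block of the final answer.

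For the second moment I would decompose at the root. Let $z_1,\dots,z_\xi$ be the (conditionally i.i.d., $P_1$-distributed) displacements of the offspring and let $U_{n-1}^{(1)},\dots,U_{n-1}^{(\xi)}$ be the independent occupation counts of the subtrees they generate, so that
\[
 U_n(x)=\sum_{j=1}^{\xi}U_{n-1}^{(j)}(x-z_j)=:\sum_{j=1}^{\xi}V_j.
\]
Given $\xi$, the summands $V_j$ are i.i.d. with $EV_1=(P_1*m_{n-1})(x)=P_n(x)$ and $EV_1^2=(P_1*s_{n-1})(x)$, so the standard conditional mean/second-moment split gives $E[U_n(x)^2\mid\xi]=\xi\,EV_1^2+\xi(\xi-1)(EV_1)^2$. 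Averaging over $\xi$ and using $E\xi=1$ together with the key identity $E[\xi(\xi-1)]=\mathrm{Var}(\xi)+(E\xi)^2-E\xi=\sigma^2$ produces the recursion
\[
 s_n(x)=(P_1*s_{n-1})(x)+\sigma^2\,P_n(x)^2 .
\]
This is exactly the step where the offspring variance enters, and criticality $E\xi=1$ is what collapses $E[\xi(\xi-1)]$ to the clean value $\sigma^2$.

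Finally I would unroll the recursion. With base case $s_0=\delta_0=P_0$ and writing $(P_m)^2$ for the pointwise square $z\mapsto P_m(z)^2$,
\[
 s_n=P_1*s_{n-1}+\sigma^2(P_n)^2=\cdots=P_n*s_0+\sigma^2\sum_{i=0}^{n-1}P_i*(P_{n-i})^2 ,
\]
and since $P_n*\delta_0=P_n$, evaluating at $x$ gives $EU_n(x)^2=P_n(x)+\sigma^2\sum_{i=0}^{n-1}\sum_z P_i(z)P_{n-i}(x-z)^2$, which is the claimed identity. I do not expect a genuine obstacle: this is a routine second-moment (many-to-two) computation, and the only points requiring care are the conditional independence of the subtrees via the branching property, the bookkeeping of the convolution indices in the iteration, and the identification $E[\xi(\xi-1)]=\sigma^2$. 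As a cross-check one can read the formula genealogically: the diagonal term $P_n(x)$ counts single time-$n$ particles at $x$, while each term $\sigma^2\sum_z P_i(z)P_{n-i}(x-z)^2$ counts ordered pairs of distinct time-$n$ particles at $x$ whose most recent common ancestor sits at generation $i$ (expected occupation $P_i(z)$), contributes the factor $E[\xi(\xi-1)]=\sigma^2$ of sibling pairs, and then sends the two lineages independently from $z$ to $x$ over the remaining $n-i$ steps.
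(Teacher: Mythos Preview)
Your argument is correct: conditioning on the first generation, using the i.i.d.\ structure of the offspring subtrees to get the one-step recursion $s_n=(P_1*s_{n-1})+\sigma^2 P_n^2$, and then iterating is exactly the standard many-to-two computation, and your bookkeeping of the indices and the identification $E[\xi(\xi-1)]=\sigma^2$ under $E\xi=1$ are right. The paper does not actually prove this lemma; it is stated with a citation to Proposition~28 of \cite{lz07}, so there is no in-paper proof to compare against, but your derivation is precisely the kind of argument one would expect behind such a citation.
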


\begin{proof}[Proof of Lemma \ref{lemma:collisions}]
Since the life length of the process is $O_p(N^\alpha)$, it
suffices to show that for any $t>0$,
\[
  \sum_{n\leq N^\alpha t}\sum_{x} \left\{\Gamma^{N}_{n} (x) + (A^{N}_{n} (x)-1)_{+}
    \right\}
    =o_{P} (N^{\alpha}).
\]

Consider first the number $\Gamma^{N}_{n} (x)$ of collisions at
site $x$ and time $n$.  For any susceptible individual $\eta$, a
collision occurs at $\eta$ if and only if there is some pair $\xi
,\zeta$ of infected individuals at neighboring sites that
simultaneously attempt to infect $\eta$. Therefore given the
evolution up to time $n$,  the conditional expectation of
$\Gamma^{N}_{n+1} (x)$ is bounded by $C(\sum_{e}X_n^N(x+e))^2/N $.
We want to show that
\begin{equation}\label{eq:collisionEst}
   \sum_{n\leq N^\alpha t}\sum_x  (X_n^N(x))^2/N    =o_p(N^{\alpha}).
\end{equation}
By the dominance of BRW over SIR epidemic, if we denote by
$U_n(x)$ the number of particles at $x$ at time $n$ of a BRW
started by one particle at the origin, and $x_i, i=1,2,\ldots,$
the position of the initial particles of our epidemic model,  then
$$
   E(X_n^N(x))^2 \leq \sum_i EU_n(x-x_i)^2 + 2\sum_{i\neq j} P_n(x-x_i)P_n(x-x_j),
$$
which, in dimension 3, by Lemma \ref{lemma:un2}, can be bounded by
$C\sum_i P_n(x-x_i)+ 2\sum_{i\neq j} P_n(x-x_i)P_n(x-x_j) $.
Therefore
\begin{equation}\label{eqn:xn2_3d}
  \aligned \sum_{n\leq N^\alpha t}\sum_x E(X_n^N(x))^2
  &\leq C\sum_{n\leq N^\alpha t}(CN^\alpha +
          CN^{2\alpha}/\sqrt{n}^3)\\
  &=O(N^{2\alpha}),\endaligned
\end{equation}
which is $o(N\times N^\alpha)$ since $\alpha \leq 2/3.$  In
dimension 2, again by Lemma \ref{lemma:un2}, $EU_n(x)^2\leq
C(1+\log n) P_n(x) $, therefore
\begin{equation}\label{eqn:xn2_2d}\aligned
     \sum_{n\leq N^\alpha t}\sum_x E(X_n^N(x))^2
     & \leq C\sum_{n\leq N^\alpha t}(C(1+\log n) N^\alpha +
             CN^{2\alpha}/n)\\
     &=O(N^{2\alpha}\log N),\endaligned
\end{equation}
which is also $o(N\times N^\alpha)$ since $\alpha \leq 1/2.$

Now consider the number $A^{N}_{n} (x)$ of errant infection
attempts at site $x$ and time $n$. In order that there be more
than one errant attempt, either (i) two or more infected
individuals must simultaneously try to infect a recovered
individual, or (ii) infected individuals must attempt to infect
more than one recovered individual. The number of occurences of
type (i) during the course of the epidemic is $o_{P}
(N^{\alpha})$, by the same argument that proved
\eqref{eq:collisionEst}. Thus, it suffices to bound the number of
errant attempts of type (ii). This is  bounded by  the number
$B^{N}_{n} (x)$ of pairs $\varrho ,\varrho '$ of recovered
individuals at site $x$ and time $n$ that are subject to
simultaneous infection attempts.  Clearly,
\[
    B^{N}_{n} (x)\leq \sum_{\xi ,\varrho } \sum_{\zeta ,\varrho' }
          Z_{\xi ,\varrho } Z_{\zeta ,\varrho '}
\]
where the sums are over all pairs $( (\xi ,\varrho ), (\zeta
,\varrho '))$ in which $\varrho ,\varrho '$ are recovered
individuals at site $x$ and time $n$ and $\xi ,\zeta$ are infected
individuals at neighboring sites, and $Z_{\xi ,\varrho}$ and
$Z_{\zeta ,\varrho '}$ are independent Bernoulli($1/((2d+1)N) $).
Hence,
\[
    E (B^{N}_{n+1} (x)\,|\, \mathcal{G}_{n})
      \leq C\left(\sum_{e} X_n^N(x+e)\right)^2
(R_n^N(x)/N)^2.
\]
By Theorem \ref{thm:lt} and the dominance of BRW over SIR
epidemic, for all $\epsilon>0,$ there exists $C>0$ such that with
probability $\geq 1-\epsilon$,
$$
   \max_x R_{N^\alpha t}^N(x)\leq C N^{\alpha(2-d/2)}.
$$
Note further that
$$
   \sum_{n\leq N^\alpha t}\sum_x E\left(\sum_{e} X_n^N(x+e)\right)^2
   \leq C\sum_{n\leq N^\alpha t}\sum_x E(X_n^N(x))^2,
$$
which, by (\ref{eqn:xn2_2d})  and (\ref{eqn:xn2_3d}), is bounded
by $C N^{2\alpha}\log N$ in dimension 2 and $C N^{2\alpha}$ in
dimension 3. Therefore, by enlarging $C$ if necessary we have that
with probability $\geq 1-2\epsilon$, the following holds:
$$
   \sum_{n\leq N^\alpha t}\sum_x\left(\sum_{e} X_n^N(x+e)\right)^2
          \left(R_n^N(x)/N\right)^2
   \leq CN^{2\alpha(2-d/2)} N^{2\alpha}\log N /N^2
   =o(N^\alpha).
$$
\end{proof}

\subsection{Convergence of likelihood ratios}\label{ssec:convergenceLR}
In view of Proposition \ref{proposition:modifiedSIR}, to prove
Theorem \ref{thm:SIR} it suffices to prove the corresponding
result for the modified \emph{SIR}  epidemic defined in \S
\ref{ssec:collisions}. For this, we shall analyze   likelihood
ratios. Denote by $Q^{N}$ the law of the modified \emph{SIR}
epidemic, and by $P^{N}$ the law of the branching envelope. Recall
(\hbox{cf.} the \emph{modified standard coupling}) that in the
modified \emph{SIR} process there can be at most one errant
infection attempt, and no collisions, at any site/time $x,t$.
Given the evolution of the process up to time $t-1$, infection
\emph{attempts} at site $x$ and time $t$ are made according to the
same law as are offspring in the branching envelope; the
conditional probability that one of the attempts is errant is
$\kappa_{N} (y)$ (see equation \eqref{eq:kappa}). Consequently,
the likelihood ratio $dQ^{N}/dP^{N}$ at the sample evolution
$X^{N}:=\{X^{N}_{t} (x) \}_{x,t}$ is
\begin{equation}\label{eq:LR}
    \frac{dQ^{N}}{dP^{N}}
    =\prod_{t\geq 1} \prod_{x\in \zz{Z}^{d}}
    \frac{p(y|\lambda)(1-\kappa_N(y)) + p(y+1|\lambda)\kappa_N(y+1)}{p(y|\lambda)},
\end{equation}
where
\begin{gather*}
    y=X_{t}^N(x),\\
    \lambda =\lambda^{N}_{t} (x)=
        \sum_{e} X^{N}_{t-1}(x+e)/(2d+1), \quad \text{and}\\
    p (k\,|\, \lambda)=\lambda^{k}e^{-\lambda}/k!
\end{gather*}
 By the same calculation as in \cite{lalley07}, equation (53), this
can be rewritten as
\begin{equation}\label{eq:LRrewritten}
    \frac{dQ^{N}}{dP^{N}}
  =(1+\varepsilon_N)\exp\left\{-\sum_{t}\sum_x
         \Delta_{t}^N(x)\varrho_{t}^N(x) - \frac{1}{2}\sum_{t}\sum_x
         \Delta_{t}^N(x)^2\varrho_{t}^N(x)^2\right\},
\end{equation}
where
$$\aligned \Delta_{t}^N(x)&:=(X_{t}^N(x)-\lambda_{t}^N(x))/N^\alpha,\\
          \varrho_{t}^N(x)&:=R_{t}^N(x)/N^{1-\alpha}; \quad \text{and}\\
          \varepsilon_N &=o_p(1) \quad \text{under} \; P^{N}.
\endaligned$$
That the error term $\varepsilon_N$ is $o_p(1)$  follows by an
argument nearly identical to  the proof of Lemma
\ref{lemma:collisions}.

Observe that under $P^{N}$, the increments (in $t$) of the first sum
in the exponential constitute a martingale difference
sequence. Furthermore, the quantities $\Delta^{N}_{t} (x)$ in equation
\eqref{eq:LRrewritten} are the atoms of the \emph{orthogonal
martingale measures} $M^{N}$ associated with the branching random
walks $X^{N}$. See \cite{lalley07} for the analogous representation in
the one-dimensional case, and \cite{Walsh86} for background on
stochastic integration against orthogonal martingale measures. The
martingale measures $M^{N}$ can be defined by their actions on test
functions $\psi \in C^{\infty}_{c} (\zz{R}^{d})$. Write $\ip{\mu
,\psi}$ for the integral of $\psi$ against a finite Borel measure
$\mu$ on $\zz{R}^{d}$, and $\mathcal{F}_{k}$ for the Feller-Watanabe
rescaling operator \eqref{eqn:feller_scl}; then
\[
    M^{N}_{t} (\psi)= \ip{\mathcal{F}_{N^{\alpha}}X^{N}_{N^\alpha t},\psi }
                -\ip{\mathcal{F}_{N^{\alpha}}X^{N}_{0},\psi}
                -\int_{0}^{t}
                \ip{\mathcal{F}_{N^{\alpha}}X^{N}_{N^\alpha s},A_{N^{\alpha
                }}\psi } \, ds
\]
where
$A_{k}$ is the difference operator
\[
    A_k\psi(x)
   =\left(\sum_{e}\psi(x+e/\sqrt{k})-(2d+1)\psi(x)\right)/
    \left[(2d+1)k^{-1}\right].
\]
The first sum in the exponential of equation \eqref{eq:LRrewritten}
can  be expressed as a stochastic integral against the orthogonal
martingale measure $M^{N}$:
\begin{equation}\label{eq:stochasticIntegralRep}
    \sum_{t\geq 1}\sum_{x\in \zz{Z}^{d}} \Delta_{t}^N(x)\varrho_{t}^N(x)
   =\int\int \theta^N(t,x) M^N\,(dt,dx),
\end{equation}
where
\[
    \theta^N(t,x)=R_{N^\alpha t}^N(\sqrt{N^\alpha}x)/N^{1-\alpha}.
\]

\begin{prop}\label{prop:omg_conv}
Let $X$ be the Dawson-Watanabe process with initial configuration
$\mu$ and variance parameter $\sigma^{2}$, and let $M (dt, dx)$ and
$L_{t} (x)$  be the associated orthogonal martingale measure and local
time density process. Then under $P^{N}$, given the hypotheses of
Theorem~\ref{thm:SIR}, as $N \rightarrow \infty$,
\begin{align*}
    (\mathcal{F}_{N^{\alpha}}X^{N},\theta^{N},M^{N})
    & \Longrightarrow (X,0,M) \quad \text{if} \; \; \alpha <1/(3-d/2)
        \quad \text{and}\\
    (\mathcal{F}_{N^{\alpha}}X^{N},\theta^{N},M^{N})
    & \Longrightarrow (X,L,M) \quad \text{if} \; \; \alpha =1/ (3-d/2) .
\end{align*}
\end{prop}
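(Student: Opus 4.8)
The plan is to prove the joint convergence coordinate by coordinate and then glue the three limits together by a tightness-plus-unique-identification argument.

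\emph{The first two coordinates.} By Watanabe's theorem and hypothesis \eqref{hyp:initialConfig}, $\mathcal{F}_{N^{\alpha}}X^{N}\Rightarrow X$. For $\theta^{N}$, write $k=N^{\alpha}$ and observe the algebraic identity
\[
   \theta^{N}(t,x)=\frac{R^{N}_{N^{\alpha}t}(\sqrt{N^{\alpha}}x)}{N^{1-\alpha}}
   =N^{\alpha(3-d/2)-1}\cdot\frac{R^{k}_{kt}(\sqrt{k}x)}{k^{2-d/2}}.
\]
The hypotheses \eqref{hyp:initialConfig}--\eqref{hyp:smoothness} of Theorem~\ref{thm:SIR} are exactly those of Theorem~\ref{thm:lt} with this $k$ (under the modified standard coupling the offspring law is Poisson$(1)$, \eqref{hyp:smoothness} is \eqref{ass:lt_smooth}, and the compact-support/joint-continuity requirements on $\mu$ are contained in \eqref{hyp:initialConfig}--\eqref{hyp:smoothness}), so Theorem~\ref{thm:lt} gives $R^{k}_{kt}(\sqrt{k}x)/k^{2-d/2}\Rightarrow L_{t}(x)$ in $D([0,\infty);C_b(\zz{R}^{d}))$. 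Since the exponent $\alpha(3-d/2)-1$ is negative when $\alpha<1/(3-d/2)$ and is $0$ when $\alpha=1/(3-d/2)$, we get $\theta^{N}\Rightarrow 0$ in the first case and $\theta^{N}\Rightarrow L$ in the second. Joint convergence of the pair $(\mathcal{F}_{N^{\alpha}}X^{N},\theta^{N})$ follows because both coordinates are measurable functionals of the single walk $X^{N}$, hence the pair is tight; for any subsequential limit $(X',\Theta')$, Watanabe's theorem identifies $X'$, while the argument of \S\ref{ssec:proofThLLT} (which uses Lemma~\ref{lemma:jtconv_sbm_lt} and Sugitani's theorem to pin down the \emph{unique} jointly continuous density of $\int_0^t X'_s\,ds$) forces $\Theta'$ to equal $L$ (resp.\ $0$), a deterministic functional of $X'$. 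Thus the pair converges to $(X,L)$ (resp.\ $(X,0)$).

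\emph{Bringing in $M^{N}$.} Under $P^{N}$, for each $\psi\in C_{c}^{\infty}(\zz{R}^{d})$ the process $t\mapsto M^{N}_{t}(\psi)$ is a martingale (its one-generation increments form the martingale-difference sequence noted after \eqref{eq:LRrewritten}), with predictable quadratic variation $\int_{0}^{t}\la\mathcal{F}_{N^{\alpha}}X^{N}_{N^{\alpha}s},\psi^{2}\ra\,ds+o_{P}(1)$ (offspring variance $1$, plus an $o(1)$ error from the discrete-Laplacian correction $A_{N^{\alpha}}\psi$), and with jumps $O_{P}(N^{-\alpha/2})$ because the Feller-rescaled one-generation increments have conditional variance of order $N^{-\alpha}$. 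These facts, together with the moment/quadratic-variation estimates for $M^{N}(\psi)$ already implicit in \S\ref{ssec:localLimitLaws}--\S\ref{ssec:proofThLLT} and the convergence $A_{N^{\alpha}}\psi\to\mathcal{A}\psi$ to the generator in the martingale problem for super-Brownian motion with variance parameter $\sigma^{2}$, give tightness of $M^{N}$ by the standard criteria for orthogonal martingale measures (\cite{Walsh86}, exactly as in the one-dimensional case of \cite{lalley07}). Combining with the previous step, the triple $(\mathcal{F}_{N^{\alpha}}X^{N},\theta^{N},M^{N})$ is tight, and for any subsequential limit $(X,\Theta,\mathcal{M})$ one checks that $\mathcal{M}_{t}(\psi)=\la X_{t},\psi\ra-\la X_{0},\psi\ra-\int_{0}^{t}\la X_{s},\mathcal{A}\psi\ra\,ds$ is a continuous martingale with quadratic variation $\int_{0}^{t}\la X_{s},\psi^{2}\ra\,ds$; this characterizes $\mathcal{M}$ as the orthogonal martingale measure $M$ attached to $X$, while $\Theta$ is pinned down as before. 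Since all three coordinates of the limit are uniquely determined, convergence of the triple follows.

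\emph{Main obstacle.} The delicate point is the last step: tightness of the martingale measures $M^{N}$ and, above all, checking that the limit $\mathcal{M}$ is a martingale \emph{with respect to the filtration generated jointly by} $(X,\Theta,\mathcal{M})$, and that the orthogonality relations $\la M(\psi),M(\phi)\ra_{t}=\int_{0}^{t}\la X_{s},\psi\phi\ra\,ds$ pass to the limit, so that $\mathcal{M}$ is genuinely the martingale measure of $X$ in the sense required for the likelihood ratio \eqref{eq:RN} rather than merely a process with the right one-dimensional marginals. This is handled by passing the martingale identity and the orthogonality identities to the limit simultaneously over a countable determining family of test functions, using the quadratic-variation and moment estimates above; once that is in place the identification of the whole triple is automatic.
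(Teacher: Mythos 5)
Your proposal is correct and follows essentially the same route as the paper: the paper's proof consists of exactly two observations, namely that the behavior of $\theta^{N}$ follows from Theorem~\ref{thm:lt} (your scaling identity $\theta^{N}=N^{\alpha(3-d/2)-1}\,R^{k}_{kt}(\sqrt{k}x)/k^{2-d/2}$ with $k=N^{\alpha}$ is precisely the computation behind this), and that the joint convergence of the triple then follows by the same tightness-plus-martingale-problem identification used in Proposition~4 of the one-dimensional paper, which is what your second and third paragraphs sketch. The only cosmetic point is that joint tightness of the pair comes from marginal tightness of each coordinate rather than from their being functionals of a common walk, but your identification of the subsequential limits is the correct (and the paper's) argument.
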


\begin{proof}
Given the weak convergence of the second margin $\theta^{N}$, the
joint convergence of the triple follows by the same argument as in
Proposition 4 of \cite{lalley07}.  The asymptotic behavior of the
processes $\theta^{N}$ follows from Theorem~\ref{thm:lt}.
\end{proof}

\begin{cor}\label{corollary:subThreshold}
If $\alpha <1/(3-d/2)$ then under $P^{N}$, as $N \rightarrow \infty$,
\begin{equation}\label{eq:lrConvsubThresh}
    \frac{dQ^{N}}{dP^{N}} \longrightarrow 1 \quad \text{in
    probability}
\end{equation}
provided that the hypotheses of
Theorem~\ref{thm:SIR} on the initial configurations are satisfied.
\end{cor}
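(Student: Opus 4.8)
The starting point is the representation \eqref{eq:LRrewritten}. Since $\varepsilon_N=o_p(1)$ under $P^N$, it suffices to show that the two sums in the exponential both vanish in probability,
$$
   S^{(1)}_N:=\sum_{t}\sum_x \Delta_t^N(x)\,\varrho_t^N(x)=o_p(1)
   \qquad\text{and}\qquad
   S^{(2)}_N:=\sum_{t}\sum_x \Delta_t^N(x)^2\,\varrho_t^N(x)^2=o_p(1),
$$
for then $dQ^N/dP^N=(1+o_p(1))\exp(-S^{(1)}_N-\tfrac12 S^{(2)}_N)\to 1$ in probability. The essential input is an a priori bound on the weights $\varrho_t^N(x)=R_t^N(x)/N^{1-\alpha}$. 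Under $P^N$ the occupation counts $R_t^N(x)$ are exactly those of the branching envelope, so Theorem~\ref{thm:lt} applied with $k=N^\alpha$ (its hypotheses being precisely \eqref{hyp:initialConfig}--\eqref{hyp:smoothness}) gives the convergence of $R^N_{N^\alpha t}(\sqrt{N^\alpha}x)/N^{\alpha(2-d/2)}$ in $D([0,\infty);C_b(\zz{R}^d))$; since the limit $L$ lies in $C(\zz{R}^{1+d})$ and has compact spatial support for each $t$, this yields, for every $T>0$,
$$
   \rho^*_N:=\frac1{N^{1-\alpha}}\max_{t\le N^\alpha T}\,\max_{x\in\zz{Z}^d}R_{N^\alpha t}^N(x)=O_p\!\big(N^{\alpha(3-d/2)-1}\big)=o_p(1),
$$
the last equality because $\alpha<1/(3-d/2)$ makes the exponent $\alpha(3-d/2)-1$ strictly negative. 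This is the same estimate already used in the proof of Lemma~\ref{lemma:collisions}.

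Because the branching envelope is extinct after $\tau_N=O_p(N^\alpha)$ generations (a standard height estimate for critical Galton--Watson processes), given $\epsilon>0$ we may fix $T=T(\epsilon)$ with $P(\tau_N>N^\alpha T)<\epsilon$ for all large $N$; on the complementary event every sum below runs only over $t\le N^\alpha T$ (up to a harmless unit shift), so it is enough to bound these truncated sums there. Recall that under $P^N$, conditionally on $\mathcal{F}_{t-1}$, the quantity $X_t^N(x)$ is Poisson with mean $\lambda_t^N(x)=\sum_e X^N_{t-1}(x+e)/(2d+1)$; hence $E[\Delta_t^N(x)^2\mid\mathcal{F}_{t-1}]=\lambda_t^N(x)/N^{2\alpha}$, $\sum_x\lambda_t^N(x)=|X^N_{t-1}|$, and $E|X^N_t|=|X^N_0|=O(N^\alpha)$ by \eqref{hyp:initialConfig}.

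For the quadratic sum, on $\{\tau_N\le N^\alpha T\}$ we have $S^{(2)}_N\le(\rho^*_N)^2\sum_{t\le N^\alpha T}\sum_x\Delta_t^N(x)^2$, and the truncated $\Delta^2$-mass has expectation $N^{-2\alpha}\sum_{t\le N^\alpha T}E|X^N_{t-1}|=N^{-2\alpha}\cdot N^\alpha T\cdot O(N^\alpha)=O(T)$, hence is $O_p(1)$ by Markov's inequality; therefore $S^{(2)}_N=o_p(1)\cdot O_p(1)=o_p(1)$. Note that the sum over \emph{all} $t$ would have infinite expectation (critical branching), which is exactly why the truncation at the $O_p(N^\alpha)$-scale extinction time is necessary; this localization is the only point in the argument that needs genuine care, the substantive ingredient being Theorem~\ref{thm:lt}, which supplies $\rho^*_N=o_p(1)$.

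For the linear sum, $M_t:=\sum_{s\le t}\sum_x\Delta_s^N(x)\varrho_s^N(x)$ is a $P^N$-martingale in $t$, since $\varrho_s^N(x)$ is $\mathcal{F}_{s-1}$-measurable and $E[\Delta_s^N(x)\mid\mathcal{F}_{s-1}]=0$ (as noted after \eqref{eq:stochasticIntegralRep}), and its predictable quadratic variation satisfies
$$
   \langle M\rangle_t=\frac1{N^{2\alpha}}\sum_{s\le t}\sum_x\lambda_s^N(x)\,\varrho_s^N(x)^2\le(\rho^*_N)^2\cdot\frac1{N^{2\alpha}}\sum_{s\le t}|X^N_{s-1}|,
$$
which on $\{\tau_N\le N^\alpha T\}$ is $(\rho^*_N)^2\cdot O_p(1)=o_p(1)$ by the computation above (or directly from Lemma~\ref{lemma:jtconv_sbm_lt}). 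A martingale whose terminal predictable quadratic variation tends to $0$ in probability itself tends to $0$ in probability --- via $P(\sup_t|M_t|>\delta)\le\eta/\delta^2+P(\langle M\rangle_\infty>\eta)$, obtained by stopping $M$ when $\langle M\rangle$ first exceeds $\eta$ and applying Doob's $L^2$ inequality --- so $S^{(1)}_N=o_p(1)$. (Equivalently, $S^{(1)}_N=\int\!\!\int\theta^N M^N(dt,dx)$ by \eqref{eq:stochasticIntegralRep}, and this is a continuous functional of $(\mathcal{F}_{N^\alpha}X^N,\theta^N,M^N)\Rightarrow(X,0,M)$ from Proposition~\ref{prop:omg_conv}, whose limit is $\int\!\!\int 0\,M(dt,dx)=0$.) Feeding the two bounds and $\varepsilon_N=o_p(1)$ into \eqref{eq:LRrewritten} gives $dQ^N/dP^N=(1+o_p(1))\exp(o_p(1))\to 1$ in probability, as claimed.
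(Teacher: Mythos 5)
Your proof is correct, and it rests on the same engine as the paper's: below threshold the rescaled occupation density $\theta^N=\max$-bounded by $\rho^*_N=O_p(N^{\alpha(3-d/2)-1})=o_p(1)$ (via Theorem~\ref{thm:lt} applied to the branching envelope), so both sums in the exponent of \eqref{eq:LRrewritten} vanish. The difference is in how much is made explicit. The paper's proof is two sentences: it disposes of the linear sum by invoking Proposition~\ref{prop:omg_conv} (joint weak convergence of $(\mathcal{F}_{N^\alpha}X^N,\theta^N,M^N)\Rightarrow(X,0,M)$ together with continuity of the stochastic integral $\int\!\!\int\theta^N\,dM^N$), and disposes of the quadratic sum by referring to the proof of equation (60) in \cite{lalley07}. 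You instead give self-contained elementary arguments for both: for the linear term, a discrete-time martingale whose predictable quadratic variation is $\leq(\rho^*_N)^2\cdot O_p(1)=o_p(1)$, killed by the Lenglart/Doob stopping argument (correctly noting the conditional orthogonality across sites needed for the bracket formula, which holds for the Poisson envelope); for the quadratic term, the crude bound $S^{(2)}_N\leq(\rho^*_N)^2\sum_{t,x}\Delta_t^N(x)^2$ together with the first-moment computation $E\sum_{t\leq N^\alpha T}\sum_x\Delta_t^N(x)^2=O(T)$ after localizing at the $O_p(N^\alpha)$ extinction time. Your explicit treatment of the truncation (the untruncated $\Delta^2$-mass has infinite expectation) is a point the paper leaves implicit, and your argument buys independence from the external reference at the cost of length; the paper's version buys brevity by reusing machinery already built for the critical case $\alpha=1/(3-d/2)$. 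One cosmetic slip: in your definition of $\rho^*_N$ the inner maximum should run over rescaled times $t\leq T$ (equivalently over raw times $s\leq N^\alpha T$), not $t\leq N^\alpha T$; since $R^N_\cdot(x)$ is nondecreasing in time this does not affect anything.
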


\begin{proof}
Proposition \ref{prop:omg_conv} implies that the sums
\eqref{eq:stochasticIntegralRep} converge to zero in probability as $N
\rightarrow \infty$. That the second sum in the likelihood ratio
\eqref{eq:LRrewritten}  also converges to zero in probability follows
by the same argument as in the one-dimensional case (see the proof of
equation (60) in \cite{lalley07}).
\end{proof}

\begin{proof}
[Proof of Theorem \ref{thm:SIR}] Corollary
\ref{corollary:subThreshold} implies that the modified \emph{SIR}
epidemics have the same scaling limit as their branching envelopes
when $\alpha <1/ (3-d/2)$. Thus, to complete the proof of
Theorem~\ref{thm:SIR}, it suffices to prove the assertion
\eqref{eq:SIR-convergence} when $\alpha =1/ (3-d/2)$. For this, it
suffices to show that the two sums in the exponential of
equation~\eqref{eq:LRrewritten}  converge to the corresponding
integrals in the exponential of equation~\eqref{eq:RN}. The
convergence of the first sum follows from
Proposition~\ref{prop:omg_conv} and the representation
\eqref{eq:stochasticIntegralRep}. By the same argument as in the
proof of Corollary~4 of \cite{lalley07},
\begin{equation}\label{eq:SIConvergence}
    \sum_n\sum_x \Delta_n^N(x)\varrho_n^N(x)
    =\int\int \theta^N(t,x) M^N(dt\,dx)\Rightarrow\int\int L_t(x)M(dt\,dx).
\end{equation}

 The convergence  of the second sum
\begin{equation}\label{eqn:QV_conv}
  A^{N}:=\sum_n\sum_x \Delta_n^N(x)^2\varrho_n^N(x)^2
  \Rightarrow \int \la X_t, (L_t)^2\ra\, dt
\end{equation}
follows by an argument similar to the proof of equation (60) in
\cite{lalley07}. The idea is that if one substitutes the conditional
expectation $\lambda^{N}_{n} (x)/N^{2\alpha }=E (\Delta_n^N(x)^2\,|\,
\mathcal{G}_{n-1})$ for the quantity $\Delta_n^N(x)^2$ in the sum
\eqref{eqn:QV_conv} then the modified sum converges; in particular,  by
Theorem~\ref{thm:lt} and Watanabe's theorem,
$$\aligned
   B^N
   &:=\sum_n\sum_x\lambda^{N}_{n} (x) /N^{2\alpha}\times [R_n^N(x)/N^{1-\alpha}]^2\\
   &=\frac{1}{N^\alpha}\sum_n\sum_x [\sum_e X_{n-1}^N(x+e)/(2d+1)]/N^{\alpha} \times
         [R_n^N(x)/N^{\alpha(2-d/2)}]^2\\
   & \Longrightarrow  \int \la X_t, (L_t)^2\ra \,dt,
\endaligned$$
where the second equation holds because $\alpha =1/ (3-d/2)$.
Therefore, it suffices to show that replacing $\Delta_n^N(x)^2$ by
its conditional expectation has an asymptotically negligible
effect on the sum, that is,
$$
  A^N-B^N=o_p(1).
$$
By a simple variance calculation (see \cite{lalley07} for the
one-dimensional case), this reduces to
proving that
\begin{equation}\label{eq:finalObj}
    \sum_n \sum_x (\lambda_n^N(x))^2/N^{4\alpha}\times
   [R_n^N(x)/N^{\alpha(2-d/2)}]^4
= o_p(1).
\end{equation}
In fact, for all $\epsilon>0,$ there exists $C>0$ such that with
probability $\geq 1-\epsilon$,
$$
   \max_x R_{N^\alpha t}^N(x)\leq C N^{\alpha(2-d/2)}.
$$
Note further that
$$
    \sum_{n\leq N^\alpha t}\sum_x E[\sum_{e} X_n^N(x+e)]^2
    \leq C\sum_{n\leq N^\alpha t}\sum_x E(X_n^N(x))^2,
$$
which, by (\ref{eqn:xn2_2d})  and (\ref{eqn:xn2_3d}),  is bounded
by $C N^{2\alpha}\log N$ in dimension 2 and $C N^{2\alpha}$ in
dimension~3. Therefore, by enlarging $C$ if necessary we have that
with probability $\geq 1-2\epsilon$, the following holds:
$$
   \sum_{n\leq N^\alpha t}\sum_x(\lambda_n^N(x))^2/N^{4\alpha}
      \times [R_n^N(x)/N^{\alpha(2-d/2)}]^4
   \leq C N^{2\alpha}\log N /N^{4\alpha}
   =o(1).
$$
\end{proof}

\section{Appendix: Proofs of Lemmas \ref{lemma:lltA}-\ref{lemma:greenB}}\label{sec:app}

\subsection{Proofs of Lemmas
\ref{lemma:lltA}--\ref{lemma:lclt_diff}}\label{ssec:lltA}  The
strategy is to consider the regions $|x|\leq (2L n \log n)^{1/2}$
and $|x|\geq (L n \log n)^{1/2}$ separately. We begin with the
unbounded region. By Hoeffding's inequality, since the increments
of $S_{n}$ are no larger than~$1$ in modulus,
$$
     P_n(x)\leq P(|S_n|\geq |x|)\leq 2d\exp(-|x|^2/(2dn)).
$$
Now for $0<\beta < 1/\sqrt{d}$ and $L=L (\beta)$ sufficiently
large,
\[
    \exp(-|x|^2/(2dn)) \leq \exp(-\beta^2 |x|^2/(2n))/n^{(d+1)/2},
    \quad \forall \; |x|\geq \sqrt{L n \log n}.
\]
Thus,
\begin{equation}\label{eqn:pn_lgdv}
   P_n(x)\leq C \exp(-\beta^2 |x|^2/(2n))/n^{(d+1)/2}=C\phi_n(\beta
   x)/\sqrt{n}, \quad \forall\, |x|\geq \sqrt{L n \log n},
\end{equation}
and
\begin{align}\label{eqn:p_diff_aty}
     |P_n(x)-P_n(y)|&\leq C\Phi_n(\beta x,\beta y)/\sqrt{n} \\
\notag               &\leq  C\left(\frac{|x-y|}{\sqrt{n}}\wedge
                 1\right)
                 \Phi_n(\beta x,\beta  y), \quad
                 \forall \, |x|,|y|\geq \sqrt{L n \log n}.
\end{align}
This proves inequalities \eqref{eqn:lclt_bd} and
\eqref{eqn:p_diff} for $x$ and $y$ outside the ball of radius $(L
n \log n)^{1/2}$.

To deal with the region $|x|\leq
 (2L n \log n)^{1/2}$ we shall use the following crude estimate, valid
for all points $x\in \zz{Z}^{d}$ (Theorem 2.3.5 in
\cite{lawler07}):
$$
     |P_n(x)- \sigma^{-d}\phi_{n} (x/\sigma ) |\leq C/(\sqrt{n}^d\cdot n).
$$
For $\beta =\beta (L)>0$ sufficiently small,
\[
    \phi_n(\beta x)\geq 1/(\sqrt{n}\cdot n^{d/2}), \quad
             \forall  \; |x|\leq \sqrt{2L n \log n};
\]
consequently,
\begin{equation}\label{eqn:ppbar_diff}
     |P_n(x)- \sigma^{-d}\phi_{n} (x/\sigma )|\leq C \phi_n(\beta
     x)/\sqrt{n},
     \quad  \forall\, |x|\leq \sqrt{2L n \log n}.
\end{equation}
This obviously implies  \eqref{eqn:lclt_bd}  for $x$ in the region
$|x|\leq  (2L n \log n)^{1/2}$, and hence, together with the
argument of the preceding paragraph, completes the proof of
\eqref{eqn:lclt_bd}.

Similar arguments can be used to establish inequality
\eqref{eqn:p_diff} for points $x$ and $y$ in the ball of radius $
(L n \log n)^{1/2}$ centered at the origin. First, it is easily
seen that for sufficiently small $\beta >0$,
\[
    |\phi_{n} (x)-\phi_{n} (y)| \leq
     C\left((|x-y|/\sqrt{n})\wedge 1\right)  \Phi_n(\beta  x,\beta
     y),
     \quad \forall  \;x,y \in \zz{R}^{d}
\]
Hence, by \eqref{eqn:ppbar_diff}, \eqref{eqn:p_diff} holds for for
$x$ and $y$ in the ball of radius $ (L n \log n)^{1/2}$.
Therefore, since the choice of $L$ is arbitrary, to complete the
proof of (\ref{eqn:p_diff}) it suffices to consider the case where
$|x|\leq (L n \log n)^{1/2} $ and $|y|\geq (2L n \log n)^{1/2}$.
In this case, choose a point $z$ in the annulus $|z|\in ( (L n
\log n)^{1/2}, (2L n \log n)^{1/2})$ such that $|x-z| + |z-y|\leq
2|x-y|$. Using the fact that (\ref{eqn:p_diff}) holds for each of
the pairs $x,z$ and $z,y$, we have
\[
     |P_n(x)-P_n(z)|\leq C\left((|x-z|/\sqrt{n})\wedge 1\right)
    \Phi_n(\beta x,\beta z)
    \leq 2C \left((|x-z|/\sqrt{n})\wedge 1\right) \Phi_n(\beta
    x,\beta y)
\]
and
\[
     |P_n(z)-P_n(y)|\leq C\left(|z-y|/\sqrt{n}\wedge 1\right)
     \Phi_n(\beta z,\beta y)
     \leq C\left((|z-y|/\sqrt{n})\wedge 1\right)
     \Phi_n(\beta x,\beta y).
\]
Consequently,
$$
     |P_n(x)-P_n(y)|
     \leq
      C \left((|x-y|/\sqrt{n})\wedge 1\right)\cdot\Phi_n(\beta
      x,\beta y).
$$
This completes the proof of  \eqref{eqn:p_diff}. Inequality
\eqref{p_diff_alpha} obviously follows from \eqref{eqn:p_diff}.

\bigskip \noindent
\textbf{Proof of \eqref{eqn:green_bd} when $d=3$.} The following
argument works for all $d\geq 3$. Firstly, $\sum_{n\leq kT}
\phi_n(\beta x)$  is bounded by  $\sum_{n=1}^\infty \phi_n(\beta
x)$. This is a decreasing function in $|x|$; moreover, by Lemma
4.3.2 in \cite{lawler07}, it equals $C_1/|x|^{d-2} +
O(1/|x|^{d+2})$ as $|x|\to \infty$ for some $C_1>0$. Secondly, for
all~$k$ sufficiently large and all $|x|\leq A\sqrt{k}$,
$$
  \sum_{n\leq kT}   P_n(x)\geq \sum_{kT/2\leq n\leq kT} P_n(x)\geq \sum_{kT/2\leq n\leq kT}
   n^{-d/2} C\geq Ck^{1-d/2};
$$
note further that
$$
 \sum_{n> kT}   P_n(x)\leq C\sum_{n> kT} n^{-d/2}\leq
 Ck^{1-d/2};
$$
therefore there exists $\delta>0$ such that all $k$ sufficiently
large and all $|x|\leq A\sqrt{k}$,
$$\sum_{n\leq kT} P_n(x) \geq \delta \sum_{n=1}^\infty P_n(x).$$
For the nearest neighbor random walk,  $\sum_{n=1}^\infty P_n(x)$
equals $C_2/|x|^{d-2} + O(1/|x|^{d+2})$ as $|x|\to~\infty$ for
some $C_2>0$. Relation \eqref{eqn:green_bd} follows.

\bigskip \noindent
\textbf{Proof of \eqref{eqn:green_bd} when $d=2$.} In this case,
one can deduce from the proof of Theorem 4.4.3 in \cite{lawler07}
that there exist $C_i>0,\; i=1,2,3,4$ such that for all $|x|\leq
A\sqrt{k}$,
$$
  \sum_{n\leq kT } \phi_n(\beta x)\asymp C_1 + C_2 \log(kT/|x|^2),
$$
and
$$
  \sum_{n\leq kT } P_n(x)\asymp C_3 + C_4 \log(kT/|x|^2).
$$
\eqref{eqn:green_bd} follows.

\bigskip
To complete the proof of Lemma \ref{lemma:lltA}, it remains to
prove inequality \eqref{eqn:dis_conv}.

\bigskip \noindent
\textbf{Proof of \eqref{eqn:dis_conv}.} By \eqref{eqn:lclt_bd}, it
suffices to show that there exists $C>0$ such that for all
$x\in\zz{Z}^d$ and all $i,j\in\zz{N}$,
\begin{equation}\label{eqn:conv_1}
  \sum_y\phi_i(\beta y)\phi_j(\beta (x-y))\leq C\phi_{i+j}(\beta
  x/2).
\end{equation}
For all $y\in \zz{Z}^d$, Let $Q_y$ be the cube centered at $y$
with length 1, and define
$$
  \tilde{\phi}_i(y)=\int_{z\in Q(y)} (2\pi
  i/\beta^2)^{-d/2}\exp(-\beta^2|z|^2/(2i))dz.
$$
Then there exists $C>0$ such that for all $i$ and all $x$,
$$
  \phi_i(\beta x)\leq C\tilde{\phi}_i( x).
$$
Therefore to show \eqref{eqn:conv_1}, it suffices to show that
there exists $C>0$ such that for all $x\in\zz{Z}^d$ and all
$i,j\in\zz{N}$,
\begin{equation}\label{eqn:conv_2}
  \sum_y\tilde{\phi_i}(y)\tilde{\phi}_j(x-y)\leq C\phi_{i+j}(\beta
  x/2).
\end{equation}
Note $(\tilde{\phi}_i(\cdot))$ is the probability mass function of
the random variable $[\Lambda_i]$, where $\Lambda_i\sim
N(0,i/\beta\cdot I_d)$, and for any $z\in\zz{R}^d\backslash \cup_y
\partial Q(y)$, $[z]$ is the unique $y$ such that $z\in Q(y)$
($ \Lambda_i$ takes values on $\cup_y
\partial Q(y)$ with probability 0, so $[\Lambda_i]$ is well
defined \hbox{a.s.}). Hence
$\sum_y\tilde{\phi_i}(y)\tilde{\phi}_j(\cdot-y)$ is the
probability mass function of $[\Lambda_i]+[\Lambda_j]$ with
$\Lambda_i$ and $\Lambda_j$ being independent. Since
$[\Lambda_i]+[\Lambda_j]$ differs from $\Lambda_i+\Lambda_j$ by at
most 2,
$$
   \sum_y\tilde{\phi_i}(y)\tilde{\phi}_j(x-y)\leq \int_{|z-x|\leq
   2} (2\pi  (i+j)/\beta^2)^{-d/2}\exp(-\beta^2|z|^2/(2(i+j)))dz.
$$
It is easy to see that the last term can be bounded by
$C\phi_{i+j}(\beta   x/2)$ for some $C$ independent of $i,j$ and
$x$.

 \qed

\subsection{Proof of Lemma \ref{lemma:greenA}}\label{ssec:greenA}
By the monotonicity of $\phi_{n} (x)$ in $|x|$, for all integers
$m,l\geq 1$ and all $x, y\in\zz{R}^d$  we have
$$
\aligned
  \phi_m(x)\phi_l(y)
  &\leq \phi_m(x)\phi_l(x)+\phi_m(y)\phi_l(y)\\
  &\leq C(ml)^{-d/4}(\phi_{ml/(m+l)}(x) +\phi_{ml/(m+l)}(y)).
\endaligned
$$
Now note that for any $t>0$ and any $x$,
$$
 \phi_t(x)=(2\pi t)^{-d/2}\exp(-|x|^2/(2t))\leq 2^{d/2}\cdot \phi_{2t}(x),
$$
and when $t\geq 1$,
$$
 \phi_t(x)\leq \phi_{\lceil t \rceil}(x)\cdot (\lceil t \rceil/t)^{d/2}\leq 2^{d/2} \phi_{\lceil t \rceil}(x),
$$
where $\lceil t \rceil$ stands for the smallest integer bigger
than or equal to $t$.  Further note that when $m,l\geq 1$, $
ml/(m+l)\geq 1/2$. Using the two inequalities above we then get
\begin{equation}\label{eqn:phi_t}
  \phi_m(x)\phi_l(y)
  \leq C (ml)^{-d/4} \left(\phi_{\lceil 2ml/(m+l) \rceil}(x)+\phi_{\lceil 2ml/(m+l)
  \rceil}(y)\right).
\end{equation}
and
$$
\aligned
    \Phi_m(x,y)\Phi_l(u,v)
    &=\phi_m(x)\phi_l(u)+\phi_m(x)\phi_l(v)+\phi_m(y)\phi_l(u)+\phi_m(y)\phi_l(v)\\
    &\leq C (ml)^{-d/4} \left(\Phi_{\lceil 2ml/(m+l) \rceil}(x,y)+\Phi_{\lceil 2ml/(m+l)
  \rceil}(u,v)\right)
\endaligned
$$
Therefore for all $h_1,h_2\geq 1$,
\begin{equation}\label{eqn:digamma_prod}
\aligned
   &F_{n,h_1}(x,y;\beta)F_{n,h_2}(x,y;\beta)\\
   =&\sum_{|\rho_1|<h_1}\sum_{|\rho_2|<h_2}\sum_{m<n}  \sum_{l<n}
      (ml)^{-\gamma/2}\Phi_m(\beta(x+\rho_1),\beta(y+\rho_1))\cdot\Phi_l(\beta(x+\rho_2),\beta(y+\rho_2))      \\
   \leq& \sum_{|\rho_1|<h_1}\sum_{|\rho_2|<h_2}\sum_{m<n}\sum_{l<n} C
   (ml)^{-d/4-\gamma/2}\\
   &\quad \cdot  \left\{\Phi_{\lceil 2ml/(m+l) \rceil}(\beta(x+\rho_1),\beta(y+\rho_1))+\Phi_{\lceil 2ml/(m+l)
  \rceil}(\beta(x+\rho_2),\beta(y+\rho_2))\right\}\\
  \leq& \sum_{|\rho|<h_1+h_2-1}\cdot\sum_{m<n}\sum_{l<n}C (ml)^{-d/4-\gamma/2}\Phi_{\lceil 2ml/(m+l)
  \rceil}(\beta(x+\rho),\beta(y+\rho)).
\endaligned
\end{equation}
Observe that when $m,l\in [1,n)$, $ ml/(m+l)\in [1/2,n/2)$, hence
the last term is bounded by
$$\aligned
  &C   n^{2-(d+2\gamma)/2}\sum_{|\rho|<h_1+h_2-1}\cdot\sum_{l<n}\Phi_l(\beta(x+\rho),\beta(y+\rho))\\
  \leq& C  n^{2-(d+\gamma)/2}\sum_{|\rho|<h_1+h_2-1}\cdot
    \sum_{l<n}l^{-\gamma/2}\Phi_l(\beta(x+\rho),\beta(y+\rho))\\
  =&C  n^{2-(d+\gamma)/2}F_{n,h_1+h_2-1}(x,y;\beta),
\endaligned
$$
\hbox{i.e.}, \eqref{eqn:green_indc} holds.

We now prove \eqref{eqn:conv}. By \eqref{eqn:digamma_prod},
$$\aligned
   &\sum_{i<n}\sum_z P_i(z)
      \cdot\left[F_{n-i,h_1}(x-z,y-z;\beta)F_{n-i,h_2}(x-z,y-z;\beta)\right]\\
   &\leq \sum_{i<n}\sum_z
   P_i(z)\sum_{|\rho|<h_1+h_2-1}\sum_{m<n-i}\sum_{l<n-i}\\
   &\quad\cdot      C (ml)^{-d/4-\gamma/2}\Phi_{\lceil 2ml/(m+l)
      \rceil}(\beta(x-z+\rho),\beta(y-z+\rho))\\
   &\leq \sum_{m<n} \sum_{l<n}  C (ml)^{-d/4-\gamma/2}\!\!\sum_{|\rho|<h_1+h_2-1}\cdot \sum_{i<{\rm
   min}(n-m,n-l)}\\
   &\quad\cdot\sum_z P_i(z) \Phi_{\lceil 2ml/(m+l)
      \rceil}(\beta(x-z+\rho),\beta(y-z+\rho)).
\endaligned$$
Using relation \eqref{eqn:dis_conv} and noting that $\lceil
2ml/(m+l)\rceil \leq {\rm max}(m,l)$, we can further bound the
last term by
$$\aligned
   &Cn^{2-(d+2\gamma)/2}\sum_{|\rho|<h_1+h_2-1}\cdot\sum_{i<n}\Phi_i(\beta(x+\rho)/2,\beta(y+\rho)/2)\\
   &\leq C n^{2-(d+\gamma)/2}F_{n,h_1+h_2-1}(x,y;\beta/2).
\endaligned
$$

\qed

\subsection{Proof of Lemma \ref{lemma:greenB}}\label{ssec:greenB}
For all $h_1,h_2\geq 1$, all $x\in \zz{Z}^d$ and all integers
$m,n\geq 1$, by \eqref{eqn:phi_t},
\begin{equation}\label{eqn:gimel_prod}\aligned
  &J_{m,n,h_1}(x;\beta)J_{m,n,h_2}(x;\beta)\\
  &=\sum_{|\rho_1|<h_1}\sum_{|\rho_2|<h_2}\cdot\sum_{m\leq l_1,l_2<m+n}\phi_{l_1}(\beta(x+\rho_1))
    \phi_{l_2}(\beta(x+\rho_2))\\
  &\leq C\sum_{|\rho_1|<h_1}\sum_{|\rho_2|<h_2}\cdot\sum_{m\leq   l_1,l_2<m+n}\\
  &\quad\cdot (l_1 l_2)^{-d/4}
     \left\{\phi_{\lceil 2l_1l_2/(l_1+l_2) \rceil}(\beta(x+\rho_1))+\phi_{\lceil 2l_1l_2/(l_1+l_2)
     \rceil}(\beta(x+\rho_2))\right\}\\
  &\leq C\sum_{|\rho|<h_1+h_2-1}\cdot\sum_{m\leq l_1,l_2<m+n} (l_1 l_2)^{-d/4}\phi_{\lceil 2l_1l_2/(l_1+l_2)
  \rceil}(\beta(x+\rho)).
\endaligned\end{equation}
Note that when $l_1,l_2\in [m,m+n)$, $ \lceil
2l_1l_2/(l_1+l_2)\rceil\in [m,m+n)$, hence the last term is
bounded by
$$
  C   n^{2-d/2}\sum_{|\rho|<h_1+h_2-1}\cdot\sum_{m\leq
  l<m+n}\phi_l(\beta(x+\rho))=C   n^{2-d/2}J_{m,n,h_1+h_2-1}(x;\beta),
$$
\hbox{i.e.}, \eqref{eqn:green_indc_B} holds.

We now prove \eqref{eqn:conv_B}. By \eqref{eqn:gimel_prod},
$$\aligned
   &\sum_{i<n}\sum_z P_i(z)
      \cdot\left(J_{m,n-i,h_1}(x-z;\beta)J_{m,n-i,h_2}(x-z;\beta)\right)\\
   &\leq \sum_{i<n}\sum_z P_i(z)\sum_{|\rho|<h_1+h_2-1}\cdot\sum_{m\leq l_1,l_2< m+n-i}
      C (l_1l_2)^{-d/4}\phi_{\lceil 2l_1l_2/(l_1+l_2)
      \rceil}(\beta(x-z+\rho))\\
   &\leq \sum_{m\leq l_1,l_2<m+n} C (l_1l_2)^{-d/4}\!\!\!\sum_{|\rho|<h_1+h_2-1} \cdot\sum_{i<{\rm
   min}(m+n-l_1,m+n-l_2)} P_i(z) \phi_{\lceil 2l_1l_2/(l_1+l_2)
      \rceil}(\beta(x-z+\rho)).
\endaligned$$
Using relation \eqref{eqn:dis_conv} and noting that $\lceil
2l_1l_2/(l_1+l_2)\rceil \in [{\rm min}(l_1,l_2), {\rm
max}(l_1,l_2)]$, we can further bound the last term by
$$
    Cn^{2-d/2}\sum_{|\rho|<h_1+h_2-1}\cdot\sum_{m\leq i<m+n} \phi_i(\beta(x+\rho)/2)
    = Cn^{2-d/2}J_{m,n,h_1+h_2-1}(x;\beta/2).
$$

\qed

\bibliographystyle{asa}
\bibliography{mainbib}

\end{document}